\g@addto@macro\normalsize{%
  \setlength\abovedisplayskip{8pt plus 3pt minus 3pt}
  \setlength\belowdisplayskip{8pt plus 3pt minus 3pt}
  \setlength\abovedisplayshortskip{6pt plus 3pt minus 2pt}
  \setlength\belowdisplayshortskip{6pt plus 3pt minus 2pt}
}
\definecolor{orange}{rgb}{0.9,0.45,0.1}
\definecolor{purple}{rgb}{0.85,0.33,0.83}
\numberwithin{equation}{section}
\def\nfrac#1#2{{\textstyle\frac{#1}{#2}}}
\let\leq\leqslant
\let\geq\geqslant
\newtheorem{thm}{Theorem}[section]
\newtheorem{cor}[thm]{Corollary}
\newtheorem{lemma}[thm]{Lemma}
\theoremstyle{definition}
\newtheorem{remark}[thm]{Remark}
\def\abs#1{\lvert#1\rvert} \let\card=\abs
\def\Abs#1{\bigl\lvert#1\bigr\rvert} 
\def\norm#1{\lVert#1\rVert}
\def\lambdamin{\lambda_{\mathrm{min}}}
\def\alphamax{\alpha_{\mathrm{max}}}
\def\Deltamax{\varDelta_{\mathrm{max}}}
\def\dfrac#1#2{\lower0.15ex\hbox{\large$\textstyle\frac{#1}{#2}$}}
\def\({\bigl(}
\def\){\bigr)}
\def\st{\mathrel{|}}
\def\St{\bigm|}
\def\One{\boldsymbol{1}}
\let\eps=\varepsilon
\def\P{\boldsymbol{P}}
\def\X{\boldsymbol{X}}
\def\Y{\boldsymbol{Y}}
\def\Z{\boldsymbol{Z}}
\def\calF{\mathcal{F}}
\def\calG{\mathcal{G}}
\def\calT{\mathcal{T}}
\def\F{\boldsymbol{\calF}}
\def\dvec{\boldsymbol{d}}
\def\hvec{\boldsymbol{h}}
\def\vvec{\boldsymbol{v}}
\def\xvec{\boldsymbol{x}}
\def\yvec{\boldsymbol{y}}
\def\zvec{\boldsymbol{z}}
\def\bad{{\mathrm{bad}}}
\def\good{{\mathrm{good}}}
\def\Gd{\mathcal{G}_{\dvec}}
\def\makesizes#1#2#3#4#5#6#7{%
\newcommand{#4}[1]{#2#1##1#3}
\newcommand{#5}[1]{#2\bigl#1##1\bigr#3}
\newcommand{#6}[1]{#2\Bigl#1##1\Bigr#3}
\newcommand{#7}[1]{#2\biggl#1##1\biggr#3}}
\def\E{\operatorname{\mathbb{E}}}
\def\Var{\operatorname{Var}}
\def\Cov{\operatorname{Cov}}
\def\Diam{\operatorname{diam}}
\def\esssup{\operatorname{ess\,sup}\limits}
\def\Prob{\mathbb{P}}     
\def\V{\operatorname{\mathbb{V\!}}}
\def\Reals{{\mathbb{R}}}
\def\Complexes{{\mathbb{C}}}
\def\Integers{{\mathbb{Z}}}
\def\Aut{\operatorname{Aut}}
\def\Tilde#1{\mkern2mu\widetilde{\mkern-2mu#1}}
\def\nicebreak{\vskip 0pt plus 50pt\penalty-300\vskip 0pt plus -50pt }
\let\originalleft\left
\let\originalright\right
\renewcommand{\left}{\mathopen{}\mathclose\bgroup\originalleft}
\renewcommand{\right}{\aftergroup\egroup\originalright}
\begin{document}

\title{Subgraph counts for dense random graphs\\ with specified degrees\thanks{Research supported by Australian Research Council Discovery Project DP190100977.}}

\author{
Catherine Greenhill\\
\small School of Mathematics and Statistics\\[-0.8ex]
\small UNSW Sydney\\[-0.8ex]
\small NSW 2052, Australia\\[-0.3ex]
\small \texttt{c.greenhill@unsw.edu.au}\\
\and
Mikhail Isaev\\
\small School of Mathematics \\[-0.8ex]
\small Monash University\\[-0.8ex]
\small VIC 3800, Australia\\[-0.3ex]
\small\texttt{mikhail.isaev@monash.edu}
\and
Brendan D. McKay\\
\small Research School of Computer Science\\[-0.8ex]
\small Australian National University\\[-0.8ex]
\small ACT 2601, Australia\\[-0.3ex]
\small\tt brendan.mckay@anu.edu.au
}

\date{11 November 2020}

\maketitle

\begin{abstract}
We prove two estimates for the expectation of the
exponential of a complex function of a random permutation or subset.
Using this theory,  we find asymptotic expressions for the expected number of copies and induced copies 
of a given graph in a uniformly
random graph with degree sequence $(d_1,\ldots,d_n)$ as $n \rightarrow 
\infty$. 
We also determine the expected number of spanning trees in this model.
The range of degrees covered includes $d_j = \lambda n + O(n^{1/2+\eps})$ 
for some $\lambda$ bounded away from~$0$ and~$1$.
\end{abstract}

\nicebreak

\section{Introduction }\label{S:intro}

For infinitely many natural numbers~$n$, consider 
 vectors $\dvec(n) = (d_1(n),\ldots, d_n(n)) \in \{0,\ldots, n-1\}^n$.
Since we consider asymptotics with respect to $n\to\infty$, we
will generally assume that $n$ is sufficiently large and
just write $\dvec$ in place of $\dvec(n)$, and similarly for other variables.
Everywhere in the paper we assume that 
\[
	\dvec(n) \text{ is a graphical degree sequence;}
\]
that is, there exists a graph  on the vertex set $\{1,\ldots, n\}$  such that 
  $d_j(n)$ is  the degree of vertex $j$, for $j=1,\ldots, n$.
Let $\Gd$ denote the uniform random graph
model of simple graphs on the vertex set $\{1,\ldots, n\}$
with degree sequence~$\dvec$.
By $G\sim\Gd$ we mean that $G$ is a random graph from~$\Gd$.

We study the occurrence of  patterns in  $G\sim\Gd$  such as subgraphs  or  induced  subgraphs  isomorphic to a given graph. Using this theory, we find asymptotic expressions for the expected number of some more general structures,
namely spanning trees and $r$-factors. 
Our aim is to provide formulae that cover sufficiently large and general structures so they could be subsequently used
to estimate moments and derive tail bounds for the limiting distribution of the 
corresponding  random variables.

For any vector $\vvec=(v_1,\ldots, v_t)$, let
\[ \|\vvec\| = \max_{j=1\ldots t} |v_j|\]
denote the infinity norm of $\vvec$.
We also use this norm for functions with finite domain.
We will use the following parameters that depend only on~$\dvec$:
\begin{equation}\label{R-lambda-delta}
  \begin{aligned}
  d &= \frac1n\sum_{j=1}^n d_j,
    & \lambda &= \frac{d}{n-1}, \\
  R &= \frac{1}{n}\sum_{j=1}^n\, (d_j-d)^2,
& \quad
   \delta & = \| (d_1-d,\ldots, d_n-d)\| .
   \end{aligned}
\end{equation}
We  consider  the range of $\dvec$ which satisfy the following assumptions
for some constant $\eta\in(0,\frac12)$ and some constant $\eps>0$
which is sufficiently small depending on $\eta$:
\begin{equation}\label{curlyA}
 \delta \leq n^{1/2 + \eps}\quad\text{and}\quad
 \min\{ \lambda,1-\lambda \} \geq \frac{1}{6\eta^2\log n}. 
\end{equation}
The set of graphs with degrees  $\dvec$ satisfying~\eqref{curlyA} is non-empty for sufficiently large~$n$.  This is implied by the enumeration
results in~\cite{MW90} and also follows directly from the Erd\H{o}s-Gallai characterisation of graphical degree sequences~\cite{EG1960}.
The random graph model~$\Gd$ is thus well-defined.

Let $\calG(n,p)$ denote the binomial model of random graph, in which each edge is present independently with probability~$p$.  
Note that the degree sequence of a random graph from $\calG(n,p)$
satisfies $\delta\leq n^{1/2+\eps}$ with high probability for any
$p=p(n)$.
Our results show that counts  of small subgraphs in $\Gd$ closely match those in
$\calG(n,\lambda)$, but for larger subgraphs the two models diverge and correction factors that we will determine are required.

\medskip
Let~$G$ and~$H$ be graphs with the same vertex set $\{1,2\ldots,n\}$.
The \textit{number of copies of~$H$ in~$G$} is
the number of spanning subgraphs of~$G$ that are isomorphic to~$H$.
For given $\dvec,H$, the random variable $N_{\dvec}(H)$ is the
number of copies of $H$ in  $G$ when  $G$ is  taken at random
from~$\Gd$.
The first problem we consider is the expectation~$\E N_{\dvec}(H)$.
 If $\hvec= (h_1,\ldots, h_n)$
is the degree sequence of $H$ then we define
\begin{equation}
m = \dfrac{1}{2}\,\sum_{j=1}^n h_j, \qquad  \qquad 
\label{Mt} \mu_t = \dfrac{1}{n}\,\sum_{j=1}^n h_j^t  \qquad 
    \text{for $t \geq 1$}. 
\end{equation}
Note that $ m = n\mu_1/2$ is the number of edges of $H$.
Define $\Aut(H)$ to be the automorphism group of $H$,
which is the set of permutations of the vertex set $\{1,\ldots,n\}$ that preserve the edge set of $H$.

\begin{thm}\label{subcount}
For any constant $\eta\in(0,\frac12)$  there is some
$\eps_1(\eta) > 0$ such that the following holds for every fixed 
$\eps \in (0,\eps_1(\eta)]$.
Let $\dvec$ be a degree sequence which satisfies~\eqref{curlyA}.
Suppose that
$H$ is a graph on vertex set $\{ 1,\ldots, n\}$ with $m$
edges and degree sequence $\hvec$
such that 
\begin{equation}
\label{assumption}
m \leq  n^{1+2\eps}, \qquad 
\frac{\delta^3 \mu_3 }{\lambda^3 n^2} \leq n^{-1/2+\eta} \qquad 
\text{and}\qquad
\norm\hvec \leq n^{1/2 + \eps}.
\end{equation}
  Then, as $n \rightarrow \infty$,
  \begin{align*}
       \E N_{\dvec}(H) = \frac{n!}{\card{\Aut(H)}}\, \lambda^{m} \exp\biggl(
    \frac{1-\lambda}{4\lambda}&(\mu_1^2 + 2\mu_1 -2\mu_2)     
    -\frac{R}{2 \lambda^2 n} (\mu_1^2 + \mu_1 - \mu_2) 
    \\  &{}-\frac{1-\lambda^2}{6 \lambda^2 n} \mu_3 
     -\frac{1-\lambda}{\lambda n^2} \sum_{jk\in E(H)} h_j h_k  
          + O(n^{-1/2+\eta}) \biggr).
  \end{align*}
\end{thm}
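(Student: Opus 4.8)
\noindent\emph{Proof sketch}. The plan is to realise $\Gd$ as $\calG(n,\lambda)$ conditioned on its degree sequence being $\dvec$, to rewrite the subgraph count as a double expectation --- over a uniformly random permutation (placing a copy of $H$) and over the random graph itself (a random subset of $E(K_n)$) --- and then to invoke the two estimates of this paper for the expectation of the exponential of a complex function. Since $\Gd$ coincides with $\calG(n,\lambda)$ conditioned on $\{\deg G=\dvec\}$, we have $\E N_{\dvec}(H)=\E_{\calG(n,\lambda)}\bigl[N(H)\bigm|\deg G=\dvec\bigr]$. Writing $N(H)=\card{\Aut(H)}^{-1}\sum_\pi\mathbf{1}[H^\pi\subseteq G]$, where $H^\pi$ is the copy of $H$ with edge set $\{\pi(u)\pi(v):uv\in E(H)\}$, and using that under $\calG(n,\lambda)$ the edges inside and outside $E(H^\pi)$ are independent, one gets
\[
  \E N_{\dvec}(H)=\frac{n!\,\lambda^{m}}{\card{\Aut(H)}}\;\E_\pi\!\left[\frac{\Prob\bigl[\deg G'=\dvec-\hvec^{\pi}\bigr]}{\Prob\bigl[\deg G=\dvec\bigr]}\right],
\]
with $\hvec^{\pi}=(h_{\pi^{-1}(j)})_{j=1}^n$, $G\sim\calG(n,\lambda)$, and $G'$ the restriction of $\calG(n,\lambda)$ to the edges of $K_n$ outside $E(H^\pi)$. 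Representing each probability as $(2\pi)^{-n}\int_{[-\pi,\pi]^n}(\cdots)\,d\boldsymbol\theta$ via the characteristic function of the independent edge indicators, and using $\prod_j e^{ih^\pi_j\theta_j}=\prod_{uv\in E(H^\pi)}e^{i(\theta_u+\theta_v)}$, the ratio collapses to $\E_{\boldsymbol\theta}\bigl[\prod_{uv\in E(H^\pi)}g_{uv}(\boldsymbol\theta)\bigr]$, where $g_{jk}(\boldsymbol\theta)=e^{i(\theta_j+\theta_k)}/(1-\lambda+\lambda e^{i(\theta_j+\theta_k)})$ and $\boldsymbol\theta$ carries the (complex) law proportional to $\prod_{j<k}(1-\lambda+\lambda e^{i(\theta_j+\theta_k)})\prod_j e^{-id_j\theta_j}\,d\boldsymbol\theta$ that underlies $\Prob[\deg G=\dvec]$.

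The \emph{random subset} estimate of the paper, applied to the edge set of $\calG(n,\lambda)$, governs this last integral: after shifting each $\theta_j$ through a suitable imaginary part and localising to a small polydisc, its mass sits near a saddle $\boldsymbol\theta^{*}$ at which every $g_{jk}(\boldsymbol\theta^{*})=1+O\bigl(\delta/(\lambda n)\bigr)$ --- the $O(\delta/(\lambda n))$ reflecting the spread of the coordinates of $\boldsymbol\theta^{*}$, which tracks the deviations $d_j-d$ and is ultimately what generates the $R$- and $\delta$-dependent exponent terms --- while the rescaled fluctuations $\boldsymbol\theta-\boldsymbol\theta^{*}$ have a nearly Gaussian law with explicit covariance. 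On this region $\sum_{uv\in E(H^\pi)}\log g_{uv}(\boldsymbol\theta)=\sum_{uv\in E(H)}\log g_{\pi(u)\pi(v)}(\boldsymbol\theta)$ is small, since each $\log g_{jk}(\boldsymbol\theta)$ is controlled by $\theta_j+\theta_k-\theta^{*}_j-\theta^{*}_k$ and by $\delta/(\lambda n)$. I would then evaluate $\E_\pi\bigl[\exp\sum_{uv\in E(H)}\log g_{\pi(u)\pi(v)}(\boldsymbol\theta)\bigr]$ by the paper's estimate for the expectation of the exponential of a complex function of a random permutation, producing a truncated cumulant expansion $\exp\bigl(\kappa_1+\tfrac12\kappa_2+\cdots+O(n^{-b})\bigr)$ in the small quantities $\log g_{\pi(u)\pi(v)}(\boldsymbol\theta)$; here $\|\hvec\|=O(n^{1/2+\eps})$ and $m=O(n^{1+2\eps})$ keep the increments of this function under control, while \eqref{assumption} is precisely the bound making the leading discarded (third-order) contribution $O(n^{-b})$. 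It then remains to average the result over the nearly Gaussian law of $\boldsymbol\theta$: expanding each $\log g_{jk}(\boldsymbol\theta)$ to the required order in $\theta_j+\theta_k$, performing the Gaussian integration (again the \emph{random subset} calculus), and re-expressing everything through $\mu_1,\mu_2,\mu_3$, $R$, $\delta$, $\lambda$ and $\sum_{jk\in E(H)}h_jh_k$ should reproduce exactly the four exponent terms in the statement; the last of them, $-\tfrac{1-\lambda}{\lambda n^2}\sum_{jk\in E(H)}h_jh_k$, is the only one sensitive to the adjacency structure of $H$ rather than merely to $\hvec$, and it arises from the covariance of $\theta_j+\theta_k$ over pairs of $H$-edges sharing a vertex. (As a check, in $\calG(n,\lambda)$ itself $\E N(H)=\tfrac{n!}{\card{\Aut(H)}}\lambda^m$ exactly, so the entire exponential factor measures the discrepancy between $\Gd$ and $\calG(n,\lambda)$ predicted in the introduction.)

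The step I expect to be the main obstacle is the uniform control of the error terms throughout this double reduction. On the analytic side one must check that the part of the $\boldsymbol\theta$-integral away from $\boldsymbol\theta^{*}$ remains negligible once the extra factor $\prod_{uv\in E(H^\pi)}g_{uv}$ is present; after pairing each $g_{uv}$ with its companion factor $1-\lambda+\lambda e^{i(\theta_u+\theta_v)}$ the integrand's modulus equals $\prod_{jk\notin E(H^\pi)}\abs{1-\lambda+\lambda e^{i(\theta_j+\theta_k)}}$, which differs from that of the denominator in only $m=o(n^2)$ of the $\binom n2$ factors, so the tail bound for $\Prob[\deg G=\dvec]$ can be adapted --- this is where $m=o(n^2)$ is really needed. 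On the combinatorial side the delicate point is the cumulant expansion of the permutation average: it requires sharp bounds, uniform over $\boldsymbol\theta$ in the main region, on the moments of $\sum_{uv\in E(H)}\log g_{\pi(u)\pi(v)}(\boldsymbol\theta)$ under the random permutation --- exactly the output the general permutation estimate is designed to supply --- strong enough that truncation at a fixed order leaves error $O(n^{-b})$, and one must determine which mixed moments coupling the $\pi$- and $\boldsymbol\theta$-randomness survive at leading order. Finally, matching the two small exponents $a$ (from~\eqref{curlyA}) and $b$ (from~\eqref{assumption}) against the orders of these moments, while absorbing the $n^{O(\eps)}$ factors that appear along the way, is what forces $a+b<\tfrac12$ and fixes the admissible choice of $\eps=\eps(a,b)$.
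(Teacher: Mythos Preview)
Your route is genuinely different from the paper's, and it contains a misconception about what the tools of Section~\ref{s:sums} provide.

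The paper's proof is considerably shorter because it takes McKay's result~\cite[Theorem~2.1]{ranx} (restated here as Theorem~\ref{treeprob}) as a black box: that theorem already supplies
\[
   P(\dvec,H) = \lambda^m \exp\bigl(f(\dvec,\hvec) + g(\dvec,H) + O(n^{-b})\bigr)
\]
for the probability that a \emph{fixed} copy of $H$ occurs. The only remaining work is to average this over all placements of $H$, which by symmetry is the same as averaging over permutations $\sigma$ of the degree sequence~$\dvec$. So the entire problem reduces to estimating $\E\exp\bigl(f_{\hvec}(\X)+g_H(\X)\bigr)$ for a uniformly random permutation~$\X$, and \emph{only} Theorem~\ref{RanPermThm} (the permutation estimate) is invoked. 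Lemma~\ref{ranper} supplies the required mean and variance (packaged as Lemma~\ref{allvar}), and one checks directly that $\sum_j\alpha_j^3$, $\sum_j\alpha_j\beta_j$, etc., are $O(n^{-b})$ by computing $\|D^{(ja)}f_{\hvec}\|$, $\|D^{(ja)}g_H\|$ and their second differences. The random subset estimate (Theorem~\ref{Theorem_subsets}) plays no role here.

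Your plan instead tries to re-derive McKay's formula inside the proof, via the Fourier integral representation of $\Prob[\deg G=\dvec]$ and a saddle-point analysis in~$\boldsymbol\theta$. That is essentially how~\cite{ranx} itself proceeds, so in principle it could be made to work, but it is far more laborious than necessary. More importantly, you have misidentified the tool: Theorem~\ref{Theorem_subsets} is a statement about $\E e^{f(\X)}$ when $\X$ is a uniform $m$-subset, or a hypergeometric or multinomial vector. It does not ``govern'' a complex contour integral, perform saddle-point localisation, or produce a Gaussian approximation of~$\boldsymbol\theta$; those steps belong to the analytic machinery of~\cite{ranx,MW90}, not to Section~\ref{s:sums} of this paper. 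So the sentence ``the random subset estimate of the paper, applied to the edge set of $\calG(n,\lambda)$, governs this last integral'' does not describe a step that the stated theorem can actually carry out (and the edges of $\calG(n,\lambda)$ are independent Bernoulli, not a subset of fixed size).

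In short: your overall decomposition is not wrong in spirit, but you have pushed the hard part (the saddle-point estimate for $P(\dvec,H)$) back into this proof instead of citing it, and you have attributed to Theorem~\ref{Theorem_subsets} a capability it does not have. The paper's route---quote Theorem~\ref{treeprob}, then apply Theorem~\ref{RanPermThm} once---is both simpler and the intended one.
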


The result of Theorem 
\ref{subcount} simplifies for graphs $H$ with 
moderate degrees, as shown in the following corollary.

\begin{cor}\label{Cor:subcount} Suppose 
  the assumptions of Theorem \ref{subcount} hold for some  fixed $\eta\in (0,\tfrac12)$ and $\eps\in (0,\eps_1(\eta)]$.
  Suppose also that $\mu_3 \leq\lambda^2 n^{1/2+\eta}$. Then, as $n \rightarrow \infty$,
 \begin{equation}\label{sub-1}
     \E N_{\dvec}(H) = \frac{n!}{\card{\Aut(H)}}\, \lambda^{m} \exp\biggl(
    \frac{1-\lambda}{4\lambda}(\mu_1^2 + 2\mu_1 -2\mu_2)  
      -\frac{R}{2 \lambda^2 n} (\mu_1^2 + \mu_1 - \mu_2)    
     + O(n^{-1/2+\eta})\biggr).
 \end{equation}
 Furthermore, if $\mu_2 \leq n^{-3\eps}$ then
 \[
    \E N_{\dvec}(H) = \frac{n!}{\card{\Aut(H)}} \, \lambda^{m}  
     \(1 + O(n^{-1/2+\eta}+n^{-\eps/2}) \),
 \]
 which matches the binomial random graph model $\calG(n,\lambda)$
  up to the error term.
\end{cor}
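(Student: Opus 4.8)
The plan is to read off both displays directly from Theorem~\ref{subcount} by checking that, under the additional hypotheses, the extra terms in its exponent are absorbed by the error term. Throughout I will use two consequences of~\eqref{curlyA}: since $d=\lambda(n-1)\ge\tfrac{n}{3a\log n}$ we have $\lambda^{-1}=O(\log n)$, and since $\delta=O(n^{1/2+\eps})$ we have
\[
  R=\frac1n\sum_{j=1}^n(d_j-d)^2\le\delta^2=O(n^{1+2\eps});
\]
I will also use $0\le\lambda\le1$.

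First I would establish~\eqref{sub-1}. Comparing the exponent in~\eqref{sub-1} with the one in Theorem~\ref{subcount}, it suffices to show that
\[
  \frac{1-\lambda^2}{6\lambda^2 n}\,\mu_3
  \qquad\text{and}\qquad
  \frac{1-\lambda}{\lambda n^2}\sum_{jk\in E(H)} h_j h_k
\]
are both $O(n^{-b})$, since they then merge into the $O(n^{-b})$ already inside the exponential. For the first, rewrite it as $\tfrac{1-\lambda^2}{6n}\cdot\tfrac{\mu_3}{\lambda^2}$ and apply $1-\lambda^2\le1$ together with the hypothesis $\mu_3=O(\lambda^2 n^{1-b})$, i.e.\ $\mu_3/\lambda^2=O(n^{1-b})$. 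For the second, since every vertex $j$ lies in exactly $h_j$ edges of $H$,
\[
  \sum_{jk\in E(H)} h_j h_k \le \tfrac12\sum_{jk\in E(H)}(h_j^2+h_k^2) = \tfrac12\sum_{j=1}^n h_j^3 = \tfrac12\,n\mu_3,
\]
so that
\[
  \frac{1-\lambda}{\lambda n^2}\sum_{jk\in E(H)} h_j h_k \le \frac{\mu_3}{2\lambda n}=O\!\left(\frac{\lambda^2 n^{1-b}}{\lambda n}\right)=O(\lambda\,n^{-b})=O(n^{-b}).
\]
This proves~\eqref{sub-1}.

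Next I would deduce the second display from~\eqref{sub-1}, whose hypotheses are all in force. Because each $h_j$ is a non-negative integer, $h_j^2\ge h_j$, hence $\mu_2\ge\mu_1$; combined with the power-mean inequality $\mu_1^2\le\mu_2$ and the new hypothesis $\mu_2=O(n^{-3\eps})$, this makes $\mu_1$, $\mu_1^2$ and $\mu_2$ (and therefore the combinations $\mu_1^2+2\mu_1-2\mu_2$ and $\mu_1^2+\mu_1-\mu_2$) all $O(n^{-3\eps})$. The two surviving terms of~\eqref{sub-1} are then bounded by
\[
  \frac{1-\lambda}{4\lambda}\bigl(\mu_1^2+2\mu_1-2\mu_2\bigr)=O(\log n)\cdot O(n^{-3\eps})=O(n^{-\eps/2}),
\]
\[
  \frac{R}{2\lambda^2 n}\bigl(\mu_1^2+\mu_1-\mu_2\bigr)=O\!\left(\frac{n^{1+2\eps}\log^2 n}{n}\right)\cdot O(n^{-3\eps})=O(n^{-\eps}\log^2 n)=O(n^{-\eps/2}),
\]
the last equality in each line using $\log n=o(n^{\eps/2})$. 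Hence the entire exponent in~\eqref{sub-1} is $O(n^{-\eps/2}+n^{-b})=o(1)$, and $\exp(O(x))=1+O(x)$ as $x\to0$ yields the stated estimate. Finally, in $\calG(n,\lambda)$ there are exactly $n!/\card{\Aut(H)}$ copies of $H$ on the vertex set $\{1,\ldots,n\}$, each present as a subgraph of a random $G$ with probability $\lambda^m$, so the expected number of copies in that model is $\tfrac{n!}{\card{\Aut(H)}}\,\lambda^m$, which matches.

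None of these steps is deep; the only real care needed is bookkeeping. The factors $\lambda^{-1}=O(\log n)$ and $\lambda^{-2}=O(\log^2 n)$ are \emph{not} $O(1)$ in this regime, so they must be tracked and kept grouped with the small quantities they multiply; and the estimate of the $R$-term in the last step would fail without the bound $\mu_1\le\mu_2$, which itself relies on the integrality of the degrees $h_j$.
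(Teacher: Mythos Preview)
Your proof is correct and follows essentially the same approach as the paper's own proof: bound the edge-sum term via $h_jh_k\le\tfrac12(h_j^2+h_k^2)$ to reduce it to a $\mu_3$ estimate, and for the second part use $\mu_1\le\mu_2$ (integrality of $h_j$), $\mu_1^2\le\mu_2$ (power mean), and $R=O(n^{1+2\eps})$. Your version is more explicit in tracking the $\lambda^{-1}=O(\log n)$ factors, but the argument is the same.
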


McKay~\cite[Theorem~2.8(a,b)]{ranx} gave formulae for the number of
perfect matchings and cycles of given size in $G\sim \Gd$ when
$\dvec = (d,\ldots,d)$ is regular (in other words,  when $\delta =0$).
Applying \eqref{sub-1} in these cases ($H$ is a perfect matching, or a cycle
of a given length) reproduces these expressions when $\dvec$ is
regular, and generalizes them to irregular degree sequences.
Kim, Sudakov and Vu~\cite{KSV2007} obtained a result overlapping the last part of
Corollary~\ref{Cor:subcount} for the case that~$H$ has a constant
number of edges and $\dvec$ is regular with $d=o(n)$.

For regular subgraphs $H$,  we have the following result.

\begin{cor}\label{Cor:regularsubgraphs}
For any constant $\eta\in(0,\frac12)$  and 
 every fixed 
$\eps \in (0,\eps_1(\eta)]$ the following holds  as $n \rightarrow \infty$, where 
$\eps_1(\eta) $ is  provided by  Theorem \ref{subcount}.
Let $\dvec$ be a degree sequence which satisfies~\eqref{curlyA}.
  Suppose also that 
   $h\leq n^{2\eps}$ is a  positive integer and $nh$ is even.
\begin{itemize}\itemsep=0pt
\item[(a)] Let $H$ be an $h$-regular graph. Then
\[ 
  \E N_{\dvec}(H) = 
   \frac{n!}{\card{\Aut(H)}}\, \lambda^{m}
     \exp\biggl( -\frac{1-\lambda}{4\lambda} h(h-2) - \frac{Rh}{2\lambda^2n}
           + O(n^{-1/2+\eta}) \biggr).
\]
\item[(b)] The expected total number
 of $h$-regular spanning subgraphs of $G\sim\Gd$ is
 \[ 
   \frac{\sqrt 2}{(h!)^n} \biggl( \frac{2\lambda m}{e}\biggr)^{\!m}
   \exp\biggl( -\frac{h^2-1}{4} 
      -\frac{1-\lambda}{4\lambda} h(h-2) - \frac{Rh}{2\lambda^2n}
              + O(n^{-1/2+\eta}) \biggr).
 \]
\end{itemize} 
\end{cor}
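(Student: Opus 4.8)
The plan is to deduce part~(a) from Theorem~\ref{subcount} through Corollary~\ref{Cor:subcount} by specialising to a regular degree sequence, and then to obtain part~(b) by summing over isomorphism types and quoting a classical enumeration formula. Throughout I would fix $a,b>0$ with $a+b<\nfrac12$ and take $\eps=\eps(a,b)>0$ below the threshold supplied by Theorem~\ref{subcount} and also small enough for the estimates below; I will use repeatedly that \eqref{curlyA} forces $\lambda\ge(3a\log n)^{-1}$, i.e.\ $1/\lambda=O(\log n)$, and that $a>0$ forces $b<\nfrac12$.

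For part~(a), let $H$ be $h$-regular with $h=O(n^{2\eps})$ and $nh$ even, so that $\hvec=(h,\dots,h)$, $m=nh/2=O(n^{1+2\eps})$, $\|\hvec\|=h=O(n^{1/2+\eps})$ and $\mu_t=h^t$. First I would check the hypotheses: using $\delta=O(n^{1/2+\eps})$ and $1/\lambda=O(\log n)$ gives $\delta^3\mu_3/(\lambda^3 n^2)=O\bigl(n^{9\eps-1/2}(\log n)^3\bigr)=O(n^{-b})$ once $\eps$ is small enough, so \eqref{assumption} holds; likewise $\mu_3=h^3=O(n^{6\eps})=O(\lambda^2 n^{1-b})$, so the extra hypothesis of Corollary~\ref{Cor:subcount} holds too. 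Then I would apply \eqref{sub-1}: substituting $\mu_1=h$, $\mu_2=h^2$ gives $\mu_1^2+2\mu_1-2\mu_2=-h(h-2)$ and $\mu_1^2+\mu_1-\mu_2=h$, which is exactly the claimed expression.

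For part~(b), the key observation is that the exponent found in part~(a) depends on $H$ only through $h$ (and $\lambda,R,n$), so $\E N_{\dvec}(H)=\dfrac{n!}{\card{\Aut(H)}}\,\lambda^m e^K$ with $K=-\tfrac{1-\lambda}{4\lambda}h(h-2)-\tfrac{Rh}{2\lambda^2 n}+O(n^{-b})$ the same for every $h$-regular $H$ on $\{1,\dots,n\}$. Since the number of $h$-regular spanning subgraphs of a graph $G$ equals the sum, over isomorphism classes $[H]$ of $h$-regular graphs, of the number of copies of $H$ in $G$, linearity of expectation over $G\sim\Gd$ gives
\[
 \E(\text{\#\,$h$-regular spanning subgraphs of }G)=\sum_{[H]}\E N_{\dvec}(H)=\lambda^m e^K\sum_{[H]}\frac{n!}{\card{\Aut(H)}}=\lambda^m e^K\,L(n,h),
\]
where $L(n,h):=\sum_{[H]}n!/\card{\Aut(H)}$ is the number of labelled $h$-regular graphs on $n$ vertices. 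I would then quote the classical asymptotic enumeration of regular graphs (e.g.\ \cite{MW90}), which for $nh$ even and $h=O(n^{2\eps})$ gives $L(n,h)=\dfrac{(nh)!}{(nh/2)!\,2^{nh/2}(h!)^n}\exp\bigl(-\tfrac{h^2-1}{4}+O(n^{-b})\bigr)$, the error being a negative power of $n$ in this range; Stirling's formula together with $nh=2m$ turns the binomial-type factor into $(nh-1)!!=\sqrt2\,(2m/e)^m\bigl(1+O(n^{-1})\bigr)$. Substituting these and the value of $K$ into the display above then produces exactly the stated formula.

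The only genuinely external ingredient is the last-quoted enumeration formula, and since $h$ is allowed to grow one should invoke a version valid for $h=o(\sqrt n)$ with an explicit polynomially small error term rather than one for bounded $h$; pinning down such a statement with a good enough error bound is the main point to be careful about. The rest is routine bookkeeping, namely choosing the single parameter $\eps=\eps(a,b)$ small enough to meet the hypotheses of Theorem~\ref{subcount} and Corollary~\ref{Cor:subcount} and to force all residual contributions (of the $n^{9\eps-1/2}$, $n^{6\eps-1}$ and enumeration-error types) below $O(n^{-b})$, each of these bounds resting on $1/\lambda=O(\log n)$ from \eqref{curlyA}.
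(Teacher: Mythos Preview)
Your proposal is correct and follows essentially the same route as the paper: part~(a) is obtained directly from Corollary~\ref{Cor:subcount} after substituting $\mu_t=h^t$, and part~(b) is obtained by observing that the exponent depends only on~$h$, summing $n!/\card{\Aut(H)}$ over isomorphism types to get the number of labelled $h$-regular graphs, and then inserting the classical enumeration formula together with Stirling's approximation. The only minor discrepancy is bibliographic: the relevant enumeration formula with error term $O(h^3/n)$ in this degree range is~\cite{bdmreg} rather than~\cite{MW90}.
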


The proofs of Theorem 
\ref{subcount} and Corollaries \ref{Cor:subcount} and \ref{Cor:regularsubgraphs}
are given in Section \ref{s:subgraph}.

\bigskip

Our second main result concerns the expected number of (labelled) spanning trees in $\Gd$.
This extends, and corrects an error in, a result of McKay~\cite[Theorem~2.8(c)]{ranx}.
McKay considered the regular case only, and gave the first term as $\dfrac{7(1-\lambda)}{2\lambda}$. 
However, the 
correct value is $-\dfrac{1-\lambda}{2\lambda}$, as below.

\begin{thm}\label{treecount}
For any constant $\eta\in(0,\frac12)$  there is some
$\eps_2(\eta) > 0$ such that the following holds
 for every fixed $\eps \in (0,\eps_2(\eta)]$.
Let $\dvec$ be a degree sequence which satisfies~\eqref{curlyA}.
Then, as $n \rightarrow \infty$, the expected number of spanning trees in $G\sim\Gd$ is
  \[
     n^{n-2}\lambda^{n-1} \exp\biggl(
       -\frac{1-\lambda}{2\lambda}
       - \frac{R}{2\lambda^2 n} + O(n^{-1/2+\eta})
    \biggr).
  \]
\end{thm}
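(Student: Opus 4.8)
The plan is to express the number of spanning trees as a sum over trees $T$ on the vertex set $\{1,\dots,n\}$ of the indicator that $T\subseteq G$, so that by linearity
$\E(\#\text{spanning trees}) = \sum_{T} \Prob(T\subseteq G)$. Each spanning tree has exactly $n-1$ edges, and all have the same number of edges, but they have wildly varying degree sequences, so unlike in Theorem~\ref{subcount} we cannot simply quote a single subgraph-count formula. Instead I would group the trees by their degree sequence $\hvec=(h_1,\dots,h_n)$, which for a tree satisfies $\sum_j h_j = 2(n-1)$, hence $\mu_1 = 2(n-1)/n = 2-2/n$. The contribution of a fixed degree sequence $\hvec$ is (number of labelled trees with degree sequence $\hvec$) times $\Prob(T\subseteq G)$ for any one such $T$; the former is the classical Cayley/Prüfer count $\binom{n-2}{h_1-1,\dots,h_n-1}$. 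For $\Prob(T\subseteq G) = \card{\Aut(T)}\,\E N_{\dvec}(T)/n!$ one wants to apply Theorem~\ref{subcount}, but here $m=n-1$ fails the hypothesis $m=O(n^{1+2\eps})$ only mildly in scale, and more seriously $\hvec$ need not satisfy $\|\hvec\|=O(n^{1/2+\eps})$ for every tree. So the real work is an extension of the analytic machinery (the expectation-of-complex-exponential estimates referred to in the abstract, presumably proved earlier in the full paper) to spanning subgraphs with $\Theta(n)$ edges and a degree sequence that is concentrated but occasionally spiky.

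Granting such an estimate, I expect $\Prob(T\subseteq G)$ to have the form $\lambda^{n-1}$ times an exponential correction depending on $\hvec$ through the quantities $\mu_1,\mu_2,\mu_3$ and $\sum_{jk\in E(T)}h_jh_k$, exactly as in Theorem~\ref{subcount}. The strategy is then to perform the sum over $\hvec$ by recognizing it as an expectation: if $(Y_1,\dots,Y_n)$ are the degrees in a uniformly random labelled tree on $n$ vertices (equivalently $Y_j = 1 + (\text{number of times } j \text{ appears in a uniform Prüfer sequence})$), then $\sum_T \binom{n-2}{h_1-1,\dots}/n^{n-2}$-weighted quantities become expectations over the $Y_j$. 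The $Y_j-1$ are jointly a multinomial$(n-2; 1/n,\dots,1/n)$ vector, so each $Y_j-1$ is asymptotically Poisson$(1)$ with the usual negative association. Thus $\mu_1 \to 2$, $\E\mu_2 = \frac1n\sum\E Y_j^2 \to \E(1+\mathrm{Poi}(1))^2 = \E(1+Z)^2$ with $Z\sim\mathrm{Poi}(1)$, giving $\E\mu_2\to 1+3\cdot1 = $ (compute: $\E(1+Z)^2 = 1+2\E Z+\E Z^2 = 1+2+2 = 5$, but one must track the $O(1/n)$ corrections from sampling without replacement), and similarly $\E\mu_3$ and $\E\sum_{jk\in E(T)}h_jh_k$ are evaluated; the edge-sum term needs the joint law of $(Y_j,Y_k)$ along a random edge of the tree, which again factors through Prüfer-sequence combinatorics. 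Concentration of these averages (so that the exponential correction may be pulled outside the sum with negligible error) follows from the bounded differences / negative association of the multinomial. Finally one plugs in: $\mu_1^2+2\mu_1-2\mu_2 = 4 + 4 - 2\cdot5 = -2$ to leading order, so $\frac{1-\lambda}{4\lambda}(\mu_1^2+2\mu_1-2\mu_2) \to -\frac{1-\lambda}{2\lambda}$, which is precisely the claimed first term and pinpoints the sign error in~\cite{ranx}; the $R$-term $-\frac{R}{2\lambda^2 n}(\mu_1^2+\mu_1-\mu_2)$ contributes $-\frac{R}{2\lambda^2 n}(4+2-5) = -\frac{R}{2\lambda^2 n}$; and the remaining terms ($\mu_3$ term, edge-sum term) are $O(R\,/n)$ or smaller and absorbed into $O(n^{-b})$. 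Combining $\sum_T\binom{n-2}{\cdots} = n^{n-2}$ with $\lambda^{n-1}$ and the exponential correction gives the stated formula.

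The main obstacle, I expect, is \emph{not} the tree combinatorics — Prüfer sequences make the moment computations and their concentration fairly mechanical — but rather justifying the per-tree probability estimate $\Prob(T\subseteq G) = \lambda^{n-1}\exp(\cdots)$ uniformly over all (or all but a negligible-weight family of) spanning trees, since $m = n-1$ is an order of magnitude larger than the $O(n^{1+2\eps})$ ceiling in Theorem~\ref{subcount} would suggest for "small" $H$, and the error analysis in the underlying complex-exponential estimate must be shown to stay $O(n^{-b})$ at this edge density. A secondary subtlety is that a small fraction of trees have a vertex of degree $\gg n^{1/2+\eps}$ (e.g. near-stars); these must be shown to contribute negligibly in aggregate, using that the Prüfer multinomial puts exponentially small probability on any $Y_j$ being large, which dominates the $\lambda^{n-1}$ factor's failure to be controlled there.
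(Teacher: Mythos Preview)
Your approach matches the paper's almost exactly: split the sum $\sum_T P(\dvec,T)$ by the tree's degree sequence, use the Pr\"ufer/multinomial structure (the paper's Lemma~\ref{distributions}) to turn this into an expectation over a multinomial vector, evaluate the exponent's mean and variance via Poisson-type moments, and treat high-degree (``bad'') trees separately. Your arithmetic $\mu_1^2+2\mu_1-2\mu_2\to -2$ and $\mu_1^2+\mu_1-\mu_2\to 1$ is exactly how the two displayed constants arise. One harmless slip: $m=n-1$ \emph{does} satisfy $m=O(n^{1+2\eps})$, so that hypothesis is no obstacle; the only genuine restriction is $\|\hvec\|=O(n^{1/2+\eps})$, which the paper relaxes to $\|\hvec\|\le n^{3\eps}$ via a specialized form of the McKay formula (Lemma~\ref{treecor}) rather than routing through Theorem~\ref{subcount}. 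The tree-structure term $\sum_{jk\in E(T)}(\cdots)$ is dealt with, as you anticipate, by averaging over all trees with the same degree sequence (Lemma~\ref{treeedgeprob}); this average is $O(n^{-1+6\eps})$ for good trees and so disappears into the error.

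Your proposed handling of bad trees, however, does not work as stated. The multinomial tail puts weight $e^{-\Theta(n^{3\eps}\log n)}$ on the bad region, but under~\eqref{curlyA} one may have $\lambda\asymp 1/\log n$ and hence $\lambda^{n-1}=\exp(-\Theta(n\log\log n))$, which is \emph{far} smaller; so the crude bound $P(\dvec,T)\le 1$ combined with the multinomial tail does not beat $n^{n-2}\lambda^{n-1}$. The paper's fix is the monotonicity $P(\dvec,T)\le P(\dvec,F)$ for any subgraph $F\subseteq T$: take $F$ to be a subforest with maximum degree $\le n^{3\eps}$ obtained by deleting at most $\sum_j\max\{0,h_j-n^{3\eps}\}$ edges, so that $P(\dvec,T)\le \lambda^{\,n-1-\sum_j\max\{0,h_j-n^{3\eps}\}}\exp(O(n^{2\eps}/\lambda(1-\lambda)))$ via Lemma~\ref{treecor}. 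The resulting factor $\lambda^{-\sum_j\max\{0,h_j-n^{3\eps}\}}$ is then genuinely dominated by the multinomial tail, since $\sum_y \lambda^{-\max\{0,y+1-n^{3\eps}\}}/y!$ still converges and the truncated sum is $O(e^{-n^{3\eps}})$.
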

The proof of Theorem  \ref{treecount} is given in Section \ref{s:dense}.

\bigskip

 Let~$G$ and $H^{[r]}$ be graphs with vertex sets $\{ 1,\ldots, n\}$ and
 $\{ 1,\ldots, r\}$, respectively.   The
\textit{number of induced copies of~$H^{[r]}$ in~$G$}
is the number
of induced subgraphs of $G$ that are isomorphic to~$H ^{[r]}$.
For given $\dvec,H^{[r]}$, the random variable  $\widetilde{N}_{\dvec}(H^{[r]})$ is the
number of induced copies of $H^{[r]}$ in~$G$ when $G$ is taken
at random from~$\Gd$.
Our third main result estimates the expectation $\widetilde{N}_{\dvec}(H^{[r]})$  when $r$ is not too large.
If  $\hvec^{[r]} = (h_1,\ldots, h_r)$ is the degree sequence of $H^{[r]}$ then we define  
\begin{equation}
\label{omega}
 \omega_{t} = \sum_{j=1}^r (h_j - \lambda(r-1))^t,
  \qquad 
    \text{for $t \geq 1$}.
\end{equation}
Let $m=\frac12\sum_{j=1}^r h_j$ be the number of edges of the graph $H^{[r]}$.
Note that the automorphism group  $\Aut(H^{[r]})$ 
is a subgroup of the group $S_r$ of all
permutations of $\{1,\ldots,r\}$.

\begin{thm}\label{inducedsubcount}
For any constant $\eta\in(0,\frac12)$  there is some
$\eps_3(\eta) > 0$ such that the following holds for every fixed $\eps \in(0,\eps_3(\eta)]$.
Let $\dvec$ be a degree sequence which satisfies~\eqref{curlyA}.
Suppose that $H^{[r]}$ is a graph on vertex set $\{1,\ldots, r\}$ with $m$ edges and
degree sequence $\hvec^{[r]}$ such that 
\begin{equation}
\label{induced-assumption} 
 r\leq n^{1/2 + \eps} \quad  \text{and}\qquad 
  \frac{\delta^3}{\lambda^3(1-\lambda)^3 n^3}
  \sum_{j=1}^r\, \Abs{h_j - \lambda(r-1)}^3 \leq n^{-1/2+\eta}.
\end{equation}
   Then, as $n \rightarrow \infty$,
  \[
       \E \widetilde{N}_{\dvec}(H^{[r]}) = \frac{r!}{|\Aut(H^{[r]})|}\, \binom{n}{r}\,  \lambda^m\, (1-\lambda)^{\binom{r}{2}-m}
       \exp\( \varLambda_0+\varLambda_1+\varLambda_2 + O(n^{-1/2+\eta})\),
  \]
  where
 \begin{align*}
  \varLambda_0 &=
    -\frac{\omega_{2}}{2\lambda(1-\lambda) n} +  
 \frac{R\, \omega_{2}}{2\lambda^2(1-\lambda)^2 n^2}, \\
 \varLambda_1 &=
   \frac{r^2}{2n}+ \frac{(1-2\lambda) \omega_{1}}{2\lambda(1-\lambda) n}
  - \frac{\omega_{1}^2}{4\lambda(1-\lambda) n^2} \\[-0.5ex]
  &{\kern 5em}- \frac{r^2 R}{2\lambda(1-\lambda)n^2}
   - \frac{r\,\omega_{2}}{2\lambda(1-\lambda) n^2} 
  - \frac{(1-2\lambda) \omega_{3}}{6\lambda^2 (1-\lambda)^2 n^2}
  = O\(n^{4\eps}(\log n)^2\), \\
 \varLambda_2 &= - \frac{(1-2\lambda) R\,\omega_{1}}{2\lambda^2(1-\lambda)^2 n^2}
-\frac{r\, \omega_{1} \sum_{j=1}^n (d_j-d)^3}{2\lambda^2(1-\lambda)^2 n^4}
 = O(n^{-1/3+\eta/3+4\eps}).
  \end{align*}
\end{thm}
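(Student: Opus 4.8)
The plan is to adapt the proof of Theorem~\ref{subcount}, the new ingredient being that the non-edges of $H^{[r]}$ inside the host set must be accounted for as well as its edges; this is what produces the powers of $1-\lambda$ and the centred degree statistics $\omega_t$ of~\eqref{omega}. First I would write, summing over injections $\psi\colon\{1,\dots,r\}\hookrightarrow\{1,\dots,n\}$,
\[
  \E\widetilde N_{\dvec}(H^{[r]})
   =\frac{1}{\card{\Aut(H^{[r]})}}\sum_{\psi}
     \Prob_{G\sim\Gd}\!\bigl[G[\psi(\{1,\dots,r\})]=\psi(H^{[r]})\bigr],
\]
so that the task reduces to estimating, for a fixed $r$-set $S$ of host vertices and a fixed graph $J$ on $S$ isomorphic to $H^{[r]}$, the probability that $G$ restricted to $S$ is \emph{exactly}~$J$: the $m$ pairs in $E(J)$ are edges and the remaining $\binom r2-m$ pairs inside $S$ are non-edges. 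As in the non-induced case this probability is a ratio of $n$-dimensional complex integrals, equivalently it is delivered by the estimates for the expectation of $e^{f}$ over a random permutation or subset that underlie Theorem~\ref{subcount}; the only change is that in the numerator, for pairs $vw$ with $v,w\in S$, the factor $1+z_vz_w$ is replaced by $z_vz_w$ when $vw\in E(J)$ and by $1$ when $vw\notin E(J)$. At the saddle point these ratios evaluate to $\lambda$ and to $1-\lambda$ respectively, giving the leading term $\lambda^m(1-\lambda)^{\binom r2-m}$; summing $\card{\Aut(H^{[r]})}^{-1}$ over the $r!\binom nr$ choices of $\psi$ produces the displayed prefactor.

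The substance of the proof is the asymptotic expansion of the ratio. I would Taylor-expand $\log\frac{z_vz_w}{1+z_vz_w}$ and $\log\frac1{1+z_vz_w}$ about the saddle point of the denominator, insert the known displacement of the saddle (governed by the host deviations $d_j-d$, hence by $\delta$, $R$ and $\sum_j(d_j-d)^3$), integrate against the approximately Gaussian fluctuation measure, and finally sum over~$\psi$. That last step turns all the $\psi$-dependent corrections into symmetric functions of $\dvec$ and of $\hvec^{[r]}$; the vanishing of $\sum_j(d_j-d)=0$ kills several a priori larger contributions, and what survives to the target accuracy $O(n^{-b})$ is exactly $\varLambda_0+\varLambda_1+\varLambda_2$. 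Heuristically, the largest terms form $\varLambda_0$ — the $\omega_2/n$ term together with its $R$-correction $R\,\omega_2/n^2$ coming from the non-trivial covariance of the tilted edge variables; the next tier forms the six terms of $\varLambda_1$, with $r^2/(2n)$ tracing to $\binom nr$ versus $n^r/r!$ combined with the interaction among the $\Theta(r^2)$ constrained pairs, the $\omega_1,\omega_1^2$ terms to the linear response, the $\omega_3$ term to the cubic response, and the $r^2R/n^2$, $r\omega_2/n^2$ terms to cross-effects with the degree irregularity; and the smallest, third-moment contributions form $\varLambda_2$. The stated sizes $\varLambda_1=O(n^{4\eps}(\log n)^2)$ and $\varLambda_2=O(n^{-1/6-b/3+4\eps})$ then follow by routine estimates from~\eqref{curlyA}, from $\|\hvec^{[r]}\|\le r-1=O(n^{1/2+\eps})$, and from~\eqref{induced-assumption}, the latter used (via H\"older's inequality) to bound $\omega_1$ and $\omega_3$, and hence the terms of $\varLambda_2$, in terms of $\sum_j|h_j-\lambda(r-1)|^3$. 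A convenient check throughout is that both the hypotheses and the formula are invariant under $\lambda\mapsto1-\lambda$ together with $H^{[r]}\mapsto\overline{H^{[r]}}$, reflecting the fact that an induced copy of $H^{[r]}$ in $G$ is an induced copy of $\overline{H^{[r]}}$ in $\overline G$.

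The main obstacle is the error analysis rather than the bookkeeping. Unlike in Theorem~\ref{subcount}, \emph{every} one of the $\binom r2=\Theta(n^{1+2\eps})$ pairs inside $S$ carries a constraint, so one is perturbing the base measure on a coordinate set whose size grows with $n$ — still only a $O(n^{-1+2\eps})$ fraction of all $\binom n2$ pairs, but enough that one must show the complex-analytic estimates behind Theorem~\ref{subcount} retain their $O(n^{-b})$ accuracy under this large-but-sparse perturbation, and that all the cross terms between the constraint set and the degree-fluctuation corrections genuinely fit inside $\varLambda_1$ and $\varLambda_2$ at the claimed orders. Keeping the non-edge factors $\frac1{1+z_vz_w}$ bounded away from their singularity over the relevant region of integration is exactly where the lower bound $n-d-1\ge n/(3a\log n)$ in~\eqref{curlyA} is used, symmetrically to the role played by $d\ge n/(3a\log n)$ in the non-induced argument.
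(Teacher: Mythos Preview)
Your plan diverges substantially from the paper's, and the divergence points to a genuine gap. The paper does not revisit any saddle-point or complex-integral analysis: the per-placement probability that $G[S]$ equals a fixed isomorph of $H^{[r]}$ is quoted verbatim from McKay~\cite{ranx} as Theorem~\ref{induced-prob}, which already supplies the factor $\lambda^m(1-\lambda)^{\binom r2-m}$ together with an explicit exponent $f_{\rm ind}(\sigma)$ built from the mixed moments $\omega_{s,t}(\sigma)=\sum_{j=1}^r(d_{\sigma_j}-d)^s(h_j-\lambda(r-1))^t$. All of your discussion of Taylor-expanding $\log\frac{z_vz_w}{1+z_vz_w}$ and $\log\frac1{1+z_vz_w}$, controlling the singularity of the non-edge factor, and bounding the integral under a ``large-but-sparse perturbation'' is effort spent re-deriving a result the paper simply cites.

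Conversely, the step you wave through as ``sum over $\psi$'' and ``bookkeeping'' is the entire content of the paper's proof. After quoting Theorem~\ref{induced-prob}, the task is to evaluate $\E e^{f_{\rm ind}(\X)}$ for a uniformly random permutation $\X\in S_n$; this is done by applying the martingale estimate Theorem~\ref{RanPermThm}(b), which requires (i) computing $\E f_{\rm ind}(\X)$ and $\Var f_{\rm ind}(\X)$ to accuracy $O(n^{-b})$ via the moment identities of Lemma~\ref{ranper} (this is Lemma~\ref{inducedallvar}, and it is where the terms $\varLambda_0,\varLambda_1,\varLambda_2$ actually emerge, including the third-moment term $r\omega_1\sum_j(d_j-d)^3$ coming from a specific covariance), and (ii) bounding the martingale increments $\alpha_j=\alpha_j(f_{\rm ind},S_n)$ and $\varDelta_{jk}$ so that $\sum_j(\alpha_j^3+\alpha_j\beta_j+\alpha_j^4+\beta_j^2)=O(n^{-b})$, which is exactly where assumption~\eqref{induced-assumption} is used via~\eqref{bounds_PM}. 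Your proposal contains no mechanism for this averaging beyond the assertion that symmetric-function identities and $\sum_j(d_j-d)=0$ make it come out; that is not enough, because $f_{\rm ind}$ sits inside an exponential and one needs a second-order concentration estimate (here supplied by the Doob-martingale machinery of Section~\ref{s:sums}) to pass from $\E e^{f_{\rm ind}}$ to $e^{\E f_{\rm ind}+\frac12\Var f_{\rm ind}}$ with controlled error.
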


For induced subgraphs of more moderate order, the terms
$\varLambda_1$ and $\varLambda_2$ fit into the $O(n^{-1/2+\eta})$ error term. 

\begin{cor}
\label{induced-corollary}
 Suppose 
  the assumptions of Theorem \ref{inducedsubcount} hold for some  fixed $\eta\in (0,\tfrac12)$ and $\eps\in (0,\eps_3(\eta)]$.
  Suppose also that
  \[
      r^2(1+\delta^2/n) \leq \lambda^2(1-\lambda)^2 n^{1/2+\eta}.
  \]
Then, as $n \rightarrow \infty$,
\begin{equation}\label{induced-1}  
\E \widetilde{N}_{\dvec}(H^{[r]})  = \frac{r!}{|\Aut(H^{[r]})|}\, \binom{n}{r}\, \lambda^m\, (1-\lambda)^{\binom{r}{2}-m}
  \exp\( \varLambda_0 + O(n^{-1/2+\eta})\).
\end{equation}
Furthermore, if $r \leq n^{1/3-\eps}$ then
\[
  \E \widetilde{N}_{\dvec}(H^{[r]})  =\frac{r!}{|\Aut(H^{[r]})|}\, \binom{n}{r}\, \lambda^m\, (1-\lambda)^{\binom{r}{2}-m}\, \(1 + O(n^{-1/2+\eta}+n^{-\eps/2})\),
\]
which matches the binomial random graph model $\calG(n,\lambda)$ up to the error term.
\end{cor}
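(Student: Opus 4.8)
The plan is to deduce both assertions from Theorem~\ref{inducedsubcount} by substituting the extra hypotheses into the explicit expressions for $\varLambda_0,\varLambda_1,\varLambda_2$ given there and bounding each summand. I would first record the elementary facts the argument uses: since $0\le h_j\le r-1$ for each $j$, we have $|h_j-\lambda(r-1)|\le r-1$, so by~\eqref{omega} $|\omega_t|\le r(r-1)^t\le r^{t+1}$ for $t=1,2,3$; from~\eqref{R-lambda-delta}, $R\le\delta^2$ and $\bigl|\sum_{j=1}^n(d_j-d)^3\bigr|\le\delta\sum_{j=1}^n(d_j-d)^2=nR\delta\le n\delta^3$; and from~\eqref{curlyA}, $\lambda,\,1-\lambda\ge(3a\log n)^{-1}$, whence $1/(\lambda(1-\lambda))=O((\log n)^2)$.

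For the first assertion, the hypothesis $r^2(1+\delta^2/n)=O(\lambda^2(1-\lambda)^2n^{1-b})$ supplies both $r^2=O(\lambda^2(1-\lambda)^2n^{1-b})$ and $r^2\delta^2=O(\lambda^2(1-\lambda)^2n^{2-b})$. The point is that, apart from the single term $r^2/(2n)$, every summand of $\varLambda_1$ and of $\varLambda_2$ has a factor $\lambda(1-\lambda)$ or $\lambda^2(1-\lambda)^2$ in its denominator, while its numerator is, up to the bounded factor $|1-2\lambda|\le 1$, at most a non-negative power of $r^2$, or $n$ times a non-negative power of $r^2\delta^2$ (using the estimates above; e.g. $|r\,\omega_2|\le r^4=(r^2)^2$ and $|r\,\omega_1\sum_j(d_j-d)^3|\le r^3\cdot n\delta^3=n(r^2\delta^2)^{3/2}$). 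Substituting the two displayed bounds then puts into the numerator at least as many powers of $\lambda^2(1-\lambda)^2$ as there are powers of $\lambda(1-\lambda)$ in the denominator, so a short term-by-term check shows each summand is $O(n^{-b})$ (several are in fact $O(n^{-3b/2})$ or smaller, while $r^2/(2n)$ needs only $r^2=O(n^{1-b})$). Hence $\varLambda_1+\varLambda_2=O(n^{-b})$, and substituting this into the formula of Theorem~\ref{inducedsubcount} yields~\eqref{induced-1}.

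For the second assertion I would additionally use $r=O(n^{1/3-\eps})$, which gives $\omega_2\le r(r-1)^2=O(n^{1-3\eps})$. Combined with $R\le\delta^2=O(n^{1+2\eps})$ and $1/(\lambda(1-\lambda))=O((\log n)^2)$, the two terms of $\varLambda_0$ are $O(n^{-3\eps}(\log n)^2)$ and $O(n^{-\eps}(\log n)^4)$ respectively, so $\varLambda_0=O(n^{-\eps}(\log n)^4)=O(n^{-\eps/2})$ for $n$ sufficiently large. Then $\varLambda_0+O(n^{-b})=o(1)$, so $\exp(\varLambda_0+O(n^{-b}))=1+O(n^{-b}+n^{-\eps/2})$, which is the stated formula. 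The closing remark needs no computation: $\frac{r!}{|\Aut(H^{[r]})|}\binom{n}{r}\lambda^m(1-\lambda)^{\binom{r}{2}-m}$ is precisely the expected number of induced copies of $H^{[r]}$ in $\calG(n,\lambda)$, since on each of the $\binom{n}{r}$ vertex sets the induced subgraph equals a prescribed $m$-edge graph with probability $\lambda^m(1-\lambda)^{\binom{r}{2}-m}$, and $r!/|\Aut(H^{[r]})|$ of these graphs are isomorphic to~$H^{[r]}$.

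I do not expect a genuine obstacle here: every step is a substitution into the already-established formula of Theorem~\ref{inducedsubcount}. The one point that requires care is the bookkeeping of the $\lambda$ and $1-\lambda$ factors. The first hypothesis is arranged exactly so that each $\lambda^2(1-\lambda)^2$ occurring in a denominator of $\varLambda_1$ or $\varLambda_2$ is cancelled, with room to spare, by the $\lambda^2(1-\lambda)^2$ released from the bounds on $r^2$ and $r^2\delta^2$; and in $\varLambda_0$ one must notice that the $(\log n)^{O(1)}$ coming from $1/(\lambda(1-\lambda))$ is harmless because it is absorbed into the weaker error exponent $-\eps/2$.
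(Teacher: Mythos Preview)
Your proposal is correct and follows essentially the same route as the paper: both arguments substitute the crude bounds $|\omega_t|\le r^{t+1}$, $R\le\delta^2$, and $\bigl|\sum_j(d_j-d)^3\bigr|\le n\delta^3$ into the explicit expressions for $\varLambda_1,\varLambda_2$ from Theorem~\ref{inducedsubcount}, check term-by-term that each summand is $O(n^{-b})$ under the hypothesis $r^2(1+\delta^2/n)=O(\lambda^2(1-\lambda)^2 n^{1-b})$, and then absorb $\varLambda_0=O(n^{-\eps/2})$ using $r=O(n^{1/3-\eps})$. The paper organises the first check by grouping the summands into three displayed estimates rather than treating them one at a time, but the content is the same.
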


Note that assumption~\eqref{induced-assumption} is always satisfied
if $r\leq n^{1/3-\eps}$ and $\eta\geq\frac13$. The proofs of
Theorem~\ref{inducedsubcount} and 
Corollary~\ref{induced-corollary} are given in Section~\ref{s:induced}.

Xiao, Yan, Wu and Ren~\cite{XYWR2008} obtained a result overlapping the last part of
Corollary~\ref{induced-corollary} for the case that~$H$ has
constant size and regular $\dvec = (d,\ldots, d)$ with~$d=o(n)$.
The relationship between the two random graph models $\Gd$ and $\calG(n,\lambda)$  was also studied by 
Krivelevich, Sudakov, Vu and Wormald,
who established concentration near the mean
when $r = O(1)$ and $\dvec=(n/2,\ldots,n/2)$, see \cite[Corollary 2.11]{KSVW}.\
The following includes their result as a special case.

\begin{cor}\label{c:concentration}
For any constant $\eta\in(0,\frac12)$  and 
 every fixed 
$\eps \in (0,\eps_3(\eta)]$ the following holds, where 
$\eps_3(\eta) $ is  provided by  Theorem \ref{inducedsubcount}.
Define $\lambdamin=\min\{\lambda,1-\lambda\}$.
Suppose also that
\[   r \leq (2-\eps)\frac{\log n}{\log \lambdamin^{-1}}. \]
Then, $\E  \widetilde{N}_{\dvec}(H^{[r]})  \to\infty$   as $n \rightarrow \infty$, and 
\[
 \Prob\biggl(\,\biggl| \frac{\widetilde{N}_{\dvec}(H^{[r]})}  {\E \widetilde{N}_{\dvec}(H^{[r]})}  -1\biggr| 
    \geq n^{-\eps/6}+n^{-1/6+\eta/3}\biggr)  = O(n^{-\eps/6}+n^{-1/6+\eta/3}).
\]
\end{cor}

Since a clique is a subgraph if and only if it is an induced subgraph, 
we can use either Theorem \ref{subcount} or Theorem \ref{inducedsubcount} to estimate the expected number of $r$-cliques. 
Taking $H$ to be $K_r$ plus $n-r$ isolated vertices in Theorem~\ref{subcount},
 or $H^{[r]}=K_r$ in Theorem~\ref{inducedsubcount}, we obtain the following corollary.

\begin{thm}\label{Cor_cliques}
  For any constant $\eta\in(0,\frac12)$  there is some
$\eps_4(\eta) > 0$ such that the following holds for every fixed $\eps \in (0,\eps_4(\eta)]$.
Let $\dvec$ be a degree sequence which satisfies~\eqref{curlyA}.
 Then, as $n\rightarrow\infty$, for any positive integer $r$ such that
$r \leq n^{1/2+\eps}$ and 
$\delta^3 r^4/(\lambda^3 n^3)\leq n^{-1/2+\eta}$,
 the expected number of $r$-cliques in $G\in\Gd$  is
\[ 
   \binom{n}{r} \lambda^{\binom{r}{2}}
   \exp\left( - \frac{(1-\lambda) r^2(r-3)}{2\lambda n}+
    \frac{Rr^3}{ 2 \lambda^2 n^2} 
    -\frac{(1-\lambda)(2+5\lambda)r^4}{12 \lambda^2n^2}
    +  O(n^{-1/2+\eta})\right).
\]
\end{thm}
The formula for the number of independent subsets of size $r$
can be obtained from the formula given in Corollary \ref{Cor_cliques} 
by simply swapping the roles of $\lambda$ and $1-\lambda$.

\subsection{Outline of our approach}\label{s:approach}

Our proofs are based on the asymptotic enumeration results of McKay~\cite{ranx}.
To illustrate the nature of our task, the proof of Theorem~\ref{subcount}
relies on a theorem from~\cite{ranx}, here quoted as Theorem~\ref{treeprob},
that the probability of a subgraph $H$ appearing in a fixed location 
in $G\sim \calG_{\dvec}$ has the form
\begin{equation}\label{explain}
    \Prob(\dvec,H) = \lambda^m e^{F(\dvec,H) + o(1)}
\end{equation}
for a certain function~$F$.
In order to find the expectation of the number of all appearances of isomorphs
of $H$ as subgraphs, we need to sum $\Prob(\dvec,H')$ over all $H'\cong H$.
Clearly, this is equivalent to finding the expectation of
$e^{F(\dvec^\sigma,H) +o(1)}$, where $\dvec^\sigma$ is a uniformly
random permutation~$\sigma $
of the entries of~$\dvec$.

Since the function $F(\dvec,H)$ in~\eqref{explain} is too large to allow
useful expansion of the exponential, we must estimate $\E e^{F(\dvec^\sigma,H)}$
directly.
We do this by applying the theory of exponentials of martingales
developed in~\cite{mother}, which we summarize in Section~\ref{s:sums}.
In order to facilitate similar applications in the future, 
in Section~\ref{s:randomperm} we prove some general theorems about the
expectations of the exponentials of functions of random permutations.
Theorem~\ref{subcount} and its corollaries are  proved in 
Section~\ref{s:subgraph}.
The proof of Theorem \ref{inducedsubcount} is  given in 
Section \ref{s:induced}.  It follows by a similar argument starting from \cite[Theorem 2.4]{ranx}, which is quoted here as Theorem \ref{induced-prob}.

The first $k$ entries in a random permutation form a
random $k$-subset, so the same theorems can be used to estimate the expectations
of the exponentials of functions of random subsets, and thereby functions
of hypergeometric and multinomial distributions.  
We use this theory to prove Theorem \ref{treecount} in Section~\ref{s:dense},
as multinomial distributions appear naturally for counts of trees with given degrees.

\nicebreak
\section{Expectations of exponentials}\label{s:sums}

First, in Section~\ref{s:results} we review some notation and results from~\cite{mother}.
Then, in Section~\ref{s:randomperm} and~\ref{s:discrete},
 we prove some auxiliary results which will help us to apply
the machinery from~\cite{mother} in the discrete setting.

In this paper, we will only apply the machinery of this section to 
real-valued martingales.  However, 
the complex-valued discrete setting is also covered in this section, 
in order to provide bounds which may be useful for future applications. 
In particular, such bounds can be useful for determining asymptotic distributions
by analysis of the corresponding characteristic functions  (Fourier inversion).

Given a complex random variable $Z$, two types of squared variation
are commonly defined.
The variance is
\begin{align*}
\Var Z &= \E\,\abs{Z-\E Z}^2 = \E\,\abs{Z}^2 - \abs{\E Z}^2
=\Var\Re Z+\Var\Im Z, \\
\intertext{while the pseudovariance is}
\V Z &= \E\,(Z-\E Z)^2 = \E Z^2 - (\E Z)^2
= \Var\Re Z - \Var\Im Z + 2i\myCov{\Re Z,\Im Z}.
\end{align*}
We will need both. Of course, they are equal for real random variables.

\subsection{Complex martingales}\label{s:results}

Let $\P=(\varOmega,\calF,\Prob)$ be a probability space.
A sequence $\F = \calF_0,\ldots,\calF_n$ of $\sigma$-subfields
of $\calF$ is a \textit{filter} if $\calF_0\subseteq\cdots\subseteq\calF_n$.
A sequence $Z_0,\ldots,Z_n$ of random variables on $\P=(\varOmega,\calF,\Prob)$
is a \textit{martingale with respect to $\F$} if
\begin{itemize}\itemsep=0pt
\item[(i)] $Z_j$ is $\calF_j$-measurable and has finite expectation, for $j=0,\ldots, n$;
\item[(ii)] $\myE{Z_j \st \calF_{j-1}} = Z_{j-1}$ for $j=1,\ldots, n$.
\end{itemize}
Observe that $Z_j=\myE{Z_n \st \calF_j}$ a.s.\ for each~$j=0,\ldots, n$.

Let $Z$ be a random variable on $\P$. We use the following notation
for statistics conditional on $\calF_j$, for $j=0,\ldots, n$:
\begin{align*}
  \E_j Z &= \myE{Z\st\calF_j}, \\
  \V_j  Z &= \myE{(Z-\E_jZ)^2\st\calF_j}
           = \E_j Z^2 - (\E_j Z)^2, \\
  \Diam_j Z &= \myDiam{Z\st\calF_j}.
\end{align*}
Here the \textit{conditional diameter} of $Z$ with respect to $\sigma$-subfield $\calF'$ of $\calF$ is defined as 
\begin{equation}\label{diam3}
   \myDiam {Z\st\calF'} = \sup_{\theta\in (-\pi,\pi]}\, 
     \Bigl[\mYesssup{\Re(e^{-i\theta}Z)\st\calF'}
         + \mYesssup{-\Re(e^{-i\theta}Z)\st\calF'} \Bigr],
\end{equation}
where the conditional essential supremum of a real random
variable $X$ with $\abs X\leq c$ a.s.~can be defined (see \cite{Barron}) by
\[ 
  \myesssup {X\st\calF'} = -c + \lim_{r\to\infty} \(\myE{(X+c)^r\st\calF'}\)^{1/r}.
\]
When $Z$ is real, we can restrict~\eqref{diam3} to $\theta=0$
and then $\myDiam{Z\st\calF'}$ is the same as the conditional
range defined by McDiarmid~\cite{McDiarmid}. 
In the trivial case $\calF' = \{\emptyset, \varOmega\}$, the 
(unconditional) diameter can be alternatively  defined by
\begin{equation}\label{diam}
   \Diam Z = \myDiam{Z\st\calF'} = \esssup\,\abs{Z-Z'}, 
   \quad\text{where $Z'$ is an independent copy of~$Z$}.
\end{equation}
For more information about conditional essential supremum and conditional diameter, 
see, for example, \cite{Barron} and \cite[Section 2.1]{mother}. 
We will use the fact that the diameter and conditional diameter are seminorms and
so, in particular, they are subadditive.

The following first-order and second-order estimates were proved
in~\cite[Theorem 2.7 and Theorem 2.9]{mother}, and are stated below
for convenience.

\begin{thm}\label{goodthm}
	Let $\Z = Z_0,Z_1,\ldots, Z_n$ be an a.s.~bounded complex-valued 
	martingale with respect to a filter $\calF_0,\ldots, \calF_n$.
	For $j=1,\ldots, n$, define
	\begin{subequations}\label{T2cond}
		\begin{align*}
			R_j = \Diam_{j-1} Z_j, \ \ \ \
                         Q_j = \max \bigl\{\Diam_{j-1}\E_j (Z_n - Z_{j})^2 ,
                		 \Diam_{j-1} \E_j(\Re Z_n - \Re Z_{j})^2 \,\bigr\}.
		\end{align*}
	\end{subequations}
	Then the following estimates hold. 
\begin{itemize}\itemsep=0pt
 \item[(a)]   $\E_0 e^{Z_n}   = e^{Z_0}(1 + K(\Z))$,
where $K(\Z)$ is an $\calF_0$-measurable random variable with
\[
  \abs{K(\Z)}  \leq \mYesssup{ e^{\frac18\sum_{j=1}^n R_j^2} \St \calF_0}
      - 1 \ \text{ a.s.}
\]
\item[(b)] 
	$
		\E_0 e^{Z_n} = e^{Z_0 + \frac{1}{2} \V_0 Z_n} \(1 + L(\Z) e^{\frac{1}{2} \V_0[\Im Z_n]}\), 
	$
		where $L(\Z)$  is an 
	$\calF_0$-measurable random variable with 
	\[
		\abs{L(\Z)} \leq
	\MYesssup{
		\exp \Bigl( \,
\sum_{j=1}^n \(\tfrac{1}{6} R_j^3 + \tfrac{1}{6}R_jQ_j + \tfrac{5}{8} R_j^4 + \tfrac{5}{32} Q_j^2\)
		\Bigr) \biggm| \calF_0} -1 \ \text{ a.s.}
	\]
	\end{itemize}
\end{thm}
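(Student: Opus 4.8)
This is~\cite[Theorems~2.7 and~2.9]{mother}, so within the present paper nothing is needed beyond the citation; for orientation I sketch the underlying argument. Both parts are proved by a martingale telescoping. Put $W_j=Z_j-Z_{j-1}$, so that $\E_{j-1}W_j=0$ and, since $Z_{j-1}$ is $\calF_{j-1}$-measurable, $\Diam_{j-1}W_j=\Diam_{j-1}Z_j=R_j$ with $R_j$ being $\calF_{j-1}$-measurable; factor $e^{Z_n}=e^{Z_0}\prod_{j=1}^n e^{W_j}$. One reduces each estimate to a single-step bound on $\E_{j-1}e^{W_j}$ and propagates it by a downward induction on~$j$, peeling off one factor at a time. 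The $\calF_0$-measurable essential-supremum bounds in the statement arise because, passing from $\calF_j$ to $\calF_{j-1}$, an $\calF_j$-measurable inductive bound must be replaced by its $\calF_{j-1}$-conditional essential supremum, for which the tower identity $\esssup(\,\cdot\mid\calF_{j-1})=\esssup\bigl(\esssup(\,\cdot\mid\calF_j)\mid\calF_{j-1}\bigr)$ is used; one also uses that the conditional diameter is subadditive and dominates the conditional range of $\Re(e^{-i\theta}Z)$ for every~$\theta$.

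\emph{Part~(a).} The single-step input is a complex Hoeffding lemma: if $\E(W\mid\calF')=0$, then $\abs{\E(e^{W}\mid\calF')-1}\le e^{\frac18\Diam(W\mid\calF')^2}-1$. The starting point is the elementary bound $\abs{\E(e^{W}\mid\calF')}\le\E(e^{\Re W}\mid\calF')$, obtained by expanding $\abs{\E e^W}^2$ as an expectation involving an independent copy of~$W$ and applying the triangle inequality; this reduces matters to the real mean-zero variable $\Re W$, whose conditional range is at most $\Diam(W\mid\calF')$, and the convexity computation behind the classical Hoeffding lemma supplies the constant~$\tfrac18$, with a refinement giving the sharper bound on $\abs{\,\cdot\,-1}$. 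Feeding this in, one proves by downward induction that $\abs{\E_j e^{Z_n-Z_j}-1}\le\esssup\bigl(e^{\frac18\sum_{k=j+1}^n R_k^2}\mid\calF_j\bigr)-1$; in the inductive step one writes $\E_{j-1}e^{Z_n-Z_{j-1}}=\E_{j-1}\bigl[e^{W_j}\,\E_j e^{Z_n-Z_j}\bigr]=\E_{j-1}e^{W_j}+\E_{j-1}\bigl[e^{W_j}\,(\E_j e^{Z_n-Z_j}-1)\bigr]$, bounds the first summand by the complex Hoeffding lemma, bounds the second by $e^{\frac18 R_j^2}$ (Hoeffding for $\Re W_j$) times the $\calF_{j-1}$-conditional essential supremum of the right-hand side of the inductive hypothesis, and combines the two using the $\calF_{j-1}$-measurability of~$R_j$ to pull $e^{\frac18 R_j^2}$ inside the essential supremum. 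Taking $j=0$ yields the claim with $K(\Z)=\E_0 e^{Z_n-Z_0}-1$.

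\emph{Part~(b).} The template is identical but the single-step bound is now of second order. Because $\E_{j-1}W_j=0$ puts the origin in the conditional convex hull of the support of~$W_j$, one has $\abs{W_j}\le R_j$ a.s., so a third-order Taylor expansion of $e^{W_j}$ gives $\E_{j-1}e^{W_j}=e^{\frac12\V_{j-1}W_j}\bigl(1+O(R_j^3+R_j^4)\bigr)$. Writing $\V_j Z_n=\E_j(Z_n-Z_j)^2$ and using the identity $\V_j Z_n=\E_j(Z_n-Z_{j-1})^2-W_j^2$, the peeling step $\E_{j-1}\bigl[e^{W_j}\sigma_j\bigr]$ applied to $\sigma_j=\E_j e^{Z_n-Z_j}\approx e^{\frac12\V_j Z_n}$ converts into $\sigma_{j-1}\approx e^{\frac12\V_{j-1}Z_n}$, so that after $n$ steps the factor $e^{\frac12\V_0 Z_n}$ is extracted. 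The new feature, absent in part~(a), is that the ``remaining pseudovariance'' $\E_k(Z_n-Z_j)^2$ is itself random and varies from step to step; its conditional fluctuation is exactly what the quantity $Q_j$ measures, and carrying the correction $e^{\frac12\V_0 Z_n}$ through the telescope accounts for the extra error contributions $\tfrac16 R_jQ_j+\tfrac5{32}Q_j^2$ alongside the Taylor terms $\tfrac16 R_j^3+\tfrac58 R_j^4$. Finally, since $\V$ is the \emph{pseudo}variance, the main term $e^{Z_0+\frac12\V_0 Z_n}$ has modulus $e^{\Re Z_0+\frac12(\Var_0\Re Z_n-\Var_0\Im Z_n)}$, whereas the error terms accumulated in the telescope are naturally controlled against the true growth rate $e^{\Re Z_0+\frac12\Var_0\Re Z_n}$; the ratio of the two is the factor $e^{\frac12\Var_0\Im Z_n}=e^{\frac12\V_0(\Im Z_n)}$ that multiplies $L(\Z)$ in the statement, and collecting the constants from the Taylor remainders and the $Q_j$ terms gives the stated bound on $\abs{L(\Z)}$.

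The main obstacle is the accounting in part~(b): one must propagate a \emph{random}, complex, Gaussian-type correction through $n$ dependent steps while keeping every error contribution uniformly controlled by a single $\calF_0$-conditional essential supremum, and one must isolate precisely those combinations of $R_j$ and $Q_j$ that make the induction close with third-order error. By contrast, part~(a) and the complex Hoeffding lemma behind it are essentially routine once the statements are correctly formulated.
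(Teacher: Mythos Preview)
Your proposal is correct and matches the paper exactly: the paper does not prove this theorem but simply cites it as \cite[Theorem~2.7 and Theorem~2.9]{mother}, stating it ``for convenience.'' Your orientation sketch of the martingale telescoping argument is reasonable and goes beyond what the paper provides, but within this paper the citation alone suffices.
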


The following lemma, proved in~\cite[Lemma 2.8]{mother}, is useful for bounding the quantities~$Q_j$ when
applying Theorem~\ref{goodthm}(b).

\begin{lemma}\label{telescope}
Under the conditions of Theorem~\ref{goodthm}, we have
\begin{align*}
   \E_j (Z_n-Z_j)^2 =
   \sum_{k=j+1}^n \E_j(Z_k-Z_{k-1})^2
\end{align*}
for $0\leq j\leq n$.
\end{lemma}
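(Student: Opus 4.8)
The plan is to reduce the identity to the orthogonality of martingale differences. Write $D_k = Z_k - Z_{k-1}$ for $k = 1,\ldots,n$, so that $Z_n - Z_j = \sum_{k=j+1}^n D_k$ by telescoping and, expanding the square,
\[
  (Z_n - Z_j)^2 = \sum_{k=j+1}^n D_k^2 + 2\sum_{j+1\le k<l\le n} D_k D_l .
\]
Since $\Z$ is a.s.\ bounded, every term here is a bounded random variable, so all the conditional expectations below are finite and we may freely interchange $\E_j$ with the (finite) sums and use the tower property $\E_j\bigl(\E_l(\,\cdot\,)\bigr) = \E_j(\,\cdot\,)$ for $j \le l$.

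The key step is to show that each cross term vanishes after conditioning on $\calF_j$: for $j+1 \le k < l \le n$ we have $\E_j(D_k D_l) = 0$. Indeed, $D_k = Z_k - Z_{k-1}$ is $\calF_k$-measurable, hence $\calF_{l-1}$-measurable because $k \le l-1$; and $\E_{l-1}(D_l) = \E(Z_l\st\calF_{l-1}) - Z_{l-1} = 0$ by the martingale property. Therefore $\E_{l-1}(D_k D_l) = D_k\,\E_{l-1}(D_l) = 0$, and applying $\E_j$ together with $j \le l-1$ gives $\E_j(D_k D_l) = \E_j\bigl(\E_{l-1}(D_k D_l)\bigr) = 0$. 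Taking $\E_j$ of the displayed expansion then yields
\[
  \E_j\bigl((Z_n-Z_j)^2\bigr) = \sum_{k=j+1}^n \E_j(D_k^2) = \sum_{k=j+1}^n \E_j\bigl((Z_k-Z_{k-1})^2\bigr),
\]
as claimed.

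I do not expect any genuine obstacle here: the argument is purely algebraic and uses only the defining martingale property together with a.s.\ boundedness for integrability. The one point worth noting is that, although for complex-valued martingales one usually works with $\abs{Z_n - Z_j}^2$, the statement concerns the unconjugated square $(Z_n - Z_j)^2$ (the pseudovariance-type quantity), and the same computation goes through verbatim, since no complex conjugation is ever needed — the cancellation of cross terms comes from $\E_{l-1}(D_l) = 0$ alone.
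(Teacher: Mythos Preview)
Your proof is correct; it is the standard orthogonality-of-martingale-differences argument, and your remark about the unconjugated square going through is apt. The paper does not actually prove this lemma but merely cites it from~\cite[Lemma~2.8]{mother}, so there is no in-paper argument to compare against; your proof is exactly the expected one.
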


An important example of a martingale is made by the Doob martingale process. Suppose 
$\X=(X_1,\ldots,X_n)$ is a random vector on $\P$ and $f(\X)$ is 
a complex random variable of bounded expectation. Consider the filter $\calF_0,\ldots,\calF_n$ defined by
$\calF_j = \sigma(X_1,\ldots,X_j)$, where $\sigma(X_1,\ldots,X_j)$ denotes the $\sigma$-field 
generated by the random variables $X_1,\ldots, X_j$. In particular, $\calF_0 =\{\emptyset, \varOmega\}$ and $\E_0$ is
the ordinary expectation.
Then we have the martingale 
\[\hspace*{3cm} Z_j = \myE {f(X_1,\ldots,X_n) \st \calF_j}, \hspace*{3cm} j=0,\ldots,n. \]
It was shown in \cite[Lemma 3.1]{mother} that for this case 
the conditional diameter satisfies the following property:
\begin{equation}
\begin{aligned}
\text{
$\Diam_j f(\X)$ has the same distribution as 
$\delta_j (X_1,\ldots,X_j)$, where}\\
	\delta_j (x_1,\ldots,x_j)      = 
	\mYDiam{f(x_1,x_2,\ldots, x_j, X_{j+1}, \ldots, X_n)}.
\end{aligned}
\label{henry}
\end{equation}
Here the variables $X_j, \ldots, X_n$ are random and $x_1,\ldots,x_j$ are fixed.

\subsection{Random permutations}\label{s:randomperm}

Let $S_n$ denote the set of permutations of $\{1,\ldots, n\}$. 
We will write a permutation as a vector: if $\omega\in S_n$ maps $j$ to $\omega_j$
for $j=1,\ldots, n$ then we write $\omega = (\omega_1,\omega_2,\ldots, \omega_n)$.
For any $\omega,\sigma\in S_n$, define
\[ \omega\circ \sigma = (\omega_{\sigma_1}, \ldots, \omega_{\sigma_n}).\]
That is, $\sigma$ acts on $\omega$ on the right by permuting the \emph{positions}
of $\omega$, not the values.  

Now suppose $\X=(X_1,\ldots,X_n)$ is a uniformly random 
element of $S_n$. 
Although the random variables $X_1, \ldots, X_n$ are dependent,  the Doob martingale process is still applicable:
for a given permutation $\omega=(\omega_1,\ldots \omega_n)  \in S_n$  and the function $f: S_n \to \Complexes$  define
\begin{equation}\label{Doob_permutations}
	Z_k(\omega) = \myE{f(\X) \st X_j=\omega_j, \ 1\leq j\leq k}.
\end{equation}
The sequence $Z_0(\X), Z_1(\X), \ldots, Z_n(\X)$ is a martingale
with respect to the filter $\calF_0, \allowbreak\ldots, \calF_n$,
where for each $k$, the $\sigma$-field
$\calF_k$
is generated by the sets 
\[
  \varOmega_{k,\sigma} = \{\omega \in S_n \st \omega_j=\sigma_j, \ 1\leq j\leq k\}
\]
for all  
$k$-tuples $(\sigma_1,\ldots, \sigma_k)$ with distinct components.
From now on we simply write $Z_k$ instead of $Z_k(\X)$, for $k=0,\ldots, n$.

Since $Z_n=Z_{n-1}$ and $\calF_n=\calF_{n-1}$, we will find it convenient
to stop the martingale at $Z_{n-1}$.	
In the following we will  use the notations of Section~\ref{s:results} for statistics
	conditional on~$\calF_k$. 

Given a function $f:S_n\to \Complexes$, we  use the infinity norm
\[ \| f\| = \max_{\omega\in S_n} | f(\omega)|.\]
For any $j,a\in \{1,\ldots, n\}$, and any $\omega\in S_n$, define
\[
  D^{(j\, a)} f(\omega) = f(\omega) - f(\omega\circ(j\, a)). 
\]
Here $(j\, a)\in S_n$ is the transposition which exchanges  $j$ and $a$.
Now, let
\begin{align*}
 \hspace*{8mm} \alpha_j[f,S_n] &= \frac{1}{n-j} \sum_{a=j+1}^n \|  D^{(j\, a)} f\|,
           \hspace*{37mm} \qquad (1\leq j\leq n-1) \\
 \hspace*{8mm} \varDelta_{jk}[f,S_n] &= \frac{1}{(n-j)(n-k)}
                             \sum_{a=j+1}^n\,\sum_{b=k+1}^n 
  \| D^{(k\, b)} D^{(j\, a)} f\|,
           \hspace*{8mm}  (1\leq j\neq k \leq n-1).
\end{align*}
Note that the parameters $\alpha_j$ and $\varDelta_{jk}$ satisfy the triangle inequality:
\begin{equation}\label{triangle_aD}
  \begin{aligned}
\alpha_j[f+f',S_n] &\leq \alpha_j[f,S_n] + \alpha_j[f',S_n], \\ \varDelta_{jk}[f+f',S_n] &\leq \varDelta_{jk}[f,S_n] +
\varDelta_{jk}[f',S_n].
   \end{aligned}
\end{equation}

The following lemma provides bounds on the quantities that arise in Theorem~\ref{goodthm}. 

\begin{lemma}
\label{Lemma_Perm}
Let $\X=(X_1,\ldots,X_n)$ be a uniformly random element of $S_n$. 
Let $f: S_n \rightarrow \Complexes$ and let
$(Z_0, Z_1, \ldots, Z_{n-1})$ be the Doob martingale sequence given by 
\emph{\eqref{Doob_permutations}}.
Write $\alpha_k = \alpha_k[f,S_n]$ and $\varDelta_{jk} = \varDelta_{jk}[f,S_n]$. Then
\begin{align}
	\Diam_{j-1} Z_j &\leq \alpha_j,  & 1 \leq j \leq n-1, \label{Perm_alpha}\\ 
	\Diam_{j-1} \E_j (Z_k-Z_{k-1})^2 &\leq 2 \alpha_k \varDelta_{jk}, & 1 \leq j<k \leq n-1. 
	\label{Perm_Delta}
\end{align}
\end{lemma}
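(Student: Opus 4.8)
The plan is to establish both bounds by reducing the quantities that appear in Theorem~\ref{goodthm} to combinatorial differences of $f$ under transpositions, using the structural description of the filter $\calF_k$ together with the distributional identity~\eqref{henry} for the conditional diameter of a Doob martingale. The key observation is that conditioning on $\calF_k$ fixes the values $\omega_1,\ldots,\omega_k$ and leaves the restriction of the random permutation to positions $k+1,\ldots,n$ uniform over the remaining values; equivalently, one may generate the whole permutation by first fixing a uniformly random arrangement and then applying a uniformly random transposition to incorporate the choice at position~$j$.

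For~\eqref{Perm_alpha}: by~\eqref{henry}, $\Diam_{j-1}Z_j$ has the same distribution as $\Diam\(Z_j(\omega_1,\ldots,\omega_{j-1},X_j,\ldots,X_n)\)$ with $\omega_1,\ldots,\omega_{j-1}$ fixed. Since $\calF_j$ refines $\calF_{j-1}$ only by revealing the value placed in position~$j$, the passage from $Z_{j-1}$ to $Z_j$ amounts to choosing which of the $n-j+1$ still-available values sits in position~$j$; any two such choices differ by a transposition $(j\,a)$ applied to positions. Averaging $f$ over the completions and using subadditivity of the diameter (seminorm property), $\Diam_{j-1}Z_j$ is bounded by the average over $a$ of $\|D^{(j\,a)}f\|$ restricted to the relevant block, which is at most $\alpha_j$. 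The factor $\frac{1}{n-j}$ matches the number of transpositions needed to move among the $n-j$ alternative values relative to a fixed reference.

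For~\eqref{Perm_Delta}: here I would first apply Lemma~\ref{telescope} conceptually, but more directly expand $Z_k-Z_{k-1}$ as a centered single-step increment, writing it (after conditioning on $\calF_{k-1}$) as an average of terms $D^{(k\,b)}g$ where $g$ is $f$ averaged over positions $k+1,\ldots,n$. Squaring gives a double sum over $b,b'$; taking $\Diam_{j-1}$ of this and using that the diameter of a product is controlled by $\|\cdot\|$ times the diameter of one factor (since $\Diam(UV)\le \|U\|\Diam V + \|V\|\Diam U$ and both factors here are increments), the $\|\cdot\|$ part contributes a factor bounded by $\alpha_k$ and the remaining $\Diam_{j-1}$ acts on a single increment $D^{(k\,b)}g$, which by the same reasoning as above (now revealing position~$j$, $j<k$) is bounded by an average of $\|D^{(j\,a)}D^{(k\,b)}f\|$, i.e.\ by $\varDelta_{jk}$. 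Collecting the constants yields the factor $2\alpha_k\varDelta_{jk}$, the~$2$ coming from the two cross terms in $(Z_k-Z_{k-1})^2 = (\E_k f - \E_{k-1}f)^2$ when bounding the diameter of the square.

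The main obstacle is the bookkeeping in the second bound: one must carefully track how conditioning on $\calF_{j-1}$ (revealing position~$j$) interacts with the single-step increment at position~$k$, making sure that the averaging over the ``tail'' positions $\{k+1,\ldots,n\}$ and the averaging over the choice at position~$j$ commute appropriately and that the resulting quantities really are the averaged second differences $D^{(j\,a)}D^{(k\,b)}f$ rather than something with the wrong normalization. Getting the constant exactly $2$ (as opposed to a larger absolute constant) requires using the specific bound $\Diam(Z^2\st\calF')\le 2\|Z-\E'Z\|\,\Diam(Z\st\calF')$-type inequality for the centered increment, together with $\|Z_k-Z_{k-1}\|\le\alpha_k$ inherited from part~\eqref{Perm_alpha}. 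I expect the transposition-decomposition of ``revealing one more coordinate of a uniform permutation'' to be the conceptual crux, after which everything is an exercise in the seminorm properties of $\Diam$ recorded just before the lemma.
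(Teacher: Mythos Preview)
Your outline for~\eqref{Perm_alpha} is correct and matches the paper: the conditional diameter at step~$j$ is the maximum over two values $\sigma_j,\sigma_j'$ of the difference of conditional expectations, and this difference is realised by the coupling that swaps $\sigma_j$ with whatever position currently holds $\sigma_j'$; averaging over that position gives the factor $\frac{1}{n-j}\sum_{a>j}\|D^{(j\,a)}f\|=\alpha_j$.

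For~\eqref{Perm_Delta} your strategy is also the paper's strategy, but your sketch of the crucial step is not right, and this is where the real work lies. Two points:

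\emph{First}, the clean way to handle the combination $\Diam_{j-1}\E_j$ is not a product rule for conditional diameters but rather to apply part~\eqref{Perm_alpha} directly to the function $\tilde f(\omega)=(Z_k(\omega)-Z_{k-1}(\omega))^2$, giving $\Diam_{j-1}\E_j\tilde f\le\alpha_j(\tilde f,S_n)=\frac{1}{n-j}\sum_{a>j}\|D^{(j\,a)}\tilde f\|$. The difference-of-squares factorisation of $D^{(j\,a)}\tilde f$ then yields the factor $2\alpha_k$ from one factor and leaves the other factor
\[
   \bigl|Z_k(\omega)-Z_{k-1}(\omega)-Z_k(\omega\circ(j\,a))+Z_{k-1}(\omega\circ(j\,a))\bigr|
\]
to be bounded and summed over~$a$.

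\emph{Second}, this last quantity is \emph{not} simply an average of $\|D^{(j\,a)}D^{(k\,b)}f\|$ over~$b$, as you suggest. One must split according to whether the swap position $a$ lies in $\{j+1,\ldots,k\}$ (already revealed at level~$k$) or in $\{k+1,\ldots,n\}$ (not yet revealed). In the latter case the coupling that compares the four conditional expectations forces a \emph{double} average over pairs $(b,c)$ with $k\le b\neq c\le n$, producing terms $\|D^{(k\,b)}D^{(j\,c)}f\|$ with $c$ ranging freely rather than fixed at~$a$. The two cases are then consolidated, and it is only after this consolidation that the combined sum over $a$ and $b$ collapses to $(n-j)\varDelta_{jk}$. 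Your proposal does not anticipate this case split or the appearance of the extra index~$c$, and without it the normalisations will not match the definition of~$\varDelta_{jk}$.
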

\begin{proof}
Firstly, observe that $Z_j$ can be represented by a function of $j$ arguments:   
\[ 
 Z_j(\omega) = f_j(\omega_1,\ldots, \omega_j), \ \ \ \text{  } \omega \in S_n.
\]
Recalling (\ref{diam}) and (\ref{henry}), we have 
\[
	\Diam_{j-1} Z_j   = \max\, \Abs{f_j(\sigma_1,\ldots,\sigma_j) - f_j(\sigma_1,\ldots,\sigma_{j-1}, \sigma_j')},
\]
where the maximum is taken over all $j$-tuples $(\sigma_1,\ldots,\sigma_j)$ 
with distinct components, and $\sigma_j' \neq \sigma_1,\ldots, \sigma_{j-1}$. 
By definition of $f_j$ and $Z_j$, we have
\begin{align*}
 &\Abs{f_j(\sigma_1,\ldots, \sigma_j)   - f_j(\sigma_1,\ldots, \sigma_{j-1},\sigma'_j)} \notag\\
  &= \Abs{ \myE{f(\X) \mid  X_1 = \sigma_1,\ldots, X_j = \sigma_j} - 
         \myE{f(\X)\mid X_1 = \sigma_1,\ldots, X_{j-1} = \sigma_{j-1},\, X_j = \sigma'_j}}\notag\\
 &= \biggl|\, \frac{1}{n-j} \sum_{a=j+1}^n \mYE{ D^{(j\, a)}f(\X) \St
   X_1 = \sigma_1,\ldots, X_j = \sigma_j,\,\, X_a = \sigma'_j} \,\biggr|,
\end{align*}
since $\sigma'_j$ must occupy some position $a\in \{ j+1,\ldots, n\}$ in $\sigma$, and by symmetry each
possibility is equally likely.  Therefore
\begin{equation}
 |f_j(\sigma_1,\ldots, \sigma_j)   - f_j(\sigma_1,\ldots, \sigma_{j-1},\sigma'_j)| 
  \leq \frac{1}{n-j}\, \sum_{a=j+1}^n \| D^{(j\, a)} f\| 
  =\alpha_j,  \label{double-dagger}
\end{equation}
which implies the bound \eqref{Perm_alpha} for  $\Diam_{j-1} Z_j$. 

Now we proceed to the bound for $\Diam_{j-1} \E_j (Z_k-Z_{k-1})^2$.
Define  $\Tilde{f}: S_n \rightarrow \Complexes$ by
\[
   \Tilde{f}(\omega) = (Z_k(\omega) - Z_{k-1}(\omega))^2  
   = (f_k(\omega_1,\ldots,\omega_k)-f_{k-1}(\omega_1,\ldots,\omega_{k-1}))^2.
\]
Since $D^{(j\, a)} \Tilde{f}(\omega)$ is the difference of 
two squares, we have
\begin{align*}
 D^{(j\, a)} \Tilde{f}(\omega) 
      &= \Tilde{f}(\omega) - \Tilde{f}(\omega\circ (j\, a))\\
 &= \(Z_k(\omega) - Z_{k-1}(\omega) + Z_k(\omega\circ (j\, a)) -
     Z_{k-1}(\omega\circ (j\, a))\)\\
 & {\qquad} 
 \times \(Z_k(\omega) - Z_{k-1}(\omega) - Z_k(\omega\circ (j\, a))
     + Z_{k-1}(\omega\circ (j\, a))\).
\end{align*}
Using \eqref{Perm_alpha} applied to $f$, we have
$\Abs{Z_k(\omega) - Z_{k-1}(\omega)} \leq \Diam_{k-1} Z_k$
and hence
\[
 \Abs{Z_k(\omega) - Z_{k-1}(\omega) + Z_k(\omega\circ (j\, a))
        - Z_{k-1}(\omega\circ (j\, a))} \leq 2\,\Diam_{k-1} Z_k \leq 2\alpha_k.
\]
Therefore, applying \eqref{Perm_alpha} to $\Tilde{f}$ gives
\begin{align}
	 & \Diam_{j-1} \E_k(Z_k- Z_{k-1})^2  \notag\\
        &{\qquad} \leq \alpha_j[\Tilde{f}, S_n]
	 =  \frac{1}{n-j}
	\sum\limits_{a=j+1}\limits^n  \| D^{(j\, a)} \Tilde{f}\, \|  \notag\\
	 &{\qquad} \leq \frac{2\alpha_k }{n-j}\,
	 \sum\limits_{a=j+1}\limits^n \max\limits_{\omega \in S_n } \,
	 \Abs{ Z_k(\omega)- Z_{k-1}(\omega) - Z_k(\omega{\circ(j\, a)})
	    + Z_{k-1}(\omega{\circ(j\, a)})}.
\label{reduced}
\end{align}
In the remainder of the proof we work towards an upper bound on the summand.

For any $c\in \{1,\ldots, n\}$
and any permutation $(k\, b)$, with $1\leq k \leq b\leq n$
(either a transposition or the identity permutation),
write $c^{(k\, b)}$ for the image of $c$ under the action of $(k\, b)$.
Given $k\in \{1,\ldots, n\}$, define the set
\[ I_k = \{ (b,c)\mid k\leq b\neq c\leq n\}\]
of distinct ordered pairs with both entries at least $k$.

Now we consider two cases.  
\bigskip

\noindent {\bf Case 1.}\ Firstly, suppose that $a\in \{ k+1,\ldots, n\}$.
To begin, observe that 
\begin{align}
  Z_{k-1}(\omega) &= \frac{1}{(n-k)(n-k+1)} \notag \\
  & \quad {} \times  \sum_{(b,c)\in I_k} 
      \mYE{f(\X)\St X_1 = \omega_1,\ldots, X_{k-1} = \omega_{k-1},
              X_b = \omega_k,\, X_c = \omega_a} \label{case1a}
\end{align}
using arguments similar to those which led to (\ref{double-dagger}). 
Next, let
$\Tilde{\X} = \X\circ (k\, b)$, which is also a uniformly random element
of $S_n$, and write
\begin{align*}
 \mYE{f(\X\circ (k\, b))\St X_1 = &\omega_1,\ldots, X_{k-1}=\omega_{k-1},\,
   X_b = \omega_k,\, X_c = \omega_a} \\
 &= \mYE{f(\Tilde{\X})\St \Tilde{X}_1 = \omega_1,\ldots, \Tilde{X}_k =
   \omega_k,\, \Tilde{X}_{c^{(k\, b)}} = \omega_a}.
\end{align*}
Note that $c' = c^{(k \, b)}$ ranges over $\{ k+1,\ldots, n\}$ as $c$ ranges
over $\{ k,\ldots, n\}\setminus \{ b\}$. 
Therefore
\begin{align}
&\frac{1}{(n-k)(n-k+1)}
   \sum_{(b,c)\in I_k}\! \mYE{f(\X\circ (k\, b))\St X_1 = \omega_1,\ldots, 
   X_{k-1}=\omega_{k-1},\, X_b = \omega_k,\, X_c = \omega_a} \notag\\
 &\quad = \frac{1}{(n-k)(n-k+1)}\, \sum_{b=k}^n \sum_{c' = k+1}^n \, 
  \mYE{f(\Tilde{\X})\mid \Tilde{X}_1 = \omega_1,\ldots, \Tilde{X}_k =
   \omega_k,\, \Tilde{X}_{c'} = \omega_a} \label{midway}\\
 &\quad = \frac{1}{n-k}\, \sum_{c'=k+1}^n \mYE{f(\Tilde{X})\St \Tilde{X}_1 = \omega_1,
  \ldots, \Tilde{X}_k = \omega_k,\, \Tilde{X}_{c'} = \omega_a} \notag\\
  &\quad = \mYE{f(\Tilde{\X})\St \Tilde{X}_1 = \omega_1,\ldots,
   \Tilde{X}_k = \omega_k} = Z_k(\omega),\label{case1b}
\end{align}
similarly to (\ref{double-dagger}), since the summand in (\ref{midway})
is independent of $b$.

Arguing as above with $\Tilde{\X} = \X \circ (j \, c)$ gives
\begin{align}
 Z_{k-1}&(\omega\circ (j\, a)) \notag \\
  &= f_{k-1}(\omega_1,\ldots, \omega_{j-1}, \omega_a, \omega_{j+1},\ldots, \omega_{k-1})\notag \\
  &= \frac{1}{(n-k)(n-k+1)} \notag \\
  & \quad {~}\times \sum_{(b,c)\in I_k}
  \mYE{ f(\X\circ (j\, c))\St X_1 = \omega_1,\ldots, X_{k-1} = \omega_{k-1},\,
   X_b = \omega_k,\, X_c = \omega_a}.
 \label{case1c}
\end{align}

Finally, let $\Tilde{\X} = \X \circ (j\, c)\circ (k\, b)$, which is a uniformly
random element of $S_n$, and recall that $c^{(k\, b)}$ ranges over $\{ k+1,\ldots, n\}$
as $c$ runs over $\{ k,\ldots, n\} \setminus \{ b\}$.  Arguing as above gives 
\begin{align}
  & \frac{1}{(n-k)(n-k+1)} \notag \\
  & {~~} \times \sum_{(b,c)\in I_k}
  \mYE{ f(\X\circ (j\, c)\circ (k\, b))\St X_1 = \omega_1,\ldots, X_{k-1} = \omega_{k-1},\,
   X_b = \omega_k,\, X_c = \omega_a}\notag \\
 &= \frac{1}{(n-k)(n-k+1)}\notag \\
 & {~~}\times \sum_{(b,c)\in I_k} \E\bigl[ f(\Tilde{\X}) \St \Tilde{X}_1 = \omega_1,
  \ldots, \Tilde{X}_{j-1} = \omega_{j-1},\, \Tilde{X}_j = \omega_a,
  \, \Tilde{X}_{j+1} = \omega_{j+1},\ldots,
 \notag \\[-1.5ex]
  &  \hspace*{19em}
   \Tilde{X}_{k-1} = \omega_{k-1}, \Tilde{X}_k = \omega_k,\,
   \Tilde{X}_{c^{(k\, b)}} = \omega_j\bigr]\notag \\
 &= \frac{1}{n-k+1}\, \sum_{b=k}^n Z_k(\omega\circ (j\, a))
 = Z_k(\omega\circ (j\, a)).
\label{case1d}
\end{align}

\medskip
\noindent Combining (\ref{case1a})--(\ref{case1d}) together, we find that when $a\in \{ k+1,\ldots, n\}$,
\begin{align}
\Abs{Z_k&(\omega) - Z_{k-1}(\omega) - Z_k(\omega\circ (j\, a))  +
    Z_{k-1}(\omega\circ (j\, a))} \notag\\
  &= \frac{1}{(n-k)(n-k+1)}\notag \\
  & \quad {} \times \sum_{(b,c)\in I_k} \mYE{ D^{(k\, b)} D^{(j\, a)} f(\X) \St X_1 = \omega_1,\ldots,
          X_{k-1} = \omega_{k-1},\, X_b = \omega_k,\, X_c = \omega_a}\notag\\
  &\leq \frac{1}{(n-k)^2}\sum_{(b,c)\in I_k} \| D^{(k\, b)} D^{(j\, c)} f\|.
\label{case1}
\end{align}

\medskip

\noindent {\bf Case 2.}\
Now suppose that $a\in \{ j+1,\ldots, k\}$.
Define $z=b$ if $a=k$, and $z=a$ if $a\in\{j+1,\ldots,k-1\}$.
Arguing as above, we have
\begin{align*}
Z_k(\omega) &= \frac{1}{n-k+1}\,\sum_{b=k}^n
  \mYE{ f(\X\circ (k\, b))\St 
   X_1 = \omega_1,\ldots, X_{k-1} = \omega_{k-1},\, X_b = \omega_k},\\
Z_{k-1}(\omega) &= \frac{1}{n-k+1}\, \sum_{b=k}^n
  \mYE{ f(\X)\St 
   X_1 = \omega_1,\ldots, X_{k-1} = \omega_{k-1},\, X_b = \omega_k},\\
Z_{k-1}(\omega\circ (j\, a)) &= \frac{1}{n-k+1}\, \sum_{b=k}^n
  \mYE{ f(\X\circ (j\, z))\St 
   X_1 = \omega_1,\ldots, X_{k-1} = \omega_{k-1},\, X_b = \omega_k},\\
Z_{k}(\omega\circ (j\, a)) &= \frac{1}{n-k+1}\\
 &{\quad}\times \sum_{b=k}^n    
  \mYE{ f(\X\circ (j\, z)\circ (k\, b))\St 
   X_1 = \omega_1,\ldots, X_{k-1} = \omega_{k-1},\, X_b = \omega_k}.
\end{align*}
Combining these, we find that when $a\in \{ j+1,\ldots, k\}$,
\begin{equation}
\label{case2}
 \Abs{Z_k(\omega) - Z_{k-1}(\omega) - Z_k(\omega\circ (j\, a)) +
    Z_{k-1}(\omega\circ (j\, a))}
  \leq \frac{1}{n-k}\,\sum_{b=k}^n \| D^{(k\, b)} D^{(j\, z)} f\|.
\end{equation}

\noindent {\bf Consolidation.}\
Now we perform the sum over $a$. From (\ref{case1}) and (\ref{case2})
we have
\begin{align}
 \sum_{a=j+1}^n \Abs{Z_k(\omega) - Z_{k-1}(\omega) - Z_k(\omega\circ (j\, a))
   &{}+ Z_{k-1}(\omega\circ (j\, a))} \notag\\
\leq   \frac{1}{n-k} \sum_{(b,c)\in I_k}
    \| D^{(k\, b)}\, D^{(j\, c)} f\|
  &{}+ \frac{1}{n-k} \sum_{b=k}^n \| D^{(k\, b)} D^{(j\, b)} f\|  \notag\\
   &{}+ \frac{1}{n-k} \sum_{a=j+1}^{k-1} \sum_{b=k}^n \| D^{(k\, b)} D^{(j\, a)} f\|,
   \label{entiresum}
\end{align}
using the fact that (\ref{case1}) is independent of $a$ in Case 1.
Replacing the dummy variable $c$ in the first sum by $a$, and observing
that any term with $k=b$ equals zero, we can rewrite the right-hand side of
\eqref{entiresum} as
\[ \frac{1}{n-k}\sum_{a=j+1}^n \sum_{b=k}^n \| D^{(k\, b)} D^{(j\, a)} f\|
 =  \frac{1}{n-k}\sum_{a=j+1}^n \sum_{b=k+1}^n \| D^{(k\, b)} D^{(j\, a)} f\|
  = (n-j)\, \varDelta_{jk}.
\]
Substituting this into (\ref{reduced}), we conclude that
\[
    \Diam_{j-1} \E_j (Z_k-Z_{k-1})^2 \leq 2\alpha_k\,\varDelta_{jk}
\]
as required.
\end{proof}

Combining the bounds proved above with Theorem~\ref{goodthm}
gives the following.

\begin{thm}\label{RanPermThm}
Let $\X$ be a uniformly random element of $S_n$
and let $f: S_n \rightarrow \Complexes$.
Write $\alpha_k = \alpha_k(f,S_n)$ and
$\varDelta_{jk} = \varDelta_{jk}(f,S_n)$. Then
\begin{itemize}\itemsep=0pt
 \item[(a)]   $\E e^{f(\X)}   = e^{\E f(\X)}(1 + K(f))$,
where $K(f)\in\Complexes$ satisfies
\[
  \abs{K(f)}  \leq e^{\frac18\sum_{j=1}^{n-1} \alpha_j^2} - 1.
\]
\item[(b)] 
$
\E e^{f(\X)} = e^{\E f(\X) + \frac{1}{2} \V f(\X)} \(1 + L(f) e^{\frac{1}{2} \Var\Im f(\X)}\), 
$
where $\beta_j= \sum_{k=j+1}^{n-1}\alpha_k\varDelta_{jk}$ and\\ $L(f)\in\Complexes$ satisfies 
\[
		\abs{L(f)} \leq
		\exp \biggl( \,
\sum_{j=1}^{n-1} \,\(\tfrac{1}{6} \alpha_j^3 
+ \tfrac{1}{3}\alpha_j \beta_j + \tfrac{5}{8} \alpha_j^4 + \tfrac{5}{8}
\beta_j^2\)	 \biggr) -1.
\]
\end{itemize}
 
\end{thm}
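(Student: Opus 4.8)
The plan is to feed the bounds of Lemma~\ref{Lemma_Perm} into Theorem~\ref{goodthm}, applied to the Doob martingale $Z_0,\ldots,Z_{n-1}$ of~\eqref{Doob_permutations}. Since $Z_n=Z_{n-1}$ and $\calF_n=\calF_{n-1}$, Theorem~\ref{goodthm} may be used with its length-$n$ martingale replaced by our length-$(n-1)$ one, so all sums run up to $n-1$. Because the base field $\calF_0=\{\emptyset,\varOmega\}$ is trivial, $\E_0$ is the ordinary expectation, $\esssup(\,\cdot\mid\calF_0)$ is the ordinary essential supremum, and the $\calF_0$-measurable quantities $K(\Z)$, $L(\Z)$ are constants. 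I will also use the identifications $Z_0=\E f(\X)$, $Z_{n-1}=f(\X)$, $\V_0 Z_{n-1}=\V f(\X)$, and $\V_0(\Im Z_{n-1})=\Var\Im f(\X)$.

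Part~(a) is then immediate. By~\eqref{Perm_alpha}, $R_j=\Diam_{j-1}Z_j\le\alpha_j$ surely, so $\sum_{j=1}^{n-1}R_j^2\le\sum_{j=1}^{n-1}\alpha_j^2$, and substituting into Theorem~\ref{goodthm}(a) gives the stated bound on $\abs{K(f)}$.

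For part~(b), the work is to bound the quantity $Q_j$ of Theorem~\ref{goodthm}(b), which involves the conditional second moments $\E_j(Z_{n-1}-Z_j)^2$ and $\E_j(\Re Z_{n-1}-\Re Z_j)^2$. Fix $j$ with $1\le j\le n-1$. Combining the telescoping identity of Lemma~\ref{telescope}, the subadditivity of the conditional diameter (which is a seminorm), and the bound~\eqref{Perm_Delta}, I obtain
\[
\begin{aligned}
 \Diam_{j-1}\E_j(Z_{n-1}-Z_j)^2
 &\le \sum_{k=j+1}^{n-1}\Diam_{j-1}\E_j(Z_k-Z_{k-1})^2 \\
 &\le 2\sum_{k=j+1}^{n-1}\alpha_k\varDelta_{jk} = 2\beta_j.
\end{aligned}
\]
For the real-part term, note that conditional expectation commutes with $\Re$, so $\Re Z_0,\ldots,\Re Z_{n-1}$ is exactly the Doob martingale of the real-valued function $\Re f$; since $D^{(k\,b)}D^{(j\,a)}(\Re f)=\Re\(D^{(k\,b)}D^{(j\,a)}f\)$ and similarly for first differences, we have $\alpha_k(\Re f,S_n)\le\alpha_k$ and $\varDelta_{jk}(\Re f,S_n)\le\varDelta_{jk}$, so the same argument applied to $\Re f$ gives $\Diam_{j-1}\E_j(\Re Z_{n-1}-\Re Z_j)^2\le2\beta_j$ as well. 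Hence $Q_j\le2\beta_j$. Feeding $R_j\le\alpha_j$ and $Q_j\le2\beta_j$ into the exponent of Theorem~\ref{goodthm}(b), and noting $\tfrac16 R_jQ_j\le\tfrac13\alpha_j\beta_j$ and $\tfrac{5}{32}Q_j^2\le\tfrac58\beta_j^2$, produces the claimed bound on $\abs{L(f)}$.

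The argument is mostly bookkeeping. The one point requiring care is the real-part term in $Q_j$: one must observe that $\Re Z_j$ is itself a Doob martingale and that passing to real parts does not enlarge the difference seminorms $\alpha_k$ and $\varDelta_{jk}$, since a naive pointwise comparison $[\Re(Z_{n-1}-Z_j)]^2\le\abs{Z_{n-1}-Z_j}^2$ does not survive the conditional diameter. After that, everything reduces to substitution and elementary arithmetic with the constants.
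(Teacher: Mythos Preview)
Your proof is correct and follows essentially the same route as the paper: apply Theorem~\ref{goodthm} to the Doob martingale~\eqref{Doob_permutations}, bound $R_j$ by~\eqref{Perm_alpha}, and bound $Q_j$ by combining Lemma~\ref{telescope} with subadditivity of the conditional diameter and~\eqref{Perm_Delta}, handling the $\Re$-term by observing that $\alpha_k(\Re f,S_n)\le\alpha_k$ and $\varDelta_{jk}(\Re f,S_n)\le\varDelta_{jk}$. Your closing remark on why the real-part term requires its own invocation of~\eqref{Perm_Delta} is a nice clarification that the paper leaves implicit.
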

\begin{proof}
Let $\Z(\X)=(Z_0, Z_1, \ldots, Z_{n-1})$ be the Doob martingale sequence given by (\ref{Doob_permutations}).
By applying Theorem~\ref{goodthm} to $\Z(\X)$,  it remains  to show that 
\[
	   R_j  \leq \alpha_j,  \qquad
	Q_j \leq 2 \beta_j.
 \]
The first bound is given by $\eqref{Perm_alpha}$ and the definition
 of~$R_j$.

Observe that $D^{(ja)} (\Re f( \omega)) = \Re D^{(ja)}  f( \omega) $ 
for any $j,a\in\{1,\ldots,n\}$. Therefore, 
\begin{align*}
\alpha_j[\Re f, S_n] &\leq \alpha_j[f,S_n],
 \hspace*{7em} (1 \leq j \leq n-1);
\\
 \varDelta_{jk}[\Re f, S_n] &\leq   \varDelta_{jk} [f,S_n],
 \hspace*{6.5em} (1 \leq j\neq k \leq n-1).
\end{align*}
Using $\eqref{Perm_Delta}$ twice (for  $f$ and $\Re f$), we find that both quantities 
$\Diam_{j-1} \E_j (Z_k-Z_{k-1})^2$
and $\Diam_{j-1} \E_j (\Re Z_k-\Re Z_{k-1})^2$
are bounded above by $2 \alpha_k\,\varDelta_{jk}$.
Since the conditional diameter is subadditive, we can
apply Lemma~\ref{telescope} to obtain the remaining bound on $Q_j$.
\end{proof}


\subsection{Random subsets and other discrete distributions}\label{s:discrete}

Using our estimates for random permutations, we can also apply Theorem \ref{goodthm} for
functions of random subsets of given size, as well as functions of random
vectors with standard multidimensional discrete distributions, such as the hypergeometric
distribution or the multinomial distribution.
We now define analogues of the operator $D^{(j\, a)}$
for these cases.

\bigskip

\noindent {\bf Subsets.}\
Let $2^{[n]}$ denote the set of all subsets of $\{1,2,\ldots, n\}$.
For a given $f:2^{[n]}\rightarrow \Complexes$, and for every $A\in 2^{[n]}$, let 
\[ D_B^{(j\, a)} f(A) = f(A) - f(A\oplus \{j,\, a\})
\]
where $\oplus$ denotes the symmetric difference.
Note that if $|A\cap \{ j,\,a\}|=1$ then $A\oplus \{j,a\}$ has the same
size as $A$.
Let $B_{n,m}$ denote the set of $m$-subsets of  $\{1,\ldots, n\}$, 
and define
\[
 \alphamax[f,B_{n,m}] = \max \,\Abs{D_B^{(j\, a)} f(A)},
\]
where the maximum is taken over all $A \in B_{n,m}$ and 
all $j, a\in \{1,\ldots, n\}$ such that $j\in A$ and $a\not\in A$.
Similarly, define
\[
	\Deltamax[f,B_{n,m}] = \max\,\Abs{ D_B^{(k\, b)} \, D_B^{(j\, a)} f(A) },
\]
where the maximum is over all distinct $j,k,a,b\in \{1,\ldots, n\}$ and all $A \in B_{n,m}$ such that 
$j,k\in A$ and $a,b\not\in A$.
Note that $\alphamax[f,B_{n,m}]$ and
$\Deltamax[f,B_{n,m}]$ depend only on the values of $f$ on the set $B_{n,m}$.

\bigskip

\noindent {\bf Sequences.}\
For a given function $f:\Integers^{\ell} \to\Complexes$, and for every $\xvec = (x_1,\ldots, x_\ell)\in \Integers^\ell$, define
\[ D_N^{(j\, a)} f(\xvec) = f(\xvec) - f(\xvec')\]
where 
$\xvec'$ has all entries equal to those of $\xvec$, except that the $j$-th entry is
increased by 1 and the $a$-th entry is decreased by 1.
For positive integers $\ell, m$, define
\[
   N_{\ell,m} = \bigl\{ (x_1,\ldots,x_\ell)\in \{0,1,2,\ldots\,\}^\ell : x_1+\cdots+x_\ell=m \bigr\}.
\]
Note that if $\xvec\in N_{\ell,m}$ with $x_a > 0$ then
$\xvec'$, defined above, also belongs to $N_{\ell,m}$.
(If $\xvec$ has any positive entry then no other entry can equal $m$.)
Define
\begin{equation}
\label{alphamax-vector}
 \alphamax[f,N_{\ell,m}] = \max \,\Abs{D_N^{(j\, a)} f(\xvec)}
\end{equation}
 where the maximum is over all $\xvec\in N_{\ell,m}$ and all distinct $j,a$ such that
$x_a > 0$.
Also define 
\begin{equation}
\label{Deltamax-vector}
    \Deltamax[f,N_{\ell,m}] = 
       \max \,\Abs{D_N^{(k\, b)} \, D_N^{(j\, a)} f(\xvec)}
\end{equation}
 where the maximum is taken over all distinct $j,k, a,b$,
such that $\min\{ x_a, x_b\} > 0$ and all $\xvec\in N_{\ell,m}$.
Again, observe that $\alphamax[f,N_{\ell,m}]$ and $\Deltamax[f,N_{\ell,m}]$ depend only
on the values of $f$ on~$N_{\ell,m}$.

\nicebreak
\begin{thm}\label{Theorem_subsets}
Consider any one of the following three possibilities:
\begin{itemize}\itemsep=0pt
\item[\emph{(i)}]
$\X$ is a uniformly random element of $B_{n,m}$, where $m\leq n/2$. 
\item[\emph{(ii)}]
$\X=(X_1,\ldots,X_\ell)$ is a $N_{\ell,m}$-valued random variable
with the hypergeometric distribution with parameters
$n_1,\ldots, n_\ell\geq 0$
such that $n_1 + \cdots + n_\ell = n\geq 2m$; 
that is,
 \[
    \Prob(\X=(x_1,\ldots,x_\ell)) = 
       \binom{n}{m}^{\!\!-1}\prod_{j=1}^\ell \binom{n_j}{x_j},
             \qquad (x_1,\ldots,x_\ell)\in N_{\ell,m}.
 \]
\item[\emph{(iii)}]
$\X=(X_1,\ldots,X_\ell)$ is a $N_{\ell,m}$-valued random variable
with the multinomial distribution with parameters $p_1,\ldots, p_\ell > 0$
such that $p_1 + \cdots + p_\ell = 1$;  that is,
 \begin{equation}
    \Prob(\X=(x_1,\ldots,x_\ell)) = 
      m! \,\prod_{j=1}^\ell \frac{p_j^{x_j}}{x_j!},
             \qquad (x_1,\ldots,x_\ell)\in N_{\ell,m}.
\label{multinomial}
 \end{equation}
\end{itemize}
With $\varLambda=B_{n,m}$ or $\varLambda=N_{\ell,m}$, and given
a function $f:\varLambda\rightarrow \Complexes$, let
$\alphamax=\alphamax[f,\varLambda]$ and $\Deltamax = \Deltamax[f,\varLambda]$.
Then
 \begin{itemize}\itemsep=0pt
  \item[\emph{(a)}] $\displaystyle
  \E e^{f(\X)} = e^{\E f(\X)}(1+ K(f))$, 
  where $K(f)\in\Complexes$ satisfies
  $\abs{K(f)} \leq e^{\frac18 m\, \alphamax^2 } - 1$.
\item[\emph{(b)}] $\displaystyle 
  \E e^{f(\X)} = e^{\E f(\X) + \tfrac12 \V f(\X) }(1+ L(f)\, e^{\tfrac12 \Var\Im f(\X)})$, where $L(f)\in\Complexes$ satisfies
\[  
   \abs{L(f)} \leq \exp
	\( 
	 \tfrac{1}{2}  m \alphamax^3 + \tfrac{1}{6} m^2 \alphamax^2 \Deltamax 
           + 2 m \alphamax^4 + \tfrac{5}{8}  m^3 \alphamax^2 \Deltamax^2
	 \)-1.
\]
\end{itemize}
\end{thm}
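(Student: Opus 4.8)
The plan is to derive all three statements from the random-permutation estimate of Theorem~\ref{RanPermThm}. The uniform distribution on $B_{n,m}$ is the image of the uniform distribution on $S_n$ under the map $\omega\mapsto\{\omega_1,\ldots,\omega_m\}$, which handles part~(i); part~(ii) is reduced to part~(i) by composing $f$ with the map recording the block sizes of the chosen set; and part~(iii) is obtained from part~(ii) by letting the block sizes grow so that the hypergeometric law converges to the multinomial law.

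For part~(i), let $\Y$ be uniform on $S_n$, so $A=\{Y_1,\ldots,Y_m\}$ is uniform on $B_{n,m}$, and set $g(\omega)=f(\{\omega_1,\ldots,\omega_m\})$, so that $f(\X)$ and $g(\Y)$ are equidistributed. The key point is that $D^{(j\,a)}$ acts on $g$ only when exactly one of the positions $j,a$ lies in $\{1,\ldots,m\}$: otherwise $g(\omega\circ(j\,a))=g(\omega)$, while if $j\le m<a$ then $\{\omega_1,\ldots,\omega_m\}$ changes to $\{\omega_1,\ldots,\omega_m\}\oplus\{\omega_j,\omega_a\}$, whence $D^{(j\,a)}g(\omega)=D_B^{(\omega_j\,\omega_a)}f$ evaluated at that set and $\norm{D^{(j\,a)}g}\le\alphamax$. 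Thus $\alpha_j(g,S_n)\le\alphamax$ for $j\le m$ and $\alpha_j(g,S_n)=0$ for $j>m$, so $\sum_{j=1}^{n-1}\alpha_j(g,S_n)^2\le m\,\alphamax^2$, and Theorem~\ref{RanPermThm}(a) gives part~(a). For part~(b), a similar but longer analysis of $D^{(k\,b)}D^{(j\,a)}g$ shows it vanishes unless $j<k\le m<a,b$, is at most $\Deltamax$ in absolute value when moreover $a\ne b$ (four distinct elements get flipped), and telescopes to a single $D_B$-difference, hence is at most $\alphamax$, in the degenerate case $a=b$. Summing over $a,b$ and using $m\le n/2$ gives $\varDelta_{jk}(g,S_n)\le\Deltamax+\alphamax/(n-m)$ for $j<k\le m$; since $\alpha_k(g,S_n)=0$ for $k>m$ only such $k$ contribute to $\beta_j$, so $\beta_j\le(m-j)\,\alphamax\Deltamax+\alphamax^2$. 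Substituting into Theorem~\ref{RanPermThm}(b), using $\sum_{j\le m}(m-j)\le m^2/2$ and $\sum_{j\le m}(m-j)^2\le m^3/3$, and absorbing the residual cross-term $O(m^2\alphamax^3\Deltamax)$ into the $m\,\alphamax^4$ and $m^3\alphamax^2\Deltamax^2$ terms via $2xy\le x^2+y^2$, yields the claimed bound on $\abs{L(f)}$.

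For part~(ii), fix a partition $\{1,\ldots,n\}=B_1\sqcup\cdots\sqcup B_\ell$ with $\card{B_i}=n_i$ and define $\tilde g:2^{[n]}\to\Complexes$ by $\tilde g(A)=f(\card{A\cap B_1},\ldots,\card{A\cap B_\ell})$; if $A$ is uniform on $B_{n,m}$ then the vector of intersection sizes has exactly the stated hypergeometric law, so it suffices to apply part~(i) to $\tilde g$ (note $n\ge 2m$ gives $m\le n/2$) after bounding $\alphamax(\tilde g,B_{n,m})$ and $\Deltamax(\tilde g,B_{n,m})$. Flipping one selected element for one unselected element either leaves all block sizes unchanged (both in the same block) or changes the block-size vector by a single $D_N$-move, so $\alphamax(\tilde g,B_{n,m})\le\alphamax(f,N_{\ell,m})$; the second-order quantity is controlled by examining how the four flipped elements distribute among the blocks. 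Part~(iii) then follows by a limiting argument: given $p_1,\ldots,p_\ell>0$, for each large $n$ pick integers $n_1,\ldots,n_\ell$ summing to $n\ge 2m$ with $n_i/n\to p_i$; the hypergeometric laws converge pointwise on the finite set $N_{\ell,m}$ to the distribution~\eqref{multinomial}, and since $\alphamax(f,N_{\ell,m})$ and $\Deltamax(f,N_{\ell,m})$ depend only on $f|_{N_{\ell,m}}$ while $\E e^{f(\X)}$, $\E f(\X)$, $\V f(\X)$ and $\Var\Im f(\X)$ all converge, parts~(a) and~(b) pass to the limit.

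I expect the main obstacle to be the second-order bookkeeping. For part~(i) this is the routine but intricate classification of $D^{(k\,b)}D^{(j\,a)}g$ (easier than in Lemma~\ref{Lemma_Perm} because $g$ has such explicit form), together with verifying that the degenerate and cross terms fit inside the stated constants. For parts~(ii)--(iii) the genuinely delicate point is that when two of the four flipped elements lie in a common block the corresponding second difference of $\tilde g$ is no longer a standard $D_N D_N$-difference of $f$ but only telescopes to a combination of first differences of $f$; keeping the aggregate contribution of these ``same-block'' terms within the budget afforded by the $\alphamax$-terms of the stated bound, rather than merely within an $\alphamax^2$-type budget, is the step that must be pushed beyond the direct transfer of the permutation estimates.
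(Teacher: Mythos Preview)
Your plan for part~(i) is correct and essentially identical to the paper's proof; the paper also lifts $f$ to $S_n$ via $g(\omega)=f(\{\omega_1,\ldots,\omega_m\})$, obtains $\alpha_j\le\alphamax$ for $j\le m$ and $\alpha_j=0$ otherwise, and bounds $\varDelta_{jk}$ by separating the diagonal $a=b$ (they use $2\alphamax$ where you use the sharper $\alphamax$) from the off-diagonal (bounded by $\Deltamax$).  The arithmetic leading to the stated constants then goes through exactly as you sketch.

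For parts~(ii)--(iii), your reduction (compose with the block-size map, apply~(i), then let the block sizes tend to infinity) is again exactly what the paper does, in two sentences.  You are right, however, to flag the same-block second differences as the delicate point: the paper's ``apply case~(i)'' tacitly requires $\Deltamax(\tilde g,B_{n,m})\le\Deltamax(f,N_{\ell,m})$, and this inequality is \emph{false} in general.  A clean counterexample is $\ell=2$, $f(x_1,x_2)=x_1^2$: here $\Deltamax(f,N_{2,m})=0$ (there are no four distinct indices in $\{1,2\}$), yet with $j,k$ in the first block and $a,b$ in the second one gets $D_B^{(k\,b)}D_B^{(j\,a)}\tilde g(A)=x_1^2-2(x_1-1)^2+(x_1-2)^2=2$.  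More generally, whenever two of the four flipped elements share a block, the second difference of $\tilde g$ is a directional second difference of $f$ controlled only by $2\alphamax$, not by $\Deltamax$.  Since $\varDelta_{jk}$ involves a \emph{maximum} over permutations for each fixed pair $(a,b)$, and bad permutations exist for every such pair, one only obtains $\varDelta_{jk}\lesssim\max(\Deltamax,2\alphamax)$ rather than $\Deltamax$; fed through Theorem~\ref{RanPermThm}(b) this yields surplus terms of order $m^2\alphamax^3$ and $m^3\alphamax^4$ that cannot be absorbed into the stated bound when $\Deltamax\ll\alphamax$.

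So you have put your finger on a genuine gap that the paper's proof shares: neither your proposal nor the paper explains how to keep the same-block contribution within the $\alphamax$-budget.  Possible salvages are to replace $\Deltamax$ by $\max(\Deltamax,2\alphamax)$ in the bound for~(b) in cases~(ii)--(iii), or to bypass case~(i) entirely and run a martingale adapted to the block structure (e.g.\ the i.i.d.\ filtration $Y_1,\ldots,Y_m$ for the multinomial); but the latter route faces the same obstruction at the level of $Q_j$, and neither is what the paper does.
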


\begin{proof}
First suppose that $\X$ has the distribution described in (i), 
and define $\Tilde{f}:S_n\rightarrow \Complexes$ by
$$
  \Tilde{f}(\omega_1, \ldots, \omega_n) = f(\{\omega_1, \ldots, \omega_m\}), \ \ \ \omega \in S_n.
$$
Let $\Y$ be a uniformly random element of $S_n$. Observe that
$f(\X)$ and $\Tilde{f}(\Y)$ have the same distribution, and hence
\[
 \E e^{f(\X)} = \E e^{\Tilde{f}(\Y)}, \qquad  \E f(\X) = \E \Tilde{f}(\Y),
\]
and similarly for $\V f(\X)$ and $\Var \Im f(\X)$.

Let $\alpha_j= \alpha_j[\Tilde{f},S_n]$ and $\varDelta_{jk} = \varDelta_{jk}[\Tilde{f},S_n]$ denote the
parameters
used in Lemma~\ref{Lemma_Perm},
 defined with respect to the function $\Tilde{f}$ and set $S_n$.
We will apply Theorem~\ref{RanPermThm} to the function $\Tilde{f}$.
Then the bound (a) follows immediately from Theorem~\ref{RanPermThm} (a), since
\[ 
	  \alpha_j \leq \begin{cases} \alphamax, & \text{ for $j=1,\ldots, m$,}\\
                                                           0, & \text{ for $j =m+1,\ldots, n$.}
  \end{cases}
\]
Next, note that $\beta_j=0$ for  $j=m+1,\ldots, n$, and $\varDelta_{jk}=0$ if $k > m$.
We now estimate $\varDelta_{jk}$ when $j < k \leq m$. 

If $b\leq m$ or $a\leq m$ then $D^{(k\, b)}\, D^{(j\, a)} \Tilde{f}=0$, since $\Tilde{f}$
depends only on the set of the first $m$ components of the input permutation.
Next, observe that if $a=b > m$ then
\[ \| D^{(k\, a)}\, D^{(j\, a)} \Tilde{f}\| \leq 2\alphamax,\]
while if $a\neq b$ and $a,b > m$ then
\[ \| D^{(k\, b)}\, D^{(j\, a)} \Tilde{f}\| \leq \Deltamax.\]
Therefore
\[
  \varDelta_{jk} \leq \frac{ 2(n-m)\alphamax + (n-m)(n-m-1)\Deltamax } {(n-j)(n-k)}\, 
              \leq \frac{2}{n-m}\alphamax + \Deltamax.
\]
Hence using Lemma \ref{Lemma_Perm},  it follows that
\[
\beta_j \leq  (m-j) \alphamax\left(\dfrac{2}{n-m}\alphamax + \Deltamax\right)
\]
for $j=1,\ldots, m$. 
Using these bounds and the fact that $2m\leq n$, we find that
\begin{align*}
& \sum_{j=1}^{n-1} \,\(\tfrac{1}{6} \alpha_j^3 + \tfrac{1}{3}\alpha_j \beta_j + \tfrac{5}{8} \alpha_j^4 + \tfrac{5}{8}
\beta_j^2\)\\
&{\quad}\leq \tfrac{1}{6} m\alphamax^3 + \tfrac{1}{3}\, \alphamax^2 \left(\dfrac{2}{n-m}\alphamax + \Deltamax\right)\,
\sum_{j=1}^m (m-j)
  + \tfrac{5}{8} m \alphamax^4 \\
 & \hspace*{3cm} {} + \tfrac{5}{8} \alphamax^2\, 
                       \left(\dfrac{2}{n-m}\alphamax + \Deltamax\right)^2\, \sum_{j=1}^m (m-j)^2\\
&{\quad}\leq \tfrac{1}{2} m\alphamax^3  + \tfrac{1}{6} m^2 \alphamax^2\Deltamax +
+ \tfrac{5}{8} m\alphamax^4  + \tfrac{5}{24} m^3\, \alphamax^2\left(\dfrac{2}{n-m}\alphamax + \Deltamax\right)^2\\&{\quad}\leq 
	 \tfrac{1}{2}  m \alphamax^3 + \tfrac{1}{6} m^2 \alphamax^2 \Deltamax 
           + 2 m \alphamax^4 + \tfrac{5}{8}  m^3 \alphamax^2 \Deltamax^2.
\end{align*}
We used the inequality 
$(a+b)^2 \leq \tfrac{3}{2} a^2 + 3b^2$ in the final line.
Applying Theorem~\ref{RanPermThm}(b) completes the proof for when $\X$ has the distribution described
in (i).

Next, suppose that $\X$ has the hypergeometric distribution described in (ii).  
Take disjoint sets $A_1,\ldots,A_\ell$ with $\card{A_j}=n_j$ for each~$j$.  If we choose a random subset
 $B\subseteq A_1\cup\cdots\cup A_\ell$ with size $m$ then
 $\X=(\card{B\cap A_1},\ldots,\card{B\cap A_\ell})$ has the required distribution.
 Now we can consider $f(\X)$ as a function of $B$ and apply case (i).
 
 Finally, suppose that $\X$ has the multinomial distribution described in (iii). 
Apply case (ii) with $n_j=\lceil p_j t\rceil$ and let $t\to\infty$.
\end{proof}

We remark that by giving tighter bounds on factors of the form $m/(n-m)$ in the above proof,
the constants in the error term $|L|$ for (b) can be improved.
We do not pursue this here.

\nicebreak
\section{Moment calculations}\label{s:moments}

Now we prove a lemma that will be used repeatedly
in the following sections.

\begin{lemma}\label{ranper}
Suppose $u$, $v:\{ 1,2,\ldots, n\} \rightarrow \Reals$.
Define the function $\Psi = \Psi_{u,v}: S_n \to \Reals $ by
  $\Psi(\sigma)=\sum_{j=1}^n u(j) v({\sigma_j)}$ for $\sigma\in S_n$.
  Let $\X=(X_1,\ldots,X_n)$ denote a random permutation uniformly chosen from~$S_n$.
  Define $\bar u=\frac{1}{n} \sum_{j=1}^n u(j)$,
  $\bar v=\frac{1}{n} \sum_{j=1}^n v(j)$. 
Finally, let
\[  \alpha=\(\max_j u(j)-\min_j u(j)\)\,\(\max_j v(j)-\min_j v(j)\).\]
\begin{itemize}\itemsep=0pt
\item[\emph{(i)}]
  Then
\[
    \E \Psi(\X) =n\, \bar u\, \bar v \,\,\, \text{ and } \,\,\,
     \E e^{\Psi(\X)} = e^{\E \Psi(\X) +\frac12\Var \Psi(\X) + L}
\]
   for some $L\in\Reals$ with $\abs L \leq \frac32n\alpha^3+ 11\,n\alpha^4$.
\item[\emph{(ii)}]
Now let $u'$, $v'\in \{ 1,2,\ldots, n\}\rightarrow \Reals $ and
let $\bar{u}'$, $\bar{v}'$ be the average value of $u'$, $v'$, respectively.
Let $\Psi' = \Psi_{u',v'}$.  Then
\[
   \myCov{\Psi(\X),\Psi'(\X)} = \frac{1}{n-1}
      \sum_{j=1}^n (u(j)-\bar u)(u'(j)-\bar u')
       \sum_{k=1}^n (v(k)-\bar v)(v'(k)-\bar v').
\]
In particular,
\[
   \Var \Psi(\X) = \frac{1}{n-1}
      \sum_{j=1}^n (u(j)-\bar u)^2
       \sum_{k=1}^n (v(k)-\bar v)^2.
\]
\item[\emph{(iii)}]
For distinct $j,k\in \{1,\ldots, n\}$, define $E_{jk}:S_n\to\Reals $ by
\[ E_{jk}(\sigma) = (u(j) + v(\sigma_j))(u(k) +  v(\sigma_k)).
\]
Then
\[
 \E E_{jk}(\X) = (u(j) + \bar{v})( u(k) + \bar{v}) - \frac{1}{n(n-1)}\sum_{i=1}^n (v(i) - \bar{v})^2.
\]
\item[\emph{(iv)}]  For $j,k,\ell,m\in \{1,\ldots, n\}$ with $j,k$ distinct and $\ell,m$ distinct,
\[
  \mYCov{E_{jk}(\X), E_{\ell m}(\X)} = \begin{cases} O((\| u\| + \| v\| )^4/n), & 
      \text{ if $\{j,k\}\cap \{\ell,m\} = \emptyset$}; \\
                O( (\| u\| + \|v\|)^4 ), & \text{ otherwise.} \end{cases}
\]
\item[\emph{(v)}] For distinct $j,k\in\{1,\ldots, n\}$,
\begin{align*}
&  \mYCov{E_{jk}(\X),\Psi'(\X)} \\
& = 
\dfrac{1}{n}\,
\Bigl( (u'(j) - \bar{u}')(u(k) + \bar{v}) + (u'(k) - \bar{u}')(u(j) + \bar{v})\Bigr)\,
 \sum_{a=1}^n (v(a)-\bar{v})(v'(a)-\bar{v}')
\\[-1ex]
 & \hspace*{3cm} {}
   + O\left(\frac{(\| u\| + \| v\|)^2 \| u'\|\| v'\|}{n}\right)\\
 &=  O\left((\| u\| + \| v\|)^2 \| u'\|\| v'\|\right).
\end{align*}
\end{itemize}
\end{lemma}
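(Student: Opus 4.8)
The plan is to linearise $E_{jk}$: I would expand it as a constant plus two terms depending on a single coordinate of the permutation plus one term depending on two coordinates, dispose of the one-coordinate terms using part~(ii), and compute the remaining two-coordinate covariance directly from the permutation moment identities already used in parts~(ii)--(iv).

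First I would pass to centred weights. Set $w'=u'-\bar u'$ and $\tilde v'=v'-\bar v'$, so $\sum_i w'(i)=0$ and $\sum_a\tilde v'(a)=0$. Since $(\X_1,\ldots,\X_n)$ is a permutation of $(1,\ldots,n)$, the sum $\sum_{i=1}^n v'(\X_i)=\sum_{a=1}^n v'(a)$ is a constant, and together with $\sum_i w'(i)=0$ this gives $\Psi'(\X)-\E\Psi'(\X)=\sum_{i=1}^n w'(i)\,\tilde v'(\X_i)$. Since the right side has mean zero,
\[
  \Cov\bigl(E_{jk}(\X),\Psi'(\X)\bigr)
   =\sum_{i=1}^n w'(i)\,\E\bigl(E_{jk}(\X)\,\tilde v'(\X_i)\bigr).
\]
Writing $E_{jk}(\sigma)=u(j)u(k)+u(j)v(\sigma_k)+u(k)v(\sigma_j)+v(\sigma_j)v(\sigma_k)$, the constant term contributes nothing. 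Regarded as a function of the permutation, $v(\X_k)$ equals $\Psi_{\chi_k,v}(\X)$ where $\chi_k$ is the indicator of the position~$k$, so part~(ii) applied to $\Psi_{\chi_k,v}$ and $\Psi_{u',v'}$ gives
\[
  \Cov\bigl(v(\X_k),\Psi'(\X)\bigr)=\frac{u'(k)-\bar u'}{n-1}\,S,
  \qquad
  S:=\sum_{a=1}^n(v(a)-\bar v)(v'(a)-\bar v'),
\]
using $\sum_i\chi_k(i)=1$ and $\sum_i(u'(i)-\bar u')=0$; symmetrically for $v(\X_j)$. What remains is the two-coordinate covariance $\Cov\bigl(v(\X_j)v(\X_k),\Psi'(\X)\bigr)$.

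I would write this as $\sum_i w'(i)\,\E\bigl(v(\X_j)v(\X_k)\tilde v'(\X_i)\bigr)$ and split according to whether $i\in\{j,k\}$. For $i\notin\{j,k\}$, exchangeability of the coordinates outside $\{j,k\}$ makes the summand independent of~$i$; calling it $c$, we get $\sum_{i\notin\{j,k\}}w'(i)c=-c(w'(j)+w'(k))$ since $\sum_i w'(i)=0$, while a three-coordinate moment computation (inclusion--exclusion over distinct indices exactly as in parts~(ii)--(iv), with $\sum_a\tilde v'(a)=0$ annihilating the leading term) gives $|c|=O(\|v\|^2\|v'\|/n)$; as $|w'(j)|,|w'(k)|\le 2\|u'\|$, this block is $O(\|v\|^2\|u'\|\|v'\|/n)$. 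For $i=j$, the two-coordinate moment identity together with $\sum_a v(a)\tilde v'(a)=S$ gives $\E\bigl(v(\X_j)\tilde v'(\X_j)\,v(\X_k)\bigr)=\tfrac{\bar v}{n-1}S+O(\|v\|^2\|v'\|/n)$, and the case $i=k$ is symmetric. Hence
\[
  \Cov\bigl(v(\X_j)v(\X_k),\Psi'(\X)\bigr)
   =\frac{\bar v}{n-1}\,(w'(j)+w'(k))\,S
   +O\!\left(\frac{(\|u\|+\|v\|)^2\|u'\|\|v'\|}{n}\right).
\]
Adding this to $u(j)\,\tfrac{w'(k)}{n-1}S$ and $u(k)\,\tfrac{w'(j)}{n-1}S$, grouping the bracketed factor as $w'(j)(u(k)+\bar v)+w'(k)(u(j)+\bar v)$, and replacing $\tfrac1{n-1}$ by $\tfrac1n$ at the cost of the same error, I would obtain
\[
  \Cov\bigl(E_{jk}(\X),\Psi'(\X)\bigr)
   =\frac{S}{n}\Bigl((u'(j)-\bar u')(u(k)+\bar v)+(u'(k)-\bar u')(u(j)+\bar v)\Bigr)
   +O\!\left(\frac{(\|u\|+\|v\|)^2\|u'\|\|v'\|}{n}\right),
\]
which is the asserted identity once $S$ is written out. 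The cruder bound $O\bigl((\|u\|+\|v\|)^2\|u'\|\|v'\|\bigr)$ then follows since $|S/n|=O(\|v\|\|v'\|)$ and the bracketed factor is $O(\|u'\|(\|u\|+\|v\|))$.

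I expect the only genuine difficulty to be the bookkeeping in the two-coordinate term: one must retain the $O(1)$-sized moment $\tfrac{\bar v}{n-1}S$, which survives into the final formula, while recognising that every three-index moment and every ``mismatched'' two-index moment (where $\tilde v'$ is evaluated at a coordinate different from its own argument) carries an extra factor~$n^{-1}$, and while noticing that the whole $i\notin\{j,k\}$ block collapses to a multiple of $w'(j)+w'(k)$. These two features --- the extra $n^{-1}$ supplied by $\sum_a\tilde v'(a)=0$, and the collapse forced by $\sum_i w'(i)=0$ --- are what make the estimate work.
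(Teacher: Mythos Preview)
Your argument is correct. The paper's route is organized differently: it keeps $E_{jk}$ intact and computes $\Cov(E_{jk}(\X),v'(X_i))$ for each position~$i$ directly, by conditioning on $X_i=a$ and invoking part~(iii) on the remaining $n-1$ values; it then sums $u'(i)\Cov(E_{jk}(\X),v'(X_i))$ over~$i$. You instead centre~$\Psi'$, expand $E_{jk}$ into constant, linear, and bilinear pieces, and dispatch the linear pieces by recycling part~(ii) before doing the case split on~$i$ only for the bilinear term $v(X_j)v(X_k)$. Your organization is a little cleaner in that the linear pieces are handled for free by existing machinery, and the centring $\sum_i w'(i)=0$, $\sum_a\tilde v'(a)=0$ makes the collapse of the $i\notin\{j,k\}$ block and the vanishing of the leading three-index moment transparent; the paper's approach, by contrast, gets all four contributions in one conditioning step but at the cost of a longer explicit computation. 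Both arrive at the same main term $\tfrac{S}{n-1}\bigl(w'(j)(u(k)+\bar v)+w'(k)(u(j)+\bar v)\bigr)$ before absorbing $\tfrac{1}{n-1}\to\tfrac1n$ into the error.
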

\begin{proof}
We calculate that
\begin{align*}
\E \Psi(\X) &= \sum_{j=1}^n u(j)\, \E v(X_j) = \bar{v}\, \sum_{j=1}^n u(j) = n \bar{u}\bar{v}.
\end{align*}
Next we apply 
 Theorem~\ref{goodthm}(b) and Lemma~\ref{Lemma_Perm} to the Doob martingale for $\Psi:S_n \rightarrow \Reals$, 
as defined in (\ref{Doob_permutations}).  Observe that, for $1\leq j<a\leq n$
we have
\begin{align*}
  D^{(j\, a)} \Psi(\sigma) &= (u(j) - u(a))(v(\sigma_j) - v(\sigma_a))
\end{align*}
Therefore, $\|D^{(j\, a)} \Psi \|\leq \alpha$ and $\alpha_j[\Psi,S_n] \leq \alpha$.
When $1\leq j,k,a,b\leq n$ are distinct, observe that $D^{(k\, b)}\, D^{(j\, a)} \Psi(\sigma) = 0$. 
Otherwise, we can bound
\[
 \|D^{(k\, b)}\, D^{(j\, a)} \Psi\| \leq 2 \|D^{(j\, a)} \Psi\| \leq 2\alpha,
\]
which leads to the estimate $\varDelta_{jk}[\Psi,S_n] \leq 4 \alpha/(n-j)$.
Applying Theorem \ref{RanPermThm} and observing 
that  $\beta_j \leq 4\alpha^2$ gives the stated bound on $L$. This completes the proof of (i).

For (ii), we may assume without loss of generality that $\bar{u}$, $\bar{v}$, $\bar{u}'$,
$\bar{v}'$ all equal, by shifting $u$, $v$, $u'$, $v'$
if necessary.  This shifts the distributions of~$\Psi$ and~$\Psi'$ but has no effect
on their covariance.

Next observe that for $j,k = 1,\ldots, n$,
\[ \mYCov{u(j)v(X_j), u'(k) v'(X_k)} =
   \biggl(\sum_{i=1}^n v(i) v'(i) \biggr) \frac{u(j) u'(k)}{n}
   \biggl( 1  - \frac{(n+1)\, \mathbf{1}_{j\neq k}}{n-1}\biggr)
\]
where $\mathbf{1}_{j\neq k}$ is the indicator variable
 which equals 1 when $j\neq k$ and 0 otherwise.
Summing this expression over all pairs $(j,k)$ proves the first statement of (ii),
and replacing $\Psi'$ by $\Psi$ completes the proof of (ii).

For part (iii), we calculate that 
\[ 
  \myE{ v(X_1) v(X_2)} = \bar{v}^2 - \frac{1}{n(n-1)} \sum_{i=1}^n (v(i) - \bar{v})^2,
\]
from which (iii) follows. 

For (iv), it is not difficult to prove by induction on $k$ that 
\begin{equation}
\label{induction}
 \myE{ v(X_1) v(X_2)\cdots v(X_k)} = \bar{v}^k + O(n^{-1})\, \|v\|^k,
 \quad\text{for $k=O(1)$}.
\end{equation}
This follows using the fact that, for $k\geq 1$,
\[ 
  \mYE{ v(X_1) v(X_2)\cdots v(X_k) v(X_{k+1})} = 
    \dfrac{1}{n}\, \sum_{i=1}^n \mYE{v(X_1) v(X_2)\cdots v(X_k) \St X_{k+1} = i}\, v(i),
\]
after observing that the average of $\{ v(j) \mid j\neq i\}$ equals $\bar{v} + O(\|v \|/n)$.
It follows that
\begin{equation}
\label{EEjk-bound} \E E_{jk}(\X) = (u(j) + \bar{v})(u(k) + \bar{v}) + O\left(\frac{ \|v\|^2}{n}\right).
\end{equation}
Therefore $E_{jk}(\X)$, $E_{\ell m}(\X)$, $\E E_{jk}(\X)$ and $\E E_{\ell m}(\X)$ are all
$O((\|u \| + \| v\|)^2)$, from which we conclude that $\myCov{E_{jk},E_{\ell m}} = O((\| u\| + \|v\|)^4)$,
for any distinct $j,k$ and distinct $\ell, m$.
In the case that $\{ j,k\}\cap \{\ell,m\}=\emptyset$,  it follows from (\ref{induction})
that
\[ \myE{ E_{jk}(\X) E_{\ell m}(\X)} = 
  (u(j) + \bar{v})(u(k) + \bar{v})(u(\ell) + \bar{v})(u(m) + \bar{v}) 
    + O\left(\frac{(\| u\| + \|v\|)^4}{n}\right).
\]  
The improved bound on $\myCov{E_{jk}(\X),E_{\ell m}(\X)}$ follows directly.

Finally, we prove part (v).  First we calculate
$\myCov{E_{jk}(\X),v'(X_i)}$ under the assumption that $i\not\in \{j,k\}$.
In this case, 
\begin{align*}
& \myE{E_{jk}(\X)\,v'(X_i)}\\
 &= \dfrac{1}{n} \sum_{a=1}^n \myE{(u(j) + v(X_j))(u(k) + v(X_k))\mid X_i = a}\, v'(a)\\
&= \dfrac{1}{n}\sum_{a=1}^n v'(a) \biggl( \Bigl(u(j) + \frac{n\bar{v} - v(a)}{n-1}\Bigr)\Bigl(u(k) + \frac{n\bar{v} -
v(a)}{n-1}\Bigr) \\
         &{\kern16em} - \frac{1}{(n-1)(n-2)}\sum_{b:b\neq a} (v(b) - \bar{v})^2 \biggr)\\
     &= \bar{v}' (u(j) + \bar{v})(u(k) + \bar{v}) -\frac{\bar{v}'}{n(n-1)}\sum_{b=1}^n (v(b) - \bar{v})^2\\
    & \hspace*{4cm} {} - \frac{u(j) + u(k) + 2\bar{v}}{n(n-1)}\, 
     \sum_{a=1}^n v'(a) (v(a) - \bar{v})  + O\left(\frac{\| v\|^2 \| v'\|}{n^2}\right).
\end{align*}
The second line follows from applying (iii) to the restriction of $u$, $v$
to $\{ 1,\ldots, n\}\setminus \{ a\}$.
Subtracting $\E E_{jk}\, \E v'(X_i)$ using (iii), we find that
\[ \mYCov{E_{jk}(\X), v'(X_i)} = -\frac{u(j)+u(k) + 2\bar{v}}{n(n-1)}\, 
               \sum_{a=1}^n (v(a)-\bar{v})(v'(a) - \bar{v}')
 + O\left(\frac{\|v\|^2 \|v'\|}{n^2}\right).
\]

Now suppose that $i\in\{j,k\}$.  When $i=j$, using similar calculations as above, we obtain
\[
\myE{E_{jk}(\X)v'(X_j)}= \frac{1}{n} \sum_{a=1}^n (u(j) + v(a))(u(k) + \bar{v})\, v'(a)
        + O\left(\frac{(\| u\| + \|v\|)^2 \|v'\|}{n}\right)\]
and hence
\[ \mYCov{E_{jk}(\X), v'(X_j)} = (u(k)+\bar{v}) 
     \frac{1}{n} \sum_{a=1}^n (v(a) - \bar{v})( v'(a) - \bar{v}')  
   + O\left( \frac{(\|u\| + \|v\|)^2 \| v'\|}{n}\right).
\]
A similar formula holds when $i=k$.  Now summing over all $i$, we obtain the stated
formula for $\myCov{E_{jk}(\X),\Psi'(\X)}$, completing the proof.
\end{proof}


\nicebreak
\section{Subgraphs isomorphic to a given graph}\label{s:subgraph}

If $H$ is a graph on the vertex set $\{1,\ldots, n\}$, let $\Prob(\dvec,H)$ be the probability that
$G\sim\Gd$ contains $H$ as a subgraph.
The starting point for our arguments is the following result adapted from
 McKay~\cite[Theorem~2.1]{ranx}.  
We state it using the parameters defined in (\ref{R-lambda-delta}) and (\ref{Mt}).

\begin{thm}\label{treeprob}
For any constant $\eta\in(0,\frac12)$  there is some
$\eps_5(\eta) > 0$ such that the following holds
 for every fixed $\eps \in (0,\eps_5(\eta)]$.
Let $\dvec$ be a degree sequence which satisfies~\eqref{curlyA}.
Suppose that $H$ is a graph on the vertex set $\{ 1,\ldots, n\}$ with
$m\leq n^{1+2\eps}$ edges and degree
sequence $\hvec = (h_1,\ldots, h_n)$ 
such that $\norm{\hvec} \leq n^{1/2 + \eps}$.
Then
\begin{equation}
\label{p-expression}
   \Prob(\dvec,H) = \lambda^m \, \exp\( f(\dvec,\hvec) + g(\dvec,H) + O(n^{-1/2+\eta})\),
\end{equation}
 where
\begin{align*}
   f(\dvec,\hvec) &= \frac{(1-\lambda)}{4\lambda}(\mu_1^2 + 2 \mu_1 - 2\mu_2)                 
                   - \frac{(1-\lambda^2)}{6\lambda^2 n} \, \mu_3 
+ \frac{1}{\lambda n}
                     \sum_{j=1}^n (d_j-d)h_j
\\ &{\kern6em}   
                     + \frac{1}{2\lambda^2 n^2} \sum_{j=1}^n (d_j - d) h_j^2
                     - \frac{1}{2\lambda^2 n^2} \sum_{j=1}^n (d_j-d)^2 h_j, \\
   g(\dvec,H) &= - \frac{1}{\lambda(1-\lambda)n^2}
                             \sum_{jk\in E(H)} (d_j - d-h_j+\lambda h_j)
                                                    ( d_k - d-h_k+\lambda h_k).
\end{align*}
\end{thm}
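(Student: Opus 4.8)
This result is McKay's \cite[Theorem~2.1]{ranx}; the plan below reconstructs the route by which such a formula is proved. The idea is to write $P(\dvec,H)$ as a ratio of two counts of degree-constrained subgraphs of dense host graphs, to estimate each count by the saddle-point method, and then to Taylor-expand the ratio. A graph $G$ with degree sequence $\dvec$ that contains $H$ as a subgraph is exactly $H$ together with a graph on the non-edges of $H$ having degree $d_j-h_j$ at vertex~$j$; hence
\[
  P(\dvec,H)=\frac{N(\overline H,\ \dvec-\hvec)}{N(K_n,\ \dvec)},
\]
where $\overline H=K_n\setminus H$ and $N(J,\boldsymbol{s})$ denotes the number of spanning subgraphs of the host graph $J$ with degree sequence $\boldsymbol{s}$.

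First I would establish, or invoke following \cite{ranx} and building on \cite{MW90}, an asymptotic formula for $N(J,\boldsymbol{s})$ in the regime where $J$ is dense and each target $s_v$ is close to $q\,\deg_J(v)$ for a density $q$ bounded away from $0$ and $1$, with the deviations $s_v-q\,\deg_J(v)$ subject to a concentration condition of the type in~\eqref{curlyA}. The tool is to write $N(J,\boldsymbol{s})$ as the constant term of $\prod_{jk\in E(J)}(1+x_jx_k)\big/\prod_v x_v^{s_v}$, that is, a multidimensional contour integral over the torus $|x_v|=1$, and to evaluate it by the saddle-point method: the saddle sits near the point realising the edge parameters $p_{jk}$ that solve $\sum_{k\sim j}p_{jk}=s_j$, the leading term is the product-Bernoulli total probability times a local-CLT normalisation $\det(\cdot)^{-1/2}$ of the degree covariance, and the stated corrections arise by expanding the integrand to the required order about the saddle and bounding the contribution of the rest of the torus. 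This is the analytic engine of the argument.

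Then I would apply this formula twice, with $(J,\boldsymbol{s})=(K_n,\dvec)$ and with $(J,\boldsymbol{s})=(\overline H,\dvec-\hvec)$ --- for the latter the relevant density at vertex~$j$ is $(d_j-h_j)/(n-1-h_j)=\lambda+O(n^{-1/2+\eps})$ --- and divide. The bulk factors leave $\lambda^m$ (each of the $m$ edges of $H$ removed from the host contributes an asymptotic factor~$\lambda$), the local-CLT determinants nearly cancel, and the surviving corrections, after expanding the difference, regroup into $f(\dvec,\hvec)+g(\dvec,H)+O(n^{-b})$: the term $f$ collects everything depending on the data only through the degree statistics $\mu_t$ and the shift $\dvec\mapsto\dvec-\hvec$, while $g$ isolates the genuinely $H$-structure-dependent part, which appears because the host $\overline H$ is not vertex-transitive and is governed by the second-order interaction between removed edges, indexed by $jk\in E(H)$. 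Throughout, the hypotheses $\delta=O(n^{1/2+\eps})$, $\|\hvec\|=O(n^{1/2+\eps})$, $m=O(n^{1+2\eps})$ and $\min\{d,n-d-1\}\ge n/(3a\log n)$, together with a small $\eps=\eps(a,b)$ and the constraint $a+b<\nfrac{1}{2}$, are precisely what force every truncated term --- from the saddle-point expansion and from the expansion of the ratio --- to be $O(n^{-b})$ rather than merely $o(1)$.

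The hard part will be the saddle-point estimate of $N(\overline H,\dvec-\hvec)$ for a host $\overline H$ that is dense but otherwise arbitrary: bounding the integrand away from the saddle when the host is irregular, and checking that the local expansion may be truncated at the stated order uniformly in~$H$, is the delicate analytic point. A secondary difficulty is the bookkeeping in the division step --- identifying exactly which corrections are host-transitive, so cancel into $f$, and which survive as the edge-sum $g$ --- and then reconciling the logarithmic lower bound on the density with the polynomial error savings, which is where the constraint $a+b<\nfrac{1}{2}$ enters.
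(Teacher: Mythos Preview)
Your identification is correct: the paper does not prove this theorem at all but simply quotes it verbatim as \cite[Theorem~2.1]{ranx}, so there is no proof in the paper to compare against. Your sketch of the underlying argument --- writing $P(\dvec,H)$ as the ratio $N(\overline H,\dvec-\hvec)/N(K_n,\dvec)$ and evaluating each count by the saddle-point method from~\cite{MW90,ranx} --- is an accurate outline of how McKay obtains the formula, and your remarks about where the difficulties lie (the irregular host $\overline H$, the bookkeeping that separates $f$ from $g$, and the role of the constraint $a+b<\tfrac12$) are on target. For the purposes of the present paper, though, a one-line citation suffices.
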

\begin{proof} Theorem~2.1 in~\cite{ranx} is stated slightly differently.
It supposes two constants $a,b>0$ with $a+b<\frac12$ and the second
part of~\eqref{curlyA} reads
\[   \min\{d,n-d-1\} \geq \frac{n}{3a \log n}. \]
If $\eps=\eps(a,b)$ defined in \cite[Theorem~2.1]{ranx} 
 then \eqref{p-expression} holds with the error term $O(n^{-b})$.
To obtain our formulation, take $a=2\eta^2$, $b=\frac12-\eta$ 
and $\eps_5(\eta) = \eps(a,b)$. 
Clearly, for any $\eps<\eps_5(\eta)$ formula   \eqref{p-expression} also holds with the same error term, since all assumptions depending on $\eps$ become more strict.
\end{proof}

\begin{lemma}\label{uniformity}
Let $A=\bigcup_{k=1}^\infty A_k\subseteq\Complexes$ be such that,
for each $k$,
$A_k\ne\emptyset$  and $\sup_{a\in A_k} \abs{a} <\infty$.
Suppose that $a_k=O(1)$ as $k\to\infty$ for every sequence
$a_1,a_2,\ldots$ with $a_k\in A_k$ for each~$k$.
Then $\sup_{a\in A} \,\abs{a} < \infty$.
\end{lemma}
\begin{proof}
For each $k$ there is some
$a'_k\in A_k$ such that $\abs{a'_k} \geq \sup_{a\in A_k} \abs{a}-1$.
Now we have
$\sup_{a\in A} \abs{a} \leq \sup_{k\geq 1} \abs{a'_k}+1<\infty$. 
\end{proof}

\begin{remark}\label{uniformremark}
Lemma~\ref{uniformity} will be useful whenever we need to sum
Theorem~\ref{treeprob}, or similar theorems, over many values of
the parameters.  For fixed $\eta$, there are only finitely many
degree sequences $\dvec$ and graphs $H$ for each~$n$ that
satisfy the conditions.  Therefore, applying Lemma~\ref{uniformity}
to the sets consisting of the error terms in Theorem~\ref{treeprob}   
for these $\dvec$ and $H$, scaled by a factor $n^{1/2-\eta}$,
shows that
the error term is uniform over $\dvec$ and~$H$.  That is, there is
a function $C(\eta)$,
not depending on any other parameters, such that the absolute
value of the error term is bounded above by $C(\eta)n^{-1/2+\eta}$.
\end{remark}

In order to compute the expected number of subgraphs isomorphic to $H$,
we must sum $\Prob(\dvec,H)$ over all possible locations of $H$.
We will find it convenient to average over permutations of
the degree sequence $\dvec$ rather than over labellings of the
subgraph $H$; by symmetry, this is equivalent.

For a permutation $\sigma\in S_n$, let
$\dvec^{\,\sigma}=(d_{\sigma_1},\ldots, d_{\sigma_n})$
denote the permuted degree sequence, and let
\begin{align*}
   f_{\hvec}(\sigma) &= \frac{(1-\lambda)}{4\lambda}(\mu_1^2 + 2 \mu_1 - 2\mu_2)                 
- \frac{(1-\lambda^2)}{6\lambda^2 n} \, \mu_3 
 +  \frac{1}{\lambda n} \sum_{j=1}^n\, (d_{\sigma_j}-d)h_j \\ 
             &\qquad {} 
                                          + \frac{1}{2\lambda^2 n^2} \sum_{j=1}^n\, (d_{\sigma_j} - d) h_j^2
                     - \frac{1}{2\lambda^2 n^2} \sum_{j=1}^n\, (d_{\sigma_j}-d)^2 h_j, \\
   g_{H}(\sigma) &=  - \frac{1}{\lambda(1-\lambda)n^2}
                             \sum_{jk\in E(H)} \! (d_{\sigma_j} - d-h_j+\lambda h_j)
                                                    ( d_{\sigma_k} - d-h_k+\lambda h_k).
\end{align*}
Since $f_{\hvec}(\sigma) = f(\dvec^\sigma,\hvec)$ and 
$g_{H}(\sigma) = g(\dvec^\sigma,H)$,
Theorem~\ref{treeprob} implies that
the expected number of subgraphs isomorphic to $H$ in a uniformly random graph
with degree sequence $\dvec$ is
\begin{equation}
    (1 + O(n^{-1/2+\eta})) \,\frac{n!}{|\Aut(H)|}\, \lambda^m\, \mYE{
               \exp( f_{\hvec}(\X) + g_H(\X)) },
\label{start-point}
\end{equation}
where the expectation is taken with respect to a uniformly random element $\X$ of $S_n$.
Here we have used the uniformity of the error term $O(n^{-1/2+\eta})$ in
Theorem~\ref{treeprob}, as explained in Remark~\ref{uniformremark}.

Define 
\begin{equation}\label{def_eps1}
	\eps_1(\eta) = \min\left\{\eps_5(\eta),\tfrac1{12}\eta\right\},
\end{equation}
where $\eps_5(\eta)$ is provided by Theorem~\ref{treeprob}.
Before proving Theorem~\ref{subcount}, we apply the results of Section~\ref{s:moments}
to obtain the following expressions.

\begin{lemma}\label{allvar}
Let $\eta\in(0,\frac12)$ be constant. 
If assumptions \eqref{curlyA} and \eqref{assumption}
hold with  $\eps \in (0,\eps_1(\eta)]$, then
\begin{align*}
     \E f_{\hvec}(\X) 
    &= \frac{(1-\lambda)}{4\lambda}(\mu_1^2 + 2 \mu_1 - 2\mu_2)                 
                   - \frac{(1-\lambda^2)}{6\lambda^2 n} \, \mu_3 
  -\frac{ R }{2\lambda^2 n}\mu_1,\\
   \E g_{H}(\X) 
     &= - \frac{1-\lambda}{\lambda n^2} \sum_{jk\in E(H)} h_j h_k + O(n^{-1/2+\eta}), \\
\mYVar{f_{\hvec}(\X)& {} +g_{H}(\X)} =
   \frac{R }{\lambda^2 n}(\mu_2  - \mu_1^2)
   + O(n^{-1/2+\eta}).
\end{align*}
\end{lemma}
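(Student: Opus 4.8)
The plan is to compute each of the three quantities in Lemma~\ref{allvar} directly from the explicit formulas for $f_{\hvec}(\sigma)$ and $g_H(\sigma)$, using the moment identities of Lemma~\ref{ranper} with appropriate choices of the functions $u,v$. Recall that $f_{\hvec}(\sigma)$ is an affine-plus-deterministic combination of three sums of the form $\Psi_{u,v}(\sigma) = \sum_j u(j) v(\sigma_j)$: namely with $(u,v)$ equal to $(\hvec, \dvec-d)$, to $(\hvec^{\circ 2}, \dvec-d)$, and to $(\hvec, (\dvec-d)^{\circ 2})$ (up to scalar factors), plus the constant term involving $\mu_1,\mu_2,\mu_3$. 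So $\E f_{\hvec}(\X)$ follows by applying Lemma~\ref{ranper}(i), which gives $\E \Psi_{u,v}(\X) = n\bar u\,\bar v$; since $\dvec-d$ has mean zero, the first and third of these sums vanish in expectation, while the middle one contributes $\frac{1}{2\lambda^2 n^2}\cdot n \cdot \mu_2 \cdot \overline{(\dvec-d)} = 0$ as well — wait, more carefully: $\sum_j (d_{\sigma_j}-d)h_j^2$ has $u = \hvec^{\circ 2}$, $v = \dvec-d$, so $\E = n \mu_2 \cdot 0 = 0$; and $\sum_j (d_{\sigma_j}-d)^2 h_j$ has $u = \hvec$, $v = (\dvec-d)^{\circ 2}$, so $\E = n\mu_1 \cdot \frac{1}{n}\sum_k(d_k-d)^2 = n\mu_1 R$. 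Thus only the last term survives with coefficient $-\frac{1}{2\lambda^2 n^2}\cdot n\mu_1 R = -\frac{R\mu_1}{2\lambda^2 n}$, matching the claim. The constant term passes through unchanged.

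For $\E g_H(\X)$, I expand the product in the definition of $g_H$: writing $p_j = d_{\sigma_j}-d$ and $q_j = -h_j + \lambda h_j = -(1-\lambda)h_j$, the summand over $jk \in E(H)$ is $(p_j + q_j)(p_k + q_k) = p_j p_k + p_j q_k + q_j p_k + q_j q_k$. For each edge $jk$, taking expectations over the random permutation: $\E(q_j q_k) = q_j q_k = (1-\lambda)^2 h_j h_k$ is deterministic; $\E(p_j q_k) = q_k \E(d_{\sigma_j}-d) = 0$ and likewise $\E(q_j p_k)=0$; and $\E(p_j p_k) = \E\big((d_{\sigma_j}-d)(d_{\sigma_k}-d)\big) = -\frac{1}{n-1}R$ by the standard second-moment identity for sampling without replacement (this is the $k\neq j$ case of the covariance computation underlying Lemma~\ref{ranper}(ii), applied with $u=v=\dvec-d$). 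Summing over the $m$ edges gives $\E g_H(\X) = -\frac{1}{\lambda(1-\lambda)n^2}\big( (1-\lambda)^2 \sum_{jk\in E(H)} h_j h_k - \frac{mR}{n-1}\big) = -\frac{1-\lambda}{\lambda n^2}\sum_{jk\in E(H)} h_j h_k + \frac{mR}{\lambda(1-\lambda)(n-1)n^2}$, and the error term must be checked to be $O(n^{-b})$: using $m = O(n^{1+2\eps})$, $R \leq \delta^2 = O(n^{1+2\eps})$, and the lower bounds on $\lambda, 1-\lambda$ from \eqref{curlyA}, the stray term is $O(n^{-2+4\eps}/(\lambda(1-\lambda)))$, which is absorbed once $\eps$ is small enough relative to the constant $a$ controlling $\lambda(1-\lambda) \geq c/\log n$; assumption~\eqref{assumption} is not even needed here.

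The variance computation is the most delicate step and I expect it to be the main obstacle. I would write $f_{\hvec}(\X) + g_H(\X) = C + \frac{1}{\lambda n}\Psi_{\hvec,\dvec-d}(\X) + (\text{two lower-order }\Psi\text{-terms}) + g_H(\X)$ where $C$ is constant, and use bilinearity of covariance. The dominant contribution is $\Var\big(\frac{1}{\lambda n}\Psi_{\hvec,\dvec-d}(\X)\big) = \frac{1}{\lambda^2 n^2}\cdot\frac{1}{n-1}\big(\sum_j(h_j-\mu_1)^2\big)\big(\sum_k(d_k-d)^2\big)$ by Lemma~\ref{ranper}(ii); since $\sum_j(h_j-\mu_1)^2 = n(\mu_2 - \mu_1^2)$ and $\sum_k(d_k-d)^2 = nR$, this equals $\frac{n^2(\mu_2-\mu_1^2)R}{\lambda^2 n^2(n-1)} = \frac{R(\mu_2-\mu_1^2)}{\lambda^2 n}(1 + O(1/n))$, which is the claimed main term. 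Everything else must be shown to be $O(n^{-b})$: (a) the variances and cross-covariances of the two lower-order $\Psi$-terms (coefficients $\frac{1}{2\lambda^2 n^2}$) with each other and with the main term — each is a product of two coordinate sums divided by $n-1$, times the coefficient, so one bounds $\sum_j(h_j^2)^2 = n\mu_4 = O(n \cdot \|\hvec\|^4) = O(n^{3+4\eps})$, $\sum_k(d_k-d)^4 \leq \delta^2 \sum(d_k-d)^2 = O(n^{2+4\eps})$, etc., and checks the powers of $n$ and $\lambda$ work out; (b) $\Var(g_H(\X))$, which requires expanding $g_H$ as a sum over edges of terms $(p_j+q_j)(p_k+q_k)$ and controlling the covariance of two such edge-terms — this is exactly the content of Lemma~\ref{ranper}(iv) with the $E_{jk}$-type functions (with $u \leftrightarrow$ the $q$-part, $v \leftrightarrow$ the $p$-part), giving $O((\|u\|+\|v\|)^4/n)$ for disjoint edge-pairs and $O((\|u\|+\|v\|)^4)$ otherwise; summing over $O(m^2)$ disjoint pairs and $O(m\,\dmax^H)$ adjacent pairs, with $\|u\|+\|v\| = O(\delta + \|\hvec\|) = O(n^{1/2+\eps})$, and the $\frac{1}{\lambda^2(1-\lambda)^2 n^4}$ prefactor; (c) the covariance of $g_H(\X)$ with the $\Psi$-terms, handled by Lemma~\ref{ranper}(v). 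The bookkeeping of which assumption (particularly \eqref{assumption}, $m = O(n^{1+2\eps})$, and the degree bounds) is invoked to push each of these into $O(n^{-b})$, and choosing $\eps = \eps(a,b)$ small enough at the end, is the part that needs genuine care; the individual estimates are routine applications of Lemma~\ref{ranper}.
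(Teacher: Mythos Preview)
Your approach is correct and essentially the same as the paper's: compute $\E f_{\hvec}$ and $\E g_H$ via Lemma~\ref{ranper}(i) and~(iii), identify $\Var\bigl(\frac{1}{\lambda n}\Psi_{\hvec,\dvec-d}(\X)\bigr)$ as the sole main variance term using Lemma~\ref{ranper}(ii), and absorb all remaining (co)variances into $O(n^{-b})$ via parts (ii), (iv), (v). The one point worth flagging in your ``bookkeeping'' paragraph is that a naive power-count does \emph{not} suffice for the covariances of $\frac{1}{\lambda n}\Psi_{\hvec,\dvec-d}$ with the $\frac{1}{2\lambda^2 n^2}\Psi_{\hvec^{\circ 2},\dvec-d}$ term and with $g_H$: both come out as $O\bigl(\delta^2\mu_3/(\lambda^3 n^2)\bigr)$, and this is exactly where assumption~\eqref{assumption} is genuinely required (the paper makes this explicit).
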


\begin{proof}
We repeatedly use the following bounds in our estimates: 
\[ 
  n \mu_t \leq 2 \, \|h\|^{t-1}\, m  \leq 2n^{(t+1)(1/2 + \eps)} \quad \text{ and }  
    \quad R \leq \delta^2 \leq n^{1+2\eps}.
\]

The expression for $\E f_{\hvec}(\X)$ follows directly from applying
Lemma~\ref{ranper}(i) to the terms of $f_{\hvec}(\X)$. 
(The first two terms are constants, the third and fourth term both give zero
since the average of $d_{\sigma_j}-d$ is zero, and the fifth term of $f_{\hvec}$ provides
the final term in the expected value.)
Similarly, using Lemma~\ref{ranper}(iii) and \eqref{curlyA} we have
\[
     \E g_{H}(\X) 
     = - \frac{1-\lambda}{\lambda n^2} \sum_{jk\in E(H)} h_j h_k
      + \frac{R}{\lambda(1-\lambda)n^2(n-1)},
\]
which matches the given expression after applying the assumptions.

Recall that for real random variables $X_1,\ldots,X_t$
we have
\[
    \MYVar{\,\sum_{j=1}^t X_j} = \sum_{j,k=1}^t\myCov{X_j,X_k}.
\]

For all positive integers $i,\ell$, and for all $jk\in E(H)$,
define the functions $\Psi^{(i,\ell)}, E_{jk}:S_n\rightarrow \Reals $ by
\begin{align*} \Psi^{(i,\ell)}(\sigma) &= \sum_{j=1}^n \,(d_{\sigma_j} - d)^i\, h_j^\ell,
 \\
   E_{jk}(\sigma) &= \((\lambda - 1)h_j + d_{\sigma_j} - d\)
      \((\lambda - 1)h_k + d_{\sigma_k} - d\)
\end{align*}
for all $\sigma\in S_n$.
Using Lemma~\ref{ranper}(ii) with $u(j) = h_j$ and $v({\sigma_j}) = d_{\sigma_j}-d$, we find that
\[ \MyVar{\dfrac{1}{\lambda n}\Psi^{(1,1)}(\X)} 
= \frac{R(\mu_2 - \mu_1^2)}{\lambda^2 (n-1)}
= \frac{R(\mu_2 - \mu_1^2)}{\lambda^2 n}  + O(n^{-1/2+\eta}).
\]

Applying \eqref{assumption} and Lemma~\ref{ranper}(v) with
$u(j) = (\lambda-1)h_j$, $u'(j) = h_j$, and $v(j) = v'(j) = d_j - d$, we obtain
\begin{align*}
  \sum_{jk\in E(H)}&\mYCov{ \Psi^{(1,1)}(\X),E_{jk}(\X)} \\
    &= O\( \mu_1 (\norm u+\norm v)^2 \norm{u'}\norm{v'}\)
          + R (\lambda-1) \sum_{jk\in E(H)} \( (h_j-\mu_1)h_k + (h_k-\mu_1)h_j \) \\
    &= O(n^{2+6\eps}).
\end{align*}
Consequently,
\[
    \MyCov{\dfrac{1}{\lambda n}\Psi^{(1,1)}(\X),
          -\dfrac{1}{\lambda(1-\lambda)n^2} E_{jk}(\X)}
  = O(n^{-1/2+\eta}).
\]

Observe also that
\begin{align}
& \MYVar{-\dfrac{1}{\lambda(1-\lambda)n^2}\sum_{jk\in E(H)} E_{jk}(\X)} \notag\\
  &= \frac{1}{\lambda^2 (1-\lambda)^2 n^4}\, \sum_{jk\in E(H)}\, \sum_{i\ell\in E(H)} 
    \myCov{E_{jk}(\X), E_{i\ell}(\X)} \notag\\
  &= \frac{1}{\lambda^2(1-\lambda)^2 n^4}\, \left( m\norm{\hvec} O\(( (1-\lambda)\norm{\hvec} + \delta)^4\)
                         +  m^2\, O\left(\frac{( (1-\lambda)\norm{\hvec} + \delta)^4}{n}\right)\right) \notag\\
  &= O(n^{-1/2+\eta}). \label{Evar}
\end{align}
This follows from Lemma~\ref{ranper}(iv), using the fact that there are at most $m\norm{\hvec}$ pairs of
adjacent edges of $H$ and at most $m^2$ pairs of non-adjacent edges of $H$.

We can now verify that all the remaining contributions to
$\myVar{f_{\hvec}(\X) +g_{H}(\X)}$
are $O(n^{-1/2+\eta})$.
Using Lemma~\ref{ranper}(ii), we have
\begin{align}
 \MyVar{\dfrac{1}{2\lambda^2 n^2}\Psi^{(1,2)}(\X)}
 &=  \frac{(\mu_4-\mu_2^2)R}{4\lambda^4n^2(n-1)} = O(n^{-1/2+\eta}) \label{Bvar}\\
  \MyVar{ -\dfrac{1}{2\lambda^2n^2}\Psi^{(2,1)}(\X)}
 &= \frac{(\mu_2-\mu_1^2)(R_4-R^2)}{4\lambda^4n^2(n-1)} = O(n^{-1/2+\eta}), \label{Cvar}
\end{align}
where $R_4=\sum_{j=1}^n (d_j-d)^4$.
For any two real random variables $Z,Z'$ we have
$\Abs{\myCov{Z,Z'}} \leq \max\{\Var Z, \Var Z'\}$, so the three
covariances involving two of the quantities in~(\ref{Evar}--\ref{Cvar}) are also
$O(n^{-1/2+\eta})$.
The only remaining covariances are
\begin{align*}
   \MyCov{\dfrac{1}{\lambda n}\Psi^{(1,1)}(\X), \dfrac{1}{2\lambda^2 n^2}\Psi^{(1,2)}(\X)}
   &= \frac{R(\mu_3 - \mu_1\mu_2)}{2\lambda^3 n(n-1)} 
      = O(n^{-1/2+\eta}) \text{~~and}\\
   \MyCov{\dfrac{1}{\lambda n} \Psi^{(1,1)}(\X), -\dfrac{1}{2\lambda^2n^2}\Psi^{(2,1)}(\X)}
   &= -\frac{(\mu_2-\mu_1^2)\sum_{j=1}^n (d_j-d)^3}{2\lambda^3n^2(n-1)}
   = O(n^{-1/2+\eta}),
\end{align*}
using~\eqref{assumption}, 
since $R\leq \delta^2$ and $\mu_1^2\leq\mu_2\leq\mu_3$. 
This completes the proof.
\end{proof}

\begin{proof}[Proof of Theorem~\ref{subcount}]

Define $\eps_1(\eta)$ as in \eqref{def_eps1}.
We will apply Theorem~\ref{RanPermThm} to estimate (\ref{start-point}). 
 The expected value and variance of $f_{\hvec} + g_H$ are given in
Lemma~\ref{allvar}.  It remains to prove that 
\[ \sum_{j=1}^{n-1} \,(\nfrac{1}{6} \alpha_j^3 + \nfrac{1}{3} \alpha_j \beta_j + \nfrac{5}{8} \alpha_j^4 + \nfrac{5}{8} \beta_j^2
   \)
  = O(n^{-1/2+\eta}),
\]
where $\alpha_{j} = \alpha_j[f_{\hvec} + g_H,S_n]$,  $\beta_j = \sum_{k=j+1}^{n-1} \alpha_k\varDelta_{jk}$ and $\varDelta_{jk} = \varDelta_{jk}[f_{\hvec} + g_H,S_n]$.

Without loss of generality, we can assume that
\[ 
h_1 \geq h_2 \geq \cdots \geq h_n.
\]
We calculate that, for $1\leq j< a\leq n$,
\[ \| D^{(j\, a)} f_{\hvec}\| = O\left(\frac{\delta h_j}{\lambda n} \right).\]
Therefore, $\alpha_j[ f_{\hvec},S_n] =  O\left(\dfrac{\delta h_j}{\lambda n} \right)$.
Observe also that 
$ \| D^{(k\, b)}\, D^{(j\, a)} f_{\hvec}\| = 0$
whenever $j,k,a,b$  are distinct. Otherwise, for $1\leq j< a\leq n$ and
$1\leq k < b\leq n$ with $j,k,a,b$ not all distinct, we use the bound 
\[
\| D^{(k\, b)}\, D^{(j\, a)} f_{\hvec}\| \leq 2 \| D^{(j\, a)} f_{\hvec}\| = O\left(\frac{\delta h_j}{\lambda n}
\right).
\]
Thus, $\varDelta_{jk}[ f_{\hvec},S_n] =  O\left(\frac{\delta h_j}{\lambda n(n-j)} \right)$.

Next we need to consider $g_{H}$, and calculate that
\[ \| D^{(j\, a)} g_{H}\| = O\left(\frac{(\delta+h_j)^2 h_j}{\lambda(1-\lambda) n^2}\right), \]
since only the terms of $g_H$ corresponding to edges incident with $j$ or $a$ can contribute.
This  gives us 
$ 
\alpha_j[g_H,S_n] = O\left(\dfrac{(\delta+h_j)^2 h_j}{\lambda(1-\lambda) n^2}\right) = O\left(n^{-1/2 + 4\eps}\right).
$

Suppose that $j,k,a,b$ are all distinct, with $j<a$ and $j<k<b$.
If $\{ jk,\, jb,\, ab,\, kb\}\cap E(H) = \emptyset$ then
\[ \| D^{(k\, b)}\, D^{(j\, a)} g_{H}\| = 0,\]
and otherwise
\[ \| D^{(k\, b)}\, D^{(j\, a)} g_{H}\| = O\left(\frac{(\delta  + h_j)^2}{\lambda(1-\lambda) n^2}\right).
\]
Therefore,
\[ \varDelta_{jk}[g_H,S_n)] = O\left(\frac{(\delta  + h_{j})^2}{\lambda(1-\lambda) n^2}\right)\,
         \left(\mathbf{1}_{jk\in E(H)} + \frac{h_j}{n-k}\right).\]
To see this, we recall that 
\[
\varDelta_{jk}[g_H,S_n] = \frac{1}{(n-j)(n-k)}
                             \sum_{a=j+1}^n\,\sum_{b=k+1}^n 
  \| D^{(k\, b)} D^{(j\, a)} g_H\| 
\]
 and observe that there are at most $h_j$ choices for $b>k$ such that $jb\in E(H)$. Also, 
 there are at most $n-j$ choices for $a$: dividing the product of these by $(n-k)(n-j)$
leads to the term $h_j/(n-k)$.  Similarly, there are at most $h_k$
choices for $a>j$ such that $ka\in E(H)$, and then at most $n-k$ choices for $b>k$.
If $jk\in E(H)$ then there are at most $(n-k)(n-j)$ choices for $a, b$,
and there are at most $(n-k) h_k$ edges with at least one end-vertex in $\{ k+1,\ldots, n\}$
(this counts the choices for $ab\in E(H)$).
The ``diagonal'' terms (where $a=b$ or $a=k$) satisfy 
\[ 
\| D^{(k\, b)}\, D^{(j\, a)} g_{H}\| \leq 2 \|D^{(j\, a)} g_{H}\| = O\left(\frac{(\delta+h_j)^2h_j}{\lambda(1-\lambda)
n^2}\right).
\]
So their contribution to $\varDelta_{jk}(g_H,S_n)$ is bounded by
$O\left(\dfrac{(\delta + h_{j})^2h_j}{\lambda(1-\lambda) n^2 (n-j)}\right)$.
Combining estimates above and recalling \eqref{triangle_aD}, we  conclude
\begin{align*}
\alpha_{j}  &= O\biggl(\frac{\delta h_j}{\lambda n} + n^{-1/2 + 4\eps}\biggr), \hspace{60mm}  1 \leq j\leq n, \\
\varDelta_{jk} &=  O\biggl( \frac{\delta h_j}{\lambda n(n-j)} + \frac{(\delta  + h_{j})^2}{\lambda(1-\lambda) n^2}\,
         \biggl(\mathbf{1}_{jk\in E(H)} + \frac{h_j}{n-k}\biggr)\biggr),  \ \ \ \ \ 1 \leq j<k\leq n.
\end{align*}

Using the inequality  
$(|x|+|y|)^3 \leq 4(|x|^3 + |y|^3)$ for each term of the sum, we get
\begin{equation}
 \label{Rk3} \sum_{j=1}^{n-1} \alpha_j^3 =
  \sum_{j=1}^{n-1} O\biggl(\biggl(\frac{\delta h_j}{\lambda n} + n^{-1/2 +4 \eps}\biggr)^{\!\!3\,}\,\biggr) =
   O\biggl(\frac{\delta^3 \mu_3}{\lambda^3 n^2} + n^{-1/2+12\eps}\biggr).
\end{equation}
This is $O(n^{-1/2+\eta})$ by (\ref{assumption}) 
and our assumption $\eps\leq \frac1{12}\eta$. We now want to show that the other error terms
from Theorem~\ref{RanPermThm}
all fit inside this bound too.

Now
\[
  \sum_{k=j+1}^{n-1} \mathbf{1}_{jk\in E(H)} \leq h_j,\qquad \qquad \sum_{k=j+1}^{n-1} \frac{1}{n-k} \leq 1+\log n,
\]
so Lemma~\ref{Lemma_Perm} gives
\begin{align*} 
  \beta_j &= 2\sum_{k=j+1}^{n-1} \alpha_k \varDelta_{jk}
= O\left( \frac{\delta^2 h_j^2}{\lambda^2 n^2} + \frac{(\delta+ h_{j})^2 \, \delta\, h_j^2\, \log
n}{\lambda^2(1-\lambda) n^3}\right)\\
   &= O\biggl( \frac{\delta^2 h_j^2}{\lambda^2 n^2} + \frac{\delta h_j^4 \log n}{\lambda^2  (1-\lambda) n^3}  \biggr)
= O\biggl( \biggl(\frac{\delta h_j}{\lambda n} + \frac{h_j^3 \log n}{\lambda (1-\lambda) n^2}  \biggr)^{\!\!2\,\,}\biggr)   \\ &= O\biggl( \biggl( \frac{\delta h_j}{\lambda n} + n^{-1/2+4\eps}\biggr)^{\!\!2\,\,} \biggr).
\end{align*}
From this we find that
\[ \sum_{j=1}^{n-1} \alpha_j \beta_j = \sum_{j=1}^{n-1} O\biggl(\biggl(\frac{\delta h_j}{\lambda n} + n^{-1/2 +4
\eps}\biggr)^{\!\!3\,\,}\biggr)
\]
which is $O(n^{-1/2+\eta})$, similarly to \eqref{Rk3}.

For the two remaining terms, note that
\[
\sum_{j=1}^{n-1} O\biggl(\biggl(\frac{\delta h_j}{\lambda n} + n^{-1/2 +4 \eps}\biggr)^{\!\!4\,\,}\biggr)  = 
\sum_{j=1}^{n-1} O\biggl(\biggl(\frac{\delta h_j}{\lambda n} + n^{-1/2 +4 \eps}\biggr)^{\!\!3\,\,}\biggr)
\]
whenever the RHS is $O(1)$, and furthermore both $\sum_{j=1}^{n-1} \alpha_j^4$ and $\sum_{j=1}^{n-1} \beta_j^2$ are covered by
$O(n^{-1/2+\eta})$.
Applying  Theorem~\ref{RanPermThm}  and using Lemma \ref{allvar} completes the proof.
\end{proof}

\begin{proof}[Proof of Corollary \ref{Cor:subcount}.]
To show \eqref{sub-1}, we recall that, by assumption, 
$\mu_3 \leq \lambda^2 n^{1/2+\eta}$.
Hence
\[
 \frac{1-\lambda}{\lambda n^2} \!\sum_{jk \in E(H)} h_j h_k  \leq 
   \frac{1-\lambda}{ 2\lambda n^2}\!
    \sum_{jk \in E(H)} (h_j^2 + h_k^2)  = 
    \frac{1-\lambda}{2\lambda n} \mu_3 = O(n^{-1/2+\eta}).
\]
To prove the second part of the corollary, we can use the bound
$R \leq \delta^2 \leq n^{1+2\eps}$ and observe that $\mu_1\leq \mu_2 $
since each $h_j$ is a natural number,  and $\mu_1^2 \leq \mu_2$ by the power mean inequality.
\end{proof}

\begin{proof}[Proof of Corollary \ref{Cor:regularsubgraphs}.]
 Part (a) is a simple application of Corollary~\ref{Cor:subcount}.
 
 To prove part (b), note that the argument of the exponential in
 part (a) depends only on the degree~$h$, and not on the 
  finer structure of~$H$, within the error term. 
 Therefore, using the logic in Remark~\ref{uniformremark},
 the expected number of spanning $h$-regular subgraphs is
 \[
    \operatorname{RG}(n,h) \, \lambda^m
    \exp\biggl( -\frac{1-\lambda}{4\lambda} h(h-2) - \frac{Rh}{2\lambda^2n}
               + O(n^{-1/2+\eta}) \biggr),
 \]
 where $\operatorname{RG}(n,h)$ is the number of labelled regular graphs of order~$n$
 and degree $h$.
 From~\cite{bdmreg} we know that
 \[
     \operatorname{RG}(n,h) = \frac{(nh)!}{(nh/2)!\, 2^{nh/2} (h!)^n}
        \exp\biggl( -\frac{h^2-1}{4} + O(h^3/n) \biggr).
 \]
 Now apply Stirling's formula and observe that
$h^3/n\leq n^{-1+6\eps}\leq n^{-1/2+\eta}$, by (\ref{def_eps1}).
\end{proof}


\nicebreak
\section{Spanning trees }\label{s:dense}

As another application of our results, we calculate the expected
number of spanning trees of $G\sim\Gd$  where the degree sequence $\dvec=(d_1,\ldots,d_n)$ 
satisfies~\eqref{curlyA}. Recall the parameters defined in (\ref{R-lambda-delta}).

\subsection{Plan of attack}

For some $\eps>0$, to be chosen later, define
\begin{align*}
   D &= \bigl\{ (h_1,\ldots,h_n)\in \{1,2,\ldots, n-1\}^n 
                 : h_1+\cdots+h_n=2n-2 \bigr\}, \\
   D_\good &= \bigl\{ (h_1,\ldots,h_n)\in D 
                  : h_1,\ldots,h_n\leq n^{3\eps} \bigr\}, \\
   D_\bad &= D \setminus D_\good.
\end{align*}
Let $\calT$ be the set of all labelled trees with $n$ vertices.
For $\hvec\in D$, let $\calT_{\hvec}$ be the set of all $T\in\calT$
with degree sequence~$\hvec$,
and note that every $T\in\calT$ belongs to $\calT_{\hvec}$ for
some $\hvec\in D$.
It is well known that the number of trees with degree sequence $\hvec$ is
\begin{equation}
\label{nr-x}
|\calT_{\hvec}|=  \binom{n-2}{h_1{-}1,\ldots,h_n{-}1}
\end{equation}
(see~\cite[Theorem 3.1]{moon}).
Also define $\calT_\good = \bigcup_{\hvec\in D_\good} \calT_{\hvec}$
and $\calT_\bad = \bigcup_{\hvec\in D_\bad} \calT_{\hvec}$.
A tree is called \emph{good} if it belongs to $\calT_\good$, and otherwise
it is bad.

Our approach will be to write the expected number of spanning trees in
a uniformly random graph with degree sequence $\dvec$ as
\begin{equation} 
  \sum_{T\in \calT} \Prob(\dvec,T) = \sum_{T\in \calT_{\good}} \Prob(\dvec,T) +
                      \sum_{T\in \calT_{\bad}} \Prob(\dvec,T).
\label{approach}
\end{equation}
	Theorem~\ref{treecount}  follows immediately from Lemma \ref{l:good_trees}, which 
	counts good spanning trees, and Lemma \ref{l:bad_trees}, which counts bad spanning trees.
In fact, bad spanning trees will turn out to be rare so the second sum will
contribute a negligible amount relative to the first sum.

\subsection{The expected number of good spanning  trees}\label{s:good-trees}

It will be useful to define a random variable related to the
degree sequence of a tree uniformly chosen from $\calT$, or from $\calT_{\hvec}$.
Since we are only interested in good trees, we will also consider the truncation
of these random vectors, where any entry
larger than $\lfloor n^{3\eps} \rfloor$ is replaced by~$\lfloor n^{3\eps}\rfloor$.

\nicebreak
\begin{lemma}\label{distributions}
Let $\X=(X_1,\ldots,X_n)$ be the degree sequence of
a random tree uniformly chosen from~$\calT$.
 Then, for any fixed $\eps>0$, the following hold.
\begin{itemize}\itemsep=0pt
\item[\emph{(i)}]
 The random vector $(X_1-1,\ldots, X_n-1)$
has a multinomial distribution with parameters $m=n-2$,
$k=n$, $\lambda_1 = \cdots = \lambda_k = 1$, in the notation of \emph{(\ref{multinomial})}.
\item[\emph{(ii)}]
Next, consider a random variable $\Y\in\{0,1,2,\ldots\,\}^n$,
whose components are
i.i.d.\ Poisson variables with mean~1. For each $\yvec$, we have that 
\[ \Prob(X_1=y_1{+}1,\ldots,X_n=y_n{+}1) 
= \Prob(Y_1=y_1,\allowbreak\ldots,Y_n=y_n\st Y_1+\cdots+Y_n=n-2).\]
\item[\emph{(iii)}]
Define the random variable $\Z=(Z_1,\ldots,Z_n)$, where
$Z_j=\min\{X_j, \lfloor n^{3\eps} \rfloor \}$ for each~$j$.
Then uniformly over $j\ne k$,
\begin{gather*}
   \E Z_j   = 2+O(n^{-1}),~~
   \E Z_j^2 = 5+O(n^{-1}), \\
   \Var Z_j = 1+O(n^{-1}),~~
   \Var Z_j^2 = 27 + O(n^{-1}),~~
   \myCov{Z_j,Z_k} = -n^{-1} + O(n^{-2}).
\end{gather*}
\end{itemize}
\end{lemma}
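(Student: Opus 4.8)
The plan is to recognise all three random vectors through the Prüfer correspondence, after which the moment statements reduce to elementary binomial and Poisson estimates. For part~(i), recall that the Prüfer code is a bijection between the labelled trees on $\{1,\ldots,n\}$ and the sequences in $\{1,\ldots,n\}^{n-2}$, under which the degree of a vertex~$j$ in a tree is one plus the number of occurrences of~$j$ in its code. Hence, if $\X$ is the degree sequence of a uniformly random element of~$\calT$, then $\X-(1,\ldots,1)$ is the vector of occurrence counts in $n-2$ independent uniform draws from $\{1,\ldots,n\}$, which is exactly the claimed multinomial distribution. For part~(ii), I would invoke the standard Poissonisation identity: if $Y_1,\ldots,Y_n$ are i.i.d.\ $\operatorname{Poisson}(1)$ then, dividing the joint mass $e^{-n}/\prod_j y_j!$ by $P(Y_1+\cdots+Y_n=n-2)=e^{-n}n^{n-2}/(n-2)!$, one gets $P(Y_1=y_1,\ldots,Y_n=y_n\st \sum_i Y_i=n-2)=(n-2)!\,n^{-(n-2)}\prod_j(y_j!)^{-1}$ for $\yvec\in N_{n,n-2}$, which coincides with the multinomial mass function from part~(i) and hence with $P(X_1=y_1{+}1,\ldots,X_n=y_n{+}1)$.

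For the single-coordinate moments in part~(iii), note that the one-coordinate marginal of the multinomial in~(i) is $X_j-1\sim\operatorname{Bin}(n-2,1/n)$, whose $r$-th factorial moment $(n-2)(n-3)\cdots(n-r-1)/n^r$ equals $1+O(n^{-1})$ for each fixed~$r$, matching that of $\operatorname{Poisson}(1)$. Writing ordinary moments as fixed linear combinations of factorial moments therefore gives $\E X_j^s=\E(1+Y)^s+O(n^{-1})$ for fixed~$s$ and $Y\sim\operatorname{Poisson}(1)$, whence $\E X_j=2+O(n^{-1})$, $\E X_j^2=5+O(n^{-1})$ and $\E X_j^4=52+O(n^{-1})$. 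The truncation is harmless: by the crude bound $P(X_j-1\ge r)\le\binom{n-2}{r}n^{-r}\le 1/r!$, the event $\{Z_j\neq X_j\}$ has probability smaller than any fixed power of~$n$, and since $0\le Z_j\le X_j\le n$ this changes any bounded moment by at most $n^{O(1)}P(Z_j\neq X_j)=o(n^{-1})$. This yields $\E Z_j=2+O(n^{-1})$, $\E Z_j^2=5+O(n^{-1})$, and consequently $\Var Z_j=1+O(n^{-1})$ and $\Var Z_j^2=\E Z_j^4-(\E Z_j^2)^2=27+O(n^{-1})$.

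For the covariances in part~(iii), with $j\neq k$ and $s,t\in\{1,2\}$ (the case $st=0$ being trivial), I would use the representation $X_j-1=\sum_{i=1}^{n-2}\mathbf{1}_{a_i=j}$, where $(a_1,\ldots,a_{n-2})$ is the i.i.d.\ uniform Prüfer code, expand $X_j^sX_k^t$ into a sum of products of these indicators, and observe that a single trial~$a_i$ cannot equal both~$j$ and~$k$, so only terms indexed by disjoint sets of trials contribute. The expansion of $\E X_j^s\,\E X_k^t$ is the same sum without the disjointness restriction; the extra terms together contribute $O(n^{-1})$, because $s$ and $t$ are bounded and each such ``collision'' configuration forces the number of distinct trials involved to drop by at least one. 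Hence $\Cov(X_j^s,X_k^t)=O(n^{-1})$, and since $X_j^s$ and $X_k^t$ are bounded-degree polynomials in $X_j-1$ and $X_k-1$, reinstating the negligible truncation correction as above gives $\Cov(Z_j^s,Z_k^t)=O(n^{-1})$.

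The only mildly delicate point is this last covariance estimate — tracking which index configurations survive and confirming that forbidding collisions between the two blocks of indices genuinely costs a factor $n^{-1}$ — but it is a routine counting argument; everything else follows directly from the Prüfer bijection together with standard binomial and Poisson estimates.
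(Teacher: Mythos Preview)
Your proof is correct and follows essentially the same route as the paper: both derive the multinomial structure from the tree-counting formula (you via the Pr\"ufer bijection, the paper via the explicit count $\binom{n-2}{x_1-1,\ldots,x_n-1}$ of trees with prescribed degrees), then compute low moments of $X_j$ and absorb the truncation via $P(Z_j\neq X_j)=O(e^{-n^{3\eps}})$. The only cosmetic difference is that the paper reads off the moments by differentiating the probability generating function $p(\xvec)=n^{-n+2}x_1\cdots x_n(x_1+\cdots+x_n)^{n-2}$, whereas you use the binomial marginal and factorial moments directly; both yield the same $2,5,52$ and the same $O(n^{-1})$ covariance bound.
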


\begin{proof}
Statements (i) and (ii) are well-known and follow easily from~\eqref{nr-x}.

For (iii), note that the probability generating function of $\X$
is 
\[ 
 p(\xvec)=\sum_{\hvec}\,\card{\calT_{\hvec}}\,\xvec^{\hvec}=
  n^{-n+2}\, x_1\cdots x_n(x_1+\cdots+x_n)^{n-2}.
\]
This allows computation of small moments of $\X$, for example
\[ 
   \E X_1 = \frac{\partial}{\partial x_1} p(\xvec)|_{(1,\ldots,1)}
   = 2-\dfrac 2n.
\]
The differences between small moments of $\Z$ and the
corresponding moments of $\X$ are within the given error terms.
To see this, we can set all but one of the arguments of $p(\xvec)$ equal to~1
to find the distribution of the degree of one vertex. Thus we find that
$\Prob(X_1=t)\leq 1/(t-1)!$ for $t\geq 1$ and so
\[
\Prob(\Z\ne\X) \leq n\,\Prob(X_1>\lfloor n^{3\eps}\rfloor) = o(e^{-n^{3\eps}}).
\]
Statement (iii) follows.
\end{proof}

The following result from~\cite[Section 3]{GIKM} will be useful.

\begin{lemma}\label{treeedgeprob}
Let $\phi_1,\ldots,\phi_n\in\Reals$ and
let $\hvec$ be a sequence such that $\calT_{\hvec}\ne\emptyset$.
Then
\[
   \frac{1}{\card{\calT_{\hvec}}} \sum_{T\in \calT_{\hvec}}
      \sum_{jk\in E(T)} \phi_j\phi_k
   = \frac{1}{n-2}\,\biggl(\, \biggl(\,
    \sum_{k=1}^n\phi_k\biggr)\biggl(\,\sum_{j=1}^n (h_j{-}1)\phi_j\biggr)
                     - \biggl(\sum_{j=1}^n (h_j{-}1)\phi_j^2\biggr)\biggr)
\]
and
\[
    \frac{1}{\card{\calT_{\hvec}}} \sum_{T\in \calT_{\hvec}} \exp\biggl(\, 
        - \sum_{jk\in E(T)} \phi_j\phi_k\biggr)
    = \exp\biggl( K - \frac{1}{\card{\calT_{\hvec}}} \sum_{T\in \calT_{\hvec}}
         \sum_{jk\in E(T)} \phi_j\phi_k  \biggr)
\]
for some $K$ with
$\abs{K}\leq \dfrac18 n (\max_j\,\abs{\phi_j}-\min_j\,\abs{\phi_j})^4$.
\end{lemma}

Define
\begin{equation}\label{def_eps2}
 \eps_2(\eta) = \min\bigl\{\eps_5(\eta),\tfrac18\eta\bigr\},
 \end{equation}
where $\eps_5(\eta)$ is provided by Theorem~\ref{treeprob}.

\begin{lemma}\label{l:good_trees}
      Let $\eta \in(0,\dfrac12)$ be constant. If assumption \eqref{curlyA} holds with
      $\eps \in(0,\eps_2(\eta)]$ then the expected number of 
      good spanning trees is 
\[
 \sum_{T\in\calT_{\good}} \Prob(\dvec,T) = n^{n-2}\, \lambda^{n-1}
  \exp\left(-\frac{1-\lambda}{2\lambda} - \frac{R}{2\lambda^2 n} + O(n^{-1/2+\eta})\right).
\]
\end{lemma}
\begin{proof}
For a good tree $T$, we can apply Theorem~\ref{treeprob}  to estimate $\Prob(\dvec,T)$.
The function $f(\dvec,\hvec)$ in Theorem~\ref{treeprob}
depends only on $\hvec$, not on the tree $T$ itself.
We can ``average out'' the contribution of $g(\dvec,T)$,
which depends on the structure of~$T$,
using the function $\bar g(\dvec,\hvec)$ defined by
\[
   e^{\bar g(\dvec,\hvec)} = \frac{1}{\card{\calT_{\hvec}}} \,
    \sum_{T\in\calT_{\hvec}}e^{g(\dvec,T)}.
\]
Then we can write
\begin{align*}
 \sum_{T\in\calT_{\hvec}} \Prob(\dvec,T) 
  &= \lambda^{n-1}\, \exp\(f(\dvec,\hvec) + O(n^{-1/2+\eta})\)\, 
     \sum_{T\in\calT_{\hvec}} e^{g(\dvec,T)}\\ 
  &= \lambda^{n-1}\, \card{\calT_{\hvec}}\, \exp\(f(\dvec,\hvec) 
      + \bar g(\dvec,\hvec) + O(n^{-1/2+\eta})\).
\end{align*}
Define
\[ \phi_j = \frac{d_j-d-h_j+\lambda h_j}{n\sqrt{\lambda(1-\lambda)}}\]
for $j=1,\ldots, n$.
Using $\sum_{j=1}^n (d_j-d)=0$, $\norm{\hvec}\leq n^{3\eps}$
and $\Abs{\sum_{j=1}^n h_ja_j}=O(n\max_j \abs{a_j})$ for any $a_1,\ldots,a_n$,
the first part of Lemma~\ref{treeedgeprob} gives
\[
   \frac{1}{\card{\calT_{\hvec}}} \sum_{T\in \calT_{\hvec}}
      \sum_{jk\in E(T)} \phi_j\phi_k = O(n^{-1/2+2\eps}).
\]
Combining this with the second part of Lemma~\ref{treeedgeprob}, we find that
$\bar g(\dvec,\hvec) = O(n^{-1/2+2\eps})$.
The assumption  $\norm{\hvec}\leq n^{3\eps}$ and the fact that
$\mu_1=2-\frac2n$ for a tree imply that
\begin{align*}
    \frac{1-\lambda}{4\lambda}(\mu_1^2+2\mu_1)
    - \frac{(1-\lambda^2)}{6\lambda^2n} \mu_3
    &+ \frac{1}{2\lambda^2n^2}\sum_{j=1}^n (d_j-d)h_j^2 \\
    &{}= \frac{2(1-\lambda)}{\lambda} + O(n^{-1/2+8\eps}+n^{-1+10\eps}).
\end{align*}

Applying Theorem~\ref{treeprob}, we have, since $\eps\leq\frac18\eta$,
\begin{equation}
\label{goodsum}
  \sum_{T\in\calT_\good} \Prob(\dvec,T)
    = \(1+O(n^{-1/2+\eta})\) \lambda^{n-1} \sum_{\hvec\in D_\good}
        \binom{n-2}{h_1{-}1,\ldots,h_n{-}1} \,e^{f^*(\dvec,\hvec)},
\end{equation}
where
\[
f^*(\dvec,\hvec) = \frac{2(1-\lambda)}{\lambda}
 - \frac{1-\lambda}{2\lambda}\mu_2
- \frac{1}{2\lambda^2 n^2}\sum_{j=1}^n (d_j-d)^2 h_j 
  + \frac{1}{\lambda n} \sum_{j=1}^n (d_j-d)h_j.
\]

We now rewrite (\ref{goodsum}) using the random variable $\Z$ defined
in Lemma~\ref{distributions}, by extending the
sum over $D_\good$ to all of $D$, as follows: 
\begin{equation}
    \sum_{T\in\calT_\good} \Prob(\dvec,T)
    = n^{n-2} \lambda^{n-1} e^{O(n^{-1/2+\eta})} \left( \E e^{f^*(\dvec,\Z)}
       - \myE {\One_{D_\bad} e^{f^*(\dvec,\Z)}} \right).
\label{eq1}
\end{equation}

Applying the estimates from Lemma~\ref{distributions}(iii) 
to $f^*(\dvec,\Z)$,   we have
\begin{equation}\label{f*stats}
  \begin{split}
    \E f^*(\dvec,\Z) &= -\frac{1-\lambda}{2\lambda}
           - \frac{R}{\lambda^2 n} + O(n^{-1/2+\eta}), \\
    \Var f^*(\dvec,\Z) &= \frac{R}{\lambda^2 n} + O(n^{-1/2+\eta}).
  \end{split}
\end{equation}
Observe that by Lemma~\ref{distributions}(i), $f^*(\dvec,\Z)$ can be written as a function
of a multinomial distribution:
\[ f^*(\dvec,\Z) = \Tilde{f}(X_1-1,\ldots, X_n-1).\]
We will apply Theorem~\ref{Theorem_subsets}(iii) to the function $\Tilde{f}$.
Recalling (\ref{alphamax-vector}) and (\ref{Deltamax-vector}), we calculate that
$\alphamax=O(n^{-1/2+2\eps})$ and $\Deltamax=0$.
Therefore, using~\eqref{f*stats},
\begin{equation}
\label{eq2}
   \E e^{f^*(\dvec,\Z)} = \exp\biggl(
       -\frac{1-\lambda}{2\lambda}
       - \frac{R}{2\lambda^2 n} + O(n^{-1/2+\eta})
    \biggr).
\end{equation}

Next we bound $\myE {\One_{D_\bad} e^{f^*(\dvec,\Z)}}$.
From the definition of $f$ we have
$f^*(\zvec)\leq \hat f(\zvec)$ for all $\zvec\in\{1,2,\ldots\,\}^n$, where
$\hat f(\zvec) = \frac{2}{\lambda} +\frac{1}{\lambda n} \sum_{j=1}^n(d_j-d)z_j$.
For $\sigma\in S_n$, define 
\[
  \hat f_\sigma(\zvec) = \frac2\lambda
  + \frac{1}{\lambda n} \sum_{j=1}^n(d_{\sigma_j}-d)z_j.
\]
We now apply Lemma~\ref{ranper}
to estimate $\frac1{n!}\sum_{\sigma\in S_n}e^{\hat f_\sigma(\zvec)}$ for
$\zvec\in\{1,2,\ldots\,\}^n$. Defining
\[ u_j=\frac{d_j-d}{\lambda n} \quad \text{ and } \quad v_j=z_j\]
for $j=1,\ldots n$,
we find with respect to a uniformly random
permutation $\sigma\in S_n$ that $\hat f_\sigma(\zvec)$ has
expectation $\frac2\lambda$ and variance at most
$\frac{R}{\lambda^2 n(n-1)}\sum_{j=1}^n z_j^2$.
The parameter $\alpha$ required by
Lemma~\ref{ranper} satisfies $\alpha = O(n^{-1/2+4\eps}/\lambda)$.
Consequently, by Lemma~\ref{ranper}(i),
\begin{equation}
\label{consequently}
 \frac1{n!}\sum_{\sigma\in S_n}e^{\hat f_\sigma(\zvec)}
 =O(e^{2/\lambda})\,  e^{ C \sum_{j=1}^n z_j^2}
\end{equation} 
where $C = \dfrac{R}{2\lambda^2 n(n-1)}$.

Since $D_\good$ is invariant under permutations of the
components, $\E e^{\hat f(\Z)}$ is a symmetric function of
$d_1,\ldots,d_n$.  Therefore,
\begin{align*}
\myE {\One_{D_\bad} e^{f(\Z)}} \leq \myE {\One_{D_\bad} e^{\hat f(\Z)}}
&= \MYE{\One_{D_\bad} \frac1{n!}\sum_{\sigma\in S_n}e^{\hat f_\sigma(\Z)}}\\
&= O(e^{2/\lambda}) \MyE{\One_{D_\bad} e^{C\sum_{j=1}^n Z_j^2}}.
\end{align*}
Next, note that $Y_1 + \cdots + Y_n$ has a Poisson
distribution with mean $n$, and hence by Stirling's approximation, 
\[ \Prob(Y_1+\cdots+Y_n=n-2)= \frac{e^{-n}\, n^{n-2}}{(n-2)!} = \Theta(n^{-1/2}).
\]
Applying Lemma~\ref{distributions}(ii), we obtain
\begin{equation}
\label{overcount} \Prob(X_1 = y_1+1,\ldots, X_n = y_n+1) 
= O(n^{1/2})\, \Prob(Y_1=y_1,\ldots,Y_n=y_n).
\end{equation}
Therefore,
\[
  \myE {\One_{D_\bad} e^{f(\Z)}} 
  = O(e^{2/\lambda}n^{1/2})
 \sum_{y_1,\ldots,y_n} \Prob\(\Y=(y_1,\ldots,y_n)\)\, 
      e^{C\sum_{j=1}^n \min\{y_j+1,\lfloor n^{3\eps}\rfloor\}^2}, 
\]
where the sum is restricted to sequences $(y_1,\ldots, y_n)$ of
nonnegative integers such that
$(y_1+1,\ldots, y_n+1)\not\in D_{\text{good}}$.
Recalling that the components of $\Y$ are independent, we can separate the
sum and use the union bound on the constraint.
This gives 
\[ \myE{ \One_{D_\bad} e^{f(\Z)}} \leq O(e^{2/\lambda}n^{3/2})
  (\varSigma_1+\varSigma_2)^{n-1} \varSigma_2,\]
 where
\[
     \varSigma_1 = \sum_{y=0}^{\lfloor n^{3\eps}\rfloor} \frac{e^{-1}}{y!}\, e^{C(y+1)^2}
     \quad\text{and}\quad
     \varSigma_2 = \sum_{y=\lfloor n^{3\eps}\rfloor+1}^\infty \frac{e^{-1}}{y!}\, e^{Cn^{6\eps}}.
\]
Since $C=O(n^{-1+2\eps}/\lambda^2)$ and $\sum_{j=0}^\infty \frac1{j!}(j+1)^2=5e$,
we conclude that
\begin{align}
  \varSigma_1 &= \sum_{y=0}^{\lfloor n^{3\eps}\rfloor -1} \frac{e^{-1}}{y!}\,
     \(1 + C(y{+}1)^2 + O(n^{-2+17\eps})\)
    = 1 +  5C + O(n^{-2+17\eps})  \label{Sigma1}, \\
  \varSigma_2 &= O(e^{-n^{3\eps}}).\notag
\end{align}
Therefore
 \begin{equation}
\label{eq3}
\myE{ \One_{D_\bad} e^{f(\Z)}} = O(n^{3/2}) 
   \exp\( 2/\lambda -n^{3\eps}  + O(n^{2\eps}/\lambda^2) \) = 
                    O\(e^{- n^{3\eps}/2}\).
\end{equation}
Combining (\ref{eq1}), (\ref{eq2}) and (\ref{eq3}) completes the proof.
\end{proof}

\subsection{The expected number of bad spanning trees}\label{s:bad-trees}

To complete the proof of Theorem~\ref{treecount},
it remains for us to bound $\sum_{T\in\calT_\bad} \Prob(\dvec,T)$.  Note that we
cannot use Theorem~\ref{treeprob} directly since $T$ fails the
required degree bound.  
However, we can choose a
subgraph $F\subseteq T$ to which Theorem~\ref{treeprob} applies and use the fact that $\Prob(\dvec,T)\leq \Prob(\dvec,F)$.

\begin{lemma}\label{l:bad_trees}
      Let $\eta \in(0,\frac12)$ be constant. If assumption \eqref{curlyA} holds with
      $\eps \in(0,\eps_2(\eta)]$, where $\eps_2(\eta)$ is defined in \eqref{def_eps2},
       then the expected number of bad spanning trees is 
 \[
 \sum_{T\in\calT_{\bad}} \Prob(\dvec,T) = n^{n-2}\, \lambda^{n-1}\,
   O\(e^{-n^{3\eps}/2}\).
\]
\end{lemma}
\begin{proof}
Let $T$ be a bad tree.
Define $F(T)$ to be the set of all subgraphs of $T$ that have maximum
degree at most $n^{3\eps}$ and at least
$n-1-\sum_{j=1}^n \max\{0,x_j-n^{3\eps}\}$ edges.
Since one such subgraph is obtained by deleting $\max\{0,x_j-\lfloor n^{3\eps}\rfloor \}$
arbitrary edges incident with each vertex~$j$, we have $F(T)\ne\emptyset$.
We also have that for any permutation $\sigma$ of the vertices,
$F(\sigma(T))=\sigma(F(T))$.
Since $g(\dvec,F')=O(n^{2\eps}/\lambda(1-\lambda))$
for all $F'\in F(T)$, using Theorem~\ref{treeprob} we can write
\[
   \sum_{T\in\calT_\bad} \Prob(\dvec,T) \leq
    e^{\frac{1}{\lambda(1-\lambda)} O(n^{2\eps})}\,
      \sum_{T\in\calT_\bad} 
    \lambda^{n-1-\sum_{j=1}^n \max\{0,x_j-n^{3\eps}\}}
    \frac{1}{\card{F(T)}}\sum_{F'\in F(T)} e^{\hat f(\zvec(F'))},
\]
where $\zvec(F')$ is the degree sequence of $F'$ and
$\hat f$ is defined as before.
The expression on the right is a symmetric function of $\dvec$,
so we can average it over all permutations of the elements
of~$\dvec$.  The same calculations that led to (\ref{consequently})
show that
\[
   \frac1{n!} \sum_{\sigma\in S_n} 
    e^{\hat f_\sigma(\zvec(F'))} = 
  O(e^{2/\lambda}) e^{C\sum_{j=1}^n \min\{x_j,n^{3\eps}\}^2}.
\]
Therefore
\begin{align*}
   & \sum_{T\in\calT_\bad} \Prob(\dvec,T) \\
 &\leq
    e^{\frac{1}{\lambda(1-\lambda)} O(n^{2\eps})}\,
 \lambda^{n-1}\, \sum_{T\in\calT_\bad} \lambda^{-\sum_{j=1}^n \max\{0,x_j-n^{3\eps}\}}
   e^{C\sum_{j=1}^n \min\{x_j,n^{3\eps}\}^2}\\
 &\leq 
    e^{\frac{1}{\lambda(1-\lambda)} O(n^{2\eps})}\,
 \,\lambda^{n-1} n^{n-2} \\
 & \hspace*{2cm} 
  \times\sum_{y_1,\ldots,y_n} \Prob\(\Y=(y_1,\ldots,y_n)\) \,
 \lambda^{-\sum_{j=1}^n \max\{0,y_j+1-n^{3\eps}\}}\,
 e^{C\sum_{j=1}^n \min\{y_j+1,n^{3\eps}\}^2},
\end{align*}
using (\ref{overcount}).
As before, the sum is restricted to those sequences $(y_1,\ldots, y_n)$ of 
nonnegative integers such that
$(y_1+1,\ldots, y_n+1)\not\in D_{\text{good}}$.

Separating the sum and applying the union bound, we have
\[ 
  \sum_{y_1,\ldots,y_n} \Prob\(\Y=(y_1,\ldots,y_n)\) \,
 \lambda^{-\sum_{j=1}^n \max\{0,y_j+1-n^{3\eps}\}}\,
 e^{C\sum_{j=1}^n \min\{y_j+1,n^{3\eps}\}^2}
\leq 
n\, (\varSigma_1+\varSigma'_2)^{n-1} \varSigma'_2, 
\]
where  $\varSigma_1$ is defined earlier and
\begin{align*}
  \varSigma'_2 &= \sum_{y=\lfloor n^{3\eps}\rfloor}^\infty \frac{e^{-1}\lambda^{-(y-n^{3\eps})}}{y!}\, e^{Cn^{6\eps}}
    = O(e^{-n^{3\eps}}).
\end{align*}
Therefore, using (\ref{Sigma1}), 
\begin{align*}
   \sum_{T\in\calT_\bad} \Prob(\dvec,T) 
 &\leq
    \lambda^{n-1}\, n^{n-2}\,
    \exp\Bigl( - n^{3\eps} + \lambda^{-2} n^{2\eps}  + \dfrac{1}{\lambda(1-\lambda)} O(n^{2\eps})\Bigr)
   \notag \\[-1ex]
   &= \lambda^{n-1}\, n^{n-2}\, O(e^{-n^{3\eps}/2}). \qedhere
\end{align*}
\end{proof}

\section{Counting induced subgraphs}\label{s:induced}

Recall the parameters defined in (\ref{R-lambda-delta}). 
In this section, our starting point is the following result adapted from  McKay~\cite[Theorem~2.4]{ranx}
in the same way as Theorem \ref{treeprob} was adapted from  \cite[Theorem~2.1]{ranx}.
We require notation that generalizes \eqref{omega}:
 \[
 \omega_{s,t} = \sum_{j=1}^r \,(d_{j}-d)^s (h_j - \lambda(r-1))^t
  \qquad 
    \text{for $s,t \geq 0$}.
\]

\begin{thm}\label{induced-prob}
Let $\eta\in(0,\frac12)$ be constant. Then there is
a constant $\eps_6(\eta) > 0$ such that the following
holds for every fixed $\eps \in (0,\eps_6(\eta)]$.
Let $\dvec$ be a degree sequence which satisfies~\eqref{curlyA}.
Suppose that $H^{[r]}$ is a graph on the vertex set $\{ 1,\ldots, r\}$ with degree
sequence $\hvec^{[r]} = (h_1,\ldots, h_r)$ such that
$r \leq n^{1/2 + \eps}$.
Then the probability that $G\sim \Gd$ has $H^{[r]}$ as an induced subgraph is
\begin{align*}
     \lambda^m \, &(1-\lambda)^{\binom{r}{2}-m}\,\\
 & \times \exp\biggl(\frac{2\omega_{1,1} - \omega_{0,2}}{2\lambda(1-\lambda) n} + \frac{r^2}{2n}
   + \frac{(1-2\lambda)\omega_{0,1}}{2\lambda(1-\lambda) n} +
    \frac{4\omega_{1,0}\omega_{0,1} - \omega_{0,1}^2 - 2\omega_{1,0}^2}{4\lambda(1-\lambda) n^2} \\
   & \hspace*{2cm} {} + \frac{r(2\omega_{1,1} - \omega_{2,0} - \omega_{0,2})}{2\lambda(1-\lambda) n^2}
   - \frac{(1-2\lambda)(\omega_{0,3} + 3\omega_{2,1} - 3\omega_{1,2})}{6\lambda^2(1-\lambda)^2 n^2}
   + O(n^{-1/2+\eta}) \biggr).
\end{align*}
\end{thm}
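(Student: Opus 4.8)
The statement is McKay's asymptotic enumeration result, and the plan is to prove it by the complex-analytic (saddle-point) method, which is the standard tool in the dense regime covered by \eqref{curlyA}. Write the required probability as a ratio of two counts, $P(\dvec,H^{[r]}) = N'/N$, where $N = N(\dvec)$ is the number of simple graphs on $\{1,\ldots,n\}$ with degree sequence $\dvec$, and $N' = N'(\dvec,H^{[r]})$ is the number of such graphs whose induced subgraph on $\{1,\ldots,r\}$ is exactly $H^{[r]}$. Both are coefficient extractions in the generating function $\prod_{j<k}(1+x_jx_k)$: the quantity $N$ is the coefficient of $\prod_j x_j^{d_j}$, while for $N'$ one replaces each factor over a pair inside $\{1,\ldots,r\}$ by $x_jx_k$ if $jk\in E(H^{[r]})$ and by $1$ otherwise, leaving all factors over pairs meeting $\{r+1,\ldots,n\}$ intact.

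By Cauchy's formula each coefficient becomes an integral over the torus $x_j = \rho_j e^{i\theta_j}$. First I would fix the radii by the saddle-point equations, so that the ``expected degree'' of the real integrand at vertex $j$ equals $d_j$; to leading order this forces $\rho_j^2/(1+\rho_j^2)\approx\lambda$, with vertex-dependent corrections governed by $d_j-d$. After this calibration the linear term in $\theta$ vanishes and the phase is stationary at the origin. The core estimate is then to show that each integrand concentrates in a small box $\|\theta\| = O(n^{-1/2+\eps'})$ about the origin, the complementary region contributing a relatively exponentially small amount.

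This ``boxing'' step --- controlling a highly oscillatory integrand over the full $n$-dimensional torus --- is where I expect the main difficulty to lie, because the bound must hold uniformly across the whole range \eqref{curlyA}, including the sparse end $\min\{d,n-1-d\} \approx n/\log n$, which is precisely where the tail analysis is delicate and where the constraint $a+b<\tfrac12$ enters. Once the tails are discarded, everything reduces to a local expansion, but getting the box argument to cover the entire degree range is the real obstacle.

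Inside the box I would expand the logarithm of each integrand to third order in $\theta$ (the fourth-order remainder being absorbed into the $O(n^{-b})$ term) and carry out the resulting Gaussian integral, whose quadratic form is $\lambda(1-\lambda)$ times the matrix $(n{-}2)I + J$, so that its determinant is explicit. On taking the ratio $N'/N$, the Gaussian normalizations and the bulk of the degree-dependent factors cancel; the substitution of $x_jx_k$ (respectively $1$) for the $\binom{r}{2}$ pairs inside $\{1,\ldots,r\}$, evaluated at the saddle, produces the prefactor $\lambda^m(1-\lambda)^{\binom{r}{2}-m}$. The remaining exponential terms emerge as the \emph{difference} between the quadratic and cubic Taylor coefficients of the two integrands: the $\binom{r}{2}$ pair-factors that are missing their quadratic part in $N'$ give a determinant-ratio contribution yielding the $r^2/(2n)$ and related terms, while the cubic differences, reorganised in terms of the statistics $\omega_{s,t}$, $r$ and $R$, supply the rest. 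Verifying that this reorganisation produces exactly the displayed combination, and that every discarded term is genuinely $O(n^{-b})$, is the bookkeeping that completes the proof.
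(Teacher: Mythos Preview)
The paper does not prove Theorem~\ref{induced-prob} at all: it is quoted verbatim as ``the following result of McKay~\cite[Theorem~2.4]{ranx}'' and serves as a black-box input to the calculations in Section~\ref{s:induced}. So there is nothing in the present paper to compare your argument against.

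That said, your sketch is a fair outline of how McKay actually establishes the result in~\cite{ranx}: a ratio of two coefficient extractions, Cauchy's formula on the $n$-torus, saddle-point radii chosen so the linear phase vanishes, a boxing argument to localise near $\theta=0$, and a Gaussian integral with cubic corrections whose difference between numerator and denominator produces the $\omega_{s,t}$ terms. Your identification of the boxing step as the place where the constraint $a+b<\tfrac12$ bites is also correct. If you were asked to reprove the theorem from scratch this would be the right plan, but for the purposes of the present paper no proof is required: just cite~\cite{ranx} and move on.
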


Now, for a given permutation $\sigma\in S_n$, let
\[ \omega_{s,t}(\sigma) = \sum_{j=1}^r \,(d_{\sigma_j}-d)^s\, (h_j - \lambda(r-1))^t.
\]
Note that $\omega_{0,t}(\sigma)$ is independent of $\sigma$ and equals $\omega_{t}$ from (\ref{omega}).
Let $f_{\text{ind}}:S_n\rightarrow \Reals $ be defined as
\begin{align*} f_{\text{ind}}(\sigma) = 
 &\frac{2\omega_{1,1}(\sigma) - \omega_{2}}{2\lambda(1-\lambda) n} + \frac{r^2}{2n}
   + \frac{(1-2\lambda)\omega_{1}}{2\lambda(1-\lambda) n} +
    \frac{4\omega_{1,0}(\sigma)\omega_{1} - \omega_{1}^2 - 2\omega_{1,0}(\sigma)^2}{4\lambda(1-\lambda) n^2} \\
   & \quad  {} + \frac{r(2\omega_{1,1}(\sigma) - \omega_{2,0}(\sigma) - \omega_{2})}{2\lambda(1-\lambda) n^2}
   - \frac{(1-2\lambda)(\omega_{3} + 3\omega_{2,1}(\sigma) - 3\omega_{1,2}(\sigma))}{6\lambda^2(1-\lambda)^2 n^2}.
\end{align*}
Observe that the considerations of Remark~\ref{uniformremark} apply to
Theorem~\ref{induced-prob}.
Thus, we find (under the assumptions of Theorem \ref{induced-prob}) that the expected number of induced copies of
$H^{[r]}$ in a
uniformly random graph with degree sequence $\dvec$ is
\begin{equation}\label{ind_start-point}
\(1+O(n^{-1/2+\eta})\) \frac{r!}{|\Aut(H^{[r]})|} \binom{n}{r} \lambda^m\, (1-\lambda)^{\binom{r}{2}-m}\,    \mYE{e^{f_{\text{ind}}(\X)}} ,
\end{equation}
where  the expectation is taken with respect to a uniformly random element $\X$ of
$S_n$.

\bigskip

In the proof of Theorem \ref{inducedsubcount} we will use the following bounds
given by the power mean inequality:
\begin{equation}\label{bounds_PM}
 \begin{aligned}
 \frac{\delta}{\lambda(1-\lambda) n} \sum_{j=1}^r |h_j - \lambda(r-1)| \leq 
    r^{2/3} \biggl(\frac{\delta^3}{\lambda^3(1-\lambda)^3 n^3}\sum_{j=1}^r |h_j - \lambda(r-1)|^3 \biggr)^{\!\!1/3},\\
    \frac{\delta^2}{\lambda^2(1-\lambda)^2 n^2} \sum_{j=1}^r |h_j - \lambda(r-1)|^2\leq 
    r^{1/3} \biggl(\frac{\delta^3}{\lambda^3(1-\lambda)^3 n^3}\sum_{j=1}^r |h_j - \lambda(r-1)|^3 \biggr)^{\!\!2/3}.
   \end{aligned}
\end{equation}


Before proving Theorem~\ref{inducedsubcount}, we apply the results of Section~\ref{s:moments} to obtain the following expressions. Define 
\begin{equation}\label{def_eps3}
\eps_3(\eta)=\min\bigl\{\eps_6(\eta),\tfrac18\eta\bigr\}.
\end{equation}

\begin{lemma}\label{inducedallvar}
Let $\eta\in(0,\frac12)$ be constant. 
If assumptions \eqref{curlyA} and \eqref{induced-assumption}
hold with  $\eps \in (0,\eps_3(\eta)]$, then
\begin{align*}
     \E f_{\rm ind}(\X) 
&= -\frac{\omega_{2}}{2\lambda(1-\lambda) n} + \frac{r^2}{2n} + \frac{(1-2\lambda) \omega_{1}}{2\lambda(1-\lambda)
n}
  - \frac{\omega_{1}^2}{4\lambda(1-\lambda) n^2} - \frac{r^2 R}{2\lambda(1-\lambda)n^2}\\
& \qquad \qquad {} - \frac{r\, \omega_{2} }{2\lambda(1-\lambda) n^2} - \frac{(1-2\lambda) \omega_{3}}{6\lambda^2
(1-\lambda)^2 n^2} - \frac{(1-2\lambda) R \, \omega_{1}}{2\lambda^2(1-\lambda)^2 n^2} + O(n^{-1/2+\eta}),\\
\Var f_{\rm ind} (\X) &= \frac{R\, \omega_{2}}{\lambda^2(1-\lambda)^2 n^2} -\frac{r\, \omega_{1} \sum_{j=1}^n
(d_j-d)^3}{\lambda^2(1-\lambda)^2 n^4} + O(n^{-1/2+\eta}).
\end{align*}
\end{lemma}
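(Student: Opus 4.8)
The plan is to write $f_{\rm ind}(\sigma)$ as a $\sigma$-independent constant plus a linear combination of the statistics $\omega_{1,1}(\sigma)$, $\omega_{1,0}(\sigma)$, $\omega_{1,0}(\sigma)^2$, $\omega_{2,0}(\sigma)$, $\omega_{2,1}(\sigma)$, $\omega_{1,2}(\sigma)$. Apart from the square, each of these has the form $\Psi_{u,v}$ of Lemma~\ref{ranper}, with $u(j)=(h_j-\lambda(r-1))^t\One_{j\le r}$ for $t\in\{0,1,2\}$ and $v(j)=(d_j-d)$ or $(d_j-d)^2$. One reads off that $\omega_{1,1}(\sigma)$ appears with coefficient $\frac{1}{\lambda(1-\lambda)n}\bigl(1+O(r/n)\bigr)$ and each of the other five with a coefficient of order $n^{-2}$.

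For $\E f_{\rm ind}(\X)$ I would apply Lemma~\ref{ranper}(i). The key point is that a statistic paired with $v(j)=d_j-d$ has expectation zero, because $\bar v=\frac1n\sum_j(d_j-d)=0$; this kills the contributions of $\omega_{1,1}$, $\omega_{1,0}$ and $\omega_{1,2}$. The remaining linear statistics give the exact values $\E\,\omega_{2,0}(\X)=rR$ and $\E\,\omega_{2,1}(\X)=R\omega_1$, while $\E\,\omega_{1,0}(\X)^2=\Var\omega_{1,0}(\X)$, evaluated by Lemma~\ref{ranper}(ii), contributes only $O(rR/n^2)=O(n^{-b})$ after multiplication by its $n^{-2}$ coefficient. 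Collecting the surviving terms reproduces the stated expression for $\E f_{\rm ind}(\X)$ exactly, the $\omega_{1,0}^2$ term being the sole source of error; it, and all further residual terms, are shown to be $O(n^{-b})$ using $R=O(\delta^2)$, $r=O(n^{1/2+\eps})$ and the power-mean bounds~\eqref{bounds_PM} with~\eqref{induced-assumption}, together with the elementary facts that $\delta\ge\frac12$ whenever $\delta>0$ and that every $R$-term vanishes when $\delta=0$.

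For $\Var f_{\rm ind}(\X)$ I would expand into the weighted sum of the six self-variances and of their pairwise covariances. The dominant contribution is $\frac{1}{\lambda^2(1-\lambda)^2 n^2}\Var\omega_{1,1}(\X)$; Lemma~\ref{ranper}(ii) gives $\Var\omega_{1,1}(\X)=\frac{1}{n-1}\bigl(\omega_2-\omega_1^2/n\bigr)(nR)$, which produces the leading term $\frac{R\,\omega_2}{\lambda^2(1-\lambda)^2 n^2}$ up to $O(n^{-b})$. The second stated term comes solely from twice the covariance of $\omega_{1,1}(\X)$ with the summand $-r\,\omega_{2,0}(\X)/(2\lambda(1-\lambda)n^2)$: by Lemma~\ref{ranper}(ii), $\Cov\bigl(\omega_{1,1}(\X),\omega_{2,0}(\X)\bigr)=\frac{\omega_1(n-r)}{n(n-1)}\sum_j(d_j-d)^3$, and weighting this yields $-\frac{r\,\omega_1\sum_j(d_j-d)^3}{\lambda^2(1-\lambda)^2 n^4}$ up to error. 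Every remaining contribution — the covariances $\Cov(\omega_{1,1},\omega_{1,0})$, $\Cov(\omega_{1,1},\omega_{2,1})$, $\Cov(\omega_{1,1},\omega_{1,2})$, $\Cov(\omega_{1,1},\omega_{1,0}^2)$ and all variances and covariances among the five $n^{-2}$-statistics — is absorbed into $O(n^{-b})$. For the quantities involving the non-linear statistic I would write $\omega_{1,0}(\sigma)^2=\omega_{2,0}(\sigma)+\sum_{j\ne k\le r}(d_{\sigma_j}-d)(d_{\sigma_k}-d)$ and invoke Lemma~\ref{ranper}(iv),(v); crucially, here $u\equiv 0$ and $\bar v=0$, so the explicit main term of Lemma~\ref{ranper}(v) vanishes and only its $O\bigl((\|u\|+\|v\|)^2\|u'\|\|v'\|/n\bigr)$ error remains.

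The main obstacle is the bookkeeping rather than any single estimate: one must verify that exactly the $n^{-1}$- and $n^{-2}$-scale terms listed above survive and that each of the many residual products, self-covariances and correction factors is $O(n^{-b})$ under~\eqref{curlyA}, $r=O(n^{1/2+\eps})$ and~\eqref{induced-assumption}. The subtlest step is controlling the non-linear statistic $\omega_{1,0}(\sigma)^2$, whose fourth-moment behaviour forces the use of Lemma~\ref{ranper}(iv),(v) and of the cancellations noted above in place of crude bounds.
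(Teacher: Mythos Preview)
Your proposal is correct and follows essentially the same route as the paper: extend each $\omega_{s,t}(\sigma)$ to an $n$-term sum so that Lemma~\ref{ranper} applies directly, use $\bar v=0$ for $v(j)=d_j-d$ to kill the expectations of the odd-$s$ statistics, isolate $\Var\omega_{1,1}$ as the leading variance contribution, extract the second surviving term from $2\Cov\bigl(\omega_{1,1},-r\omega_{2,0}/(2\lambda(1-\lambda)n^2)\bigr)$, and handle the quadratic statistic $\omega_{1,0}^2$ via the decomposition into $\omega_{2,0}$ plus off-diagonal $E_{jk}$ terms treated with Lemma~\ref{ranper}(iv),(v) using $u\equiv 0$, $\bar v=0$. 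The only cosmetic difference is that the paper keeps the two occurrences of $\omega_{1,1}$ (with coefficients of order $n^{-1}$ and $rn^{-2}$) separate throughout, whereas you fold them into a single coefficient $\frac{1}{\lambda(1-\lambda)n}\bigl(1+O(r/n)\bigr)$; both bookkeeping choices lead to the same estimates.
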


\begin{proof}
 We will often employ the bounds $R \leq\delta^2$ and $\abs{h_j - \lambda(r-1)} \leq r$. 
 
In order to easily apply Lemma~\ref{ranper}, we extend the sum
defining $\omega_{s,t}(\sigma)$ to $n$ terms by appending zeros:
\begin{equation}
\label{omegakl}
\omega_{s,t}(\sigma) = \sum_{j=1}^n u_j v_{\sigma_j}
\end{equation}
where for $j=1,\ldots n$,
\[
 u_j = \begin{cases} \;(h_j - \lambda(r-1))^t, & \text{if $j\leq r$},\\
          \;0, & \text{if $j\geq r+1$}, \end{cases} \qquad\text{and}\qquad
 v_j = (d_j - d)^s.
\]

When applying Lemma~\ref{ranper}, it will be convenient to use the identity
\[
   \sum_{j=1}^n (u(j)-\bar{u}) (u'(j)-\bar{u}') =  \sum_{j=1}^n u(j) u'(j) - n \bar{u} \bar{u}'.    
\]
If $u(j) = q_j^k$ and  $u'(j) = q_j^t$ for $j=1,\ldots, n$, for some sequence 
$(q_1,\ldots,q_n)\in \Reals^n$ and  $k,t \geq 0$,
then we can apply the power mean inequality to bound this expression by $ O(1) \sum_{j=1}^n |q_j|^{k+t}$.

By Lemma~\ref{ranper}(i),
\[
 \myE {\omega_{s,t}(\X)} = \frac{1}{n} \biggl(\,\sum_{j=1}^n (d_j-d)^s\biggr)
                  \biggl(\,\sum_{j=1}^r (h_j-\lambda(r-1))^{t}\biggr).
\]
This implies that $\myE{\omega_{1,t}(\X)} = 0$ for any $t\geq 0$, and that
\[ 
  \MYE{-\frac{r\,\omega_{2,0}(\X) }{2\lambda(1-\lambda) n^2}}
  = -\frac{r^2R}{2\lambda(1-\lambda) n^2},\qquad
 \MYE{-\frac{(1-2\lambda)\omega_{2,1}(\X)}{2\lambda^2(1-\lambda)^2 n^2}}
 = -\frac{(1-2\lambda) R\, \omega_{1}}{2\lambda^2 (1-\lambda)^2 n^2}.
\]
Finally, applying Lemma~\ref{ranper}(ii) shows that
\begin{align*} 
\myE{\omega_{1,0}(\X)^2} = \myVar{\omega_{1,0}(\X)}
   &= \frac{1}{n-1} \sum_{j=1}^n (d_j-d)^2 
    \left( r\left(1-\frac{r}{n}\right)^2 + (n-r)\left(\frac{r}{n}\right)^2\right)\\
 &= O(R r),
\end{align*}
again using the fact that $\myE{\omega_{1,0}(\X)}=0$ for the first equality.
Therefore 
\[ \MYE{ - \frac{\omega_{1,0}(\X)^2}{2\lambda(1-\lambda) n^2}}
  =  O\left(\frac{Rr}{\lambda (1-\lambda) n^2}\right) = O(n^{-1/2 + 4\eps}) = O(n^{-1/2+\eta}).
\]
Combining the above expressions and estimates leads to the
expression for
$\E f_{\rm ind}(\X)$. 

Now for the variance. From Lemma~\ref{ranper}(ii), we have
\begin{equation}\label{variance_11}
 \myVar{\omega_{1,1}(\X)}  = \frac{nR}{n-1}\, \left(\omega_{2} - \frac{\omega_{1}^2}{n}\right)
  = R\, \omega_{2} + O\left(\frac{R (\omega_{2} + \omega_{1}^2)}{n}\right).
\end{equation}
Combining \eqref{induced-assumption} and \eqref{bounds_PM} gives
\begin{align*}
\frac{ R\, \omega_{1}^2}{\lambda^2(1-\lambda)^2 n^3}
  &= O\left(\frac{r^{4/3} n^{-1/3+2\eta/3}}{n}\right) =O(n^{-1/2+\eta}),\\
\frac{ R\, \omega_{2}}{\lambda^2(1-\lambda)^2 n^3}
 &= O\left(\frac{r^{1/3} n^{-1/3+2\eta/3}}{n}\right)
= O(n^{-1/2+\eta}).
\end{align*}
Therefore, the first term of $f_{\rm ind}$ has variance
\[ 
  \MYVar{\frac{\omega_{1,1}(\X)}{\lambda(1-\lambda) n}}
  = \frac{R\omega_{2}}{\lambda^2(1-\lambda)^2 n^2} + O(n^{-1/2+\eta}).
\]
Also
\[ 
  \MYVar{\frac{r\, \omega_{1,1}(\X)}{\lambda(1-\lambda) n^2}}
  = O\left(\frac{r^2 R\, \omega_{2}}{\lambda^2(1-\lambda)^2\, n^4}\right) = 
   O\left(\frac{\delta^2 r^5}{\lambda^2(1-\lambda)^2\, n^4}\right)
  = O(n^{-1/2+\eta}).
\]
Using Lemma~\ref{ranper}(ii) we have the following rough bound:
\begin{equation}\label{rough} 
\myVar{\omega_{k,t}(\X)} = O(\delta^{2k}\, r^{2t + 1}).
\end{equation}
Hence
\begin{align*}
\MYVar{\frac{\omega_{1,0}(\X)\,\omega_{1}}{4\lambda(1-\lambda) n^2}}
  &= 
   O\left(\frac{r^5\, \delta^2}{\lambda^2 (1-\lambda)^2 n^4}\right) = O(n^{-1/2+\eta}),\\
\MYVar{\frac{r\, \omega_{2,0}(\X)}{\lambda(1-\lambda) n^2}}
  &= O\left(\frac{r^3 \delta^4}{\lambda^2 (1-\lambda)^2 n^4}\right) = O(n^{-1/2+\eta}),\\
\MYVar{\frac{\omega_{2,1}(\X)}{\lambda^2(1-\lambda)^2 n^2}}
  &= O\left(\frac{\delta^4 r^3}{\lambda^4(1-\lambda)^4 n^4}\right) = O(n^{-1/2+\eta}),\\
\MYVar{\frac{\omega_{1,2}(\X)}{\lambda^2(1-\lambda)^2 n^2}}
 &= O\left(\frac{\delta^2 r^5}{\lambda^4 (1-\lambda)^4 n^4}\right) = O(n^{-1/2+\eta}).
\end{align*}

The final variance that we must calculate is $\myVar{\omega_{1,0}(\X)^2}$.
We have
\begin{equation}\label{varw102}
  \myVar{\omega_{1,0}(\X)^2} \leq \myE {\omega_{1,0}(\X)^4}
  = \frac{1}{n!}\sum_{\sigma\in S_n} \Bigl(\sum_{j=1}^r (d_{\sigma_j}-d)\Bigr)^4
    - \frac{(r)_4}{(n)_4}\Bigl(\sum_{k=1}^n (d_k-d)\Bigr)^4.
\end{equation}
Note that the second term is  $0$ 
since $\sum_{k=1}^n (d_k-d)=0$.
Expanding  the right hand side of  \eqref{varw102} as
\[
    \MYE{\,\sum_{i_1,i_2,i_3,i_4=1}^n\!\!
       c(i_1,i_2,i_3,i_4) (d_{i_1}-d)(d_{i_2}-d)(d_{i_3}-d)(d_{i_4}-d)},
\]
we find that $c(i_1,i_2,i_3,i_4)=0$ if $i_1,i_2,i_3,i_4$ are distinct.
The  part $ \Bigl(\sum_{j=1}^r (d_{\sigma_j}-d)\Bigr)^4$ has $r^4-(r)_4=O(r^3)$ other terms
while the part $\frac{(r)_4}{(n)_4}\Bigl(\sum_{k=1}^n (d_k-d)\Bigr)^4$
 has $n^4-(n)_4=O(n^3)$ other terms.
Therefore, $\myVar {\omega_{1,0}(\X)^2}=O(r^3\delta^4+r^4\delta^4/n)
=O(r^3\delta^4)$, which proves that
\[
    \MYVar{\frac{\omega_{1,0}(\X)^2}{2\lambda(1-\lambda) n^2}}
  = O(n^{-1/2+\eta}).
\]
Hence we see that only the first term of $f_{\rm ind}(\X)$ has non-negligible
variance.  It follows that any covariance which does not involve the first term
will automatically fit within the $O(n^{-1/2+\eta})$ error term.
We now compute the remaining
covariances. Lemma~\ref{ranper}(ii) implies that
\[ 
 \mYCov{\omega_{j,k}(\X),\omega_{s,t}(\X)} = O(\delta^{j+s}\, r^{k+t+1}),
\]
which shows that
\[
\MYCov{\frac{\omega_{1,1}(\X)}{\lambda(1-\lambda)n}, 
  \frac{\omega_{1,0}(\X)}{\lambda(1-\lambda)n^3}}
    = O\left(\frac{\delta^2 r^2}{\lambda^2(1-\lambda)^2 n^3}\right) = O(n^{-1/2+\eta}).
\]
Using assumption  \eqref{induced-assumption} and bounds \eqref{bounds_PM}, \eqref{variance_11}, we find that
\begin{align*}
\MYCov{\frac{\omega_{1,1}(\X)}{\lambda(1-\lambda)n},
  \frac{r\, \omega_{1,1}(\X)}{\lambda(1-\lambda)n^2}}
 &= \frac{r}{\lambda^2 (1-\lambda)^2 n^3}\, \myVar{\omega_{1,1}(\X)} \\
  &= O\left(\frac{\delta^2 r\omega_{2}}{\lambda^2(1-\lambda)^2 n^3}\right)
  = O \left( \frac{r^{4/3} n^{-1/3+2\eta/3}}{n} \right) = O(n^{-1/2+\eta}).
\end{align*}
Applying Lemma~\ref{ranper}(ii) and using the same kind of argument, we find that
\begin{align*} 
  \MYCov{\frac{\omega_{1,1}(\X)}{\lambda(1-\lambda)n},
  \frac{\omega_{2,1}(\X)}{\lambda^2(1-\lambda)^2 n^2}}
  & = O\left(\frac{\delta^3 \omega_{2}}{\lambda^3(1-\lambda)^3 n^3}\right) \\
  &{\kern3em}= O\left(\frac{\delta r^{1/3} n^{-1/3+2\eta/3}}{\lambda(1-\lambda) n }\right) = O(n^{-1/2+\eta}).
\\
  \MYCov{\frac{\omega_{1,1}(\X)}{\lambda(1-\lambda)n},
  \frac{\omega_{1,2}(\X)}{\lambda^2(1-\lambda)^2 n^2}}
  &= O\left(\frac{\delta^2 \sum_{j=1}^r |h_j -\lambda(r-1)|^3}{\lambda^3(1-\lambda)^3 n^3}\right)
= O(n^{-1/2+\eta}).
\end{align*}

The following term will contribute to the answer, so we calculate it precisely.
Writing $\omega_{1,1}(\sigma))=\sum_{j=1}^n u_jv_{\sigma_j}$ using (\ref{omegakl}) with $s=t=1$, 
we have $\bar{u} = \omega_{1}/n$ and
$\bar{v} = 0$.  Similarly, write $\omega_{2,0}(\sigma) = \sum_{j=1}^n u'_j v'_{\sigma_j}$ using (\ref{omegakl})
with $s=2$ and $t=0$, giving $\bar{u}' = r/n$ and $\bar{v}'=R$.  Applying Lemma~\ref{ranper}(ii) gives
\begin{align} 
 & \MYCov{\frac{\omega_{1,1}(\X)}{\lambda(1-\lambda)n}, 
  \frac{-r \, \omega_{2,0}(\X)}{2\lambda(1-\lambda)n^2}} \notag \\
    &=  -\frac{r}{2\lambda^2(1-\lambda)^2 n^3(n-1)}\,
    \sum_{j=1}^n (d_j-d)\((d_j-d)^2 - R\)\notag \\
         & \hspace*{3cm} \qquad \quad \times 
  \biggl(\,\sum_{j=1}^r \left(h_j-\lambda(r-1) - \dfrac{\omega_{1}}{n}\right)\left(1-\dfrac{r}{n}\right) + 
          \frac{\omega_{1}\, r(n-r)}{n^2}\biggr)\notag \\
    &=  -\frac{r\,  \omega_{1} \sum_{j=1}^n (d_j-d)^3}{2\lambda^2(1-\lambda)^2 n^4} + O(n^{-1/2+\eta}). \label{cov}
\end{align}

Finally, we need
\begin{equation}\label{needthis2}
 \MYCov{\frac{\omega_{1,1}(\X)}{\lambda(1-\lambda)n},\frac{\omega_{1,0}(\X)^2}{\lambda(1-\lambda) n^2}}
 = \frac{1}{\lambda^2(1-\lambda)^2 n^3}\,  \sum_{j=1}^r \sum_{k=1}^r
   \mYCov{\hat E_{jk}(\X),\omega_{1,1}(\X)},
\end{equation}
where $\hat E_{jk}(\sigma) = (d_{\sigma_j}-d)(d_{\sigma_k}-d)$.
If $j\neq k$ then each covariance in the 
sum on the right hand side matches the setting of Lemma~\ref{ranper}(v)
with $u_j=0$ for all $j$, and $v_j = d_j-d$. Note $\bar{u}=\bar{v}=0$.
Write $\Psi'(\X) = \omega_{1,1}(\X)$
using (\ref{omegakl}) with $s=t=1$, giving $\bar{u}' = \omega_{1}/n$ and $\bar{v}'=0$.
By Lemma~\ref{ranper}(iv), if $j\neq k$ then
\[ 
  \myCov{\hat E_{jk}(\X),\omega_{1,1}(\X)} = O\left(\frac{\delta^3 r }{n}\right)
\]
since $u(k) + \bar{v} = u(j) + \bar{v} = 0$ for all $j,k$.  
Therefore the terms in~\eqref{needthis2} with $j\ne k$ contribute
\[
 2\,\binom{r}{2}\, O\left(\frac{\delta^3 r}{\lambda^2 n^4}\right) = O(n^{-1/2+\eta}).
\]
The terms in~\eqref{needthis2} with $j=k$ contribute
\[ \frac{1}{\lambda^2(1-\lambda)^2 n^3}\, \myCov{\omega_{1,1}(\X),\, \omega_{2,0}(\X)}
  = O\left(\frac{\delta^3 \omega_{1} }{\lambda^2(1-\lambda)^2  n^3}\right) =  O(n^{-1/2+\eta}),
\]
using the earlier expression for this covariance \eqref{cov} and the bound $\omega_{1}=O(r^2)$. Thus we see that~\eqref{needthis2}
does not contribute significantly.

Combining all the estimates above (and multiplying (\ref{cov}) by 2) gives the stated
expression for the
variance of $f_{\rm ind}(\X)$.  This completes the proof.
\end{proof}

We can now prove our main result about induced subgraphs.

\begin{proof}[Proof of Theorem~\ref{inducedsubcount}]
Define $\eps_3(\eta)$ as in \eqref{def_eps3}.
We will apply Theorem~\ref{RanPermThm} to estimate (\ref{ind_start-point}). The expected value and variance of $f_{\rm ind}$ are given in
Lemma~\ref{inducedallvar}.  It remains to prove that 
\[ \sum_{j=1}^{n-1}\, \(\nfrac{1}{6} \alpha_j^3 + \nfrac{1}{3} \alpha_j \beta_j + \nfrac{5}{8} \alpha_j^4 + \nfrac{5}{8} \beta_j^2\)
  = O(n^{-1/2+\eta}),
\]
where $\alpha_{j} = \alpha_j[f_{\rm ind},S_n]$,  $\beta_j = \sum_{k=j+1}^{n-1} \alpha_k\varDelta_{jk}$ and $\varDelta_{jk} = \varDelta_{jk}[f_{\rm ind},S_n]$.

Without loss of generality, we can assume that
\[
   |h_1 - \lambda(r-1)| \geq |h_2 - \lambda(r-1)| \geq \cdots \geq |h_r - \lambda(r-1)|.
\]

For any $s,t$, and $1\leq j<a\leq n$, the function $\omega_{s,t}$ satisfies
\[
  \|D^{(ja)} \omega_{s,t}\| = 
  \begin{cases}
	   \,O( \delta^s |h_j - \lambda(r-1)|^t ), & \text{ for  } j \leq r;\\
	   \,0, &  \text{otherwise}.
	\end{cases}
\]
Also, we have 
\[
  \|D^{(ja)} \omega_{1,0}^2\|  \leq 2 \|\omega_{1,0}\|\|D^{(ja)} \omega_{1,0}\| =
	\begin{cases}
	   \,O( \delta^2 r ), & \text{ for  } j \leq r;\\
	   \,0, &  \text{otherwise}.
	\end{cases}
\]
Let $\alpha_j = \alpha_j[f_{\rm ind},S_n]$. 
Observe that $\alpha_j = 0$ for $j>r$.  By our assumptions, we have  
\[
   r,\delta, h_j = O(n^{1/2 + \eps}), \ \ \ \
	\omega_{1} = 2m - \lambda \binom{r}{2} =O(n^{1+2\eps}).
\]
Thus, using the above bounds, we find that $\alpha_j = O(\gamma_j)$ for $1\leq j\leq r$, where
 \[
    \gamma_j 
     =     
	   \frac{\delta |h_j - \lambda(r-1)|}{\lambda(1-\lambda) n} +  n^{-1/2+4\eps}.  
\]
Note that for any $s,t$, and
distinct $1\leq j,k,a,b\leq n$ with $j<a$ and $j<k<b$, we have 
\begin{align*}
  \|D^{(k\, b)}D^{(j\, a)} \omega_{s,t}\| &= 0,
 \\
  \|D^{(k\, b)} D^{(j\, a)} \omega_{1,0}^2\|  &=  
	\begin{cases}
	   \,O( \delta^2  ), & \text{ for  } k \leq r;\\
	   \,0, &  \text{otherwise}.
	\end{cases}
\end{align*}
Let $\varDelta_{jk}=  \varDelta_{jk} [f_{\rm ind},S_n]$. 
We have that $\varDelta_{jk} = 0$ for $k> r$.
Observe also that
\begin{align*}
   \|D^{(k\, a)} D^{(j\, a)} f_{\rm ind}\| &\leq 2\|D^{(k\, a)} f_{\rm ind}\| = O(n^{3\eps}), \\ 
   \|D^{(k\, b)} D^{(j\, k)} f_{\rm ind}\| &\leq 2\|D^{(j\, k)} f_{\rm ind}\| = O(n^{3\eps}).
\end{align*}
 Thus, using the bounds above, we find that $ \varDelta_{jk}  =  O(n^{-1+3\eps})$  for $1 \leq j <k\leq r$.
	
    Using the inequality  $(|x|+|y|)^3 \leq 4(|x|^3 + |y|^3)$ for each term of the sum, we find that
    \begin{align*}
      \sum_{j=1}^{n-1} \alpha_j^3  &= O(1) \sum_{j=1}^{r} \gamma_j^3= 
         \sum_{j=1}^r  O\biggl( \biggl( \frac{\delta |h_j - \lambda(r-1)|}{\lambda(1-\lambda) n} 
         + n^{-1/2+4\eps}\biggr)^{\!3}\,\biggr) \\ &=
O\biggl(   \frac{\delta^3 \sum_{j=1}^r |h_j - \lambda(r-1)|^3}{\lambda^3(1-\lambda)^3 n^3} 
  +rn^{-3/2+12\eps}\biggr) = O(n^{-1/2+\eta})
\end{align*}
   by \eqref{induced-assumption} and the bound $\eps\leq \frac18\eta$.  
  Observe that  $\beta_j = 0$ for $j>r$ and
  \[
    \beta_j  =  O(r \alpha_j n^{-1+3\eps}) =  O(\gamma_j\, n^{-1/2+4\eps})  \ \ \ \text{ for } j\leq r.
  \]
  Using the power mean inequality, we bound
 \begin{align*}
  	\sum_{j=1}^n \alpha_j \beta_j &= O(n^{-1/2+4\eps}) \sum_{j=1}^{r} \gamma_j^2 
  	 \leq   O(n^{-1/2+4\eps}) r^{1/3} \biggl(\sum_{j=1}^{r} \gamma_j^3\biggr)^{\!\!2/3}\\
  	  &=O(n^{-1/3+4\eps + \eps/3})\biggl(\sum_{j=1}^{r} \gamma_j^3\biggr)^{\!\!2/3}
	     = O(n^{-1/2+\eta})
  \end{align*}
 as before.
   The two remaining terms also have negligible contribution:
  \begin{align*} 
  \sum_{j=1}^{n-1} \alpha_j^4 &\leq   \sum_{j=1}^{r} \gamma_j^4 = O(1)\sum_{j=1}^{r} \gamma_j^3 = O(n^{-1/2+\eta}), \\
    \sum_{j=1}^{n-1} \beta_j^2  &=  O(n^{-1+8\eps})\sum_{j=1}^{r} \gamma_j^2 = O(n^{-1/2+\eta}).
  \end{align*}
  
  Applying  Theorem~\ref{RanPermThm} and using Lemma \ref{inducedallvar}, we complete the proof.
  The bound $\varLambda_2=O(n^{-1/3+4 \eps+\eta/3})$
  in the theorem statement follows directly from~\eqref{induced-assumption} and \eqref{bounds_PM}.
 \end{proof}
 

\begin{proof}[Proof of Corollary \ref{induced-corollary}]
 To show \eqref{induced-1}, observe that $\omega_t = O(r^{t+1})$. Therefore,
 the assumption $r^2(1+\delta^2/n) = O\(\lambda^2(1-\lambda)^2 n^{1/2+\eta}\)$implies  that
  \begin{align*}
       \frac{r^2}{2n} + \frac{(1-2\lambda) \omega_1}{ 2 \lambda(1-\lambda) n} - \frac{r^2 R}{ 2 \lambda (1-\lambda)n^2 }
       -  \frac{(1-2\lambda) R \omega_1}{ 2 \lambda^2 (1-\lambda)^2 n^2 } & = O\left(  \frac{r^2(1+\delta^2/n)}{\lambda^2 (1-\lambda)^2 n} \right) = O(n^{-1/2+\eta}),\\
  \frac{ \omega_1^2}{ 4 \lambda(1-\lambda) n^2} +       \frac{r \omega_2}{ 2 \lambda (1-\lambda)n^2 }
  + \frac{(1-2\lambda)\omega_3}{ 6\lambda^2(1-\lambda)^2 n^2}& = O\left(  \frac{r^4}{\lambda^2 (1-\lambda) ^2n^2} \right) = O(n^{-1/2+\eta}),\\
       \frac{r \omega_1 \sum_{j=1}^n (d_j-d)^3}{ 2 \lambda^2 (1-\lambda)^2 n^4 }&
       = O \left( \frac{r^3 \delta^3}{\lambda^2 (1-\lambda)^2 n^3}\right)  = O(n^{-1/2+\eta}). 
  \end{align*}
  
  For the second statement,  observe that the assumption $r = O(n^{1/3-\eps})$ implies
  that $r^2(1+\delta^2/n) = O(n^{2/3})$ and so
  \begin{align*}
    -\frac{\omega_2}{ 2\lambda(1-\lambda) n}  
    &+ \frac{R\omega_2}{ 2\lambda^2(1-\lambda)^2 n^2} = 
    O\biggl(\frac{r^3(1+\delta^2/n)}{ \lambda^2(1-\lambda)^2 n}\biggr) \\
    &= O\biggl(\frac{r}{n^{1/3}\lambda^2(1-\lambda)^2}\biggr)
     = O\biggl(\frac{n^{-\eps}}{\lambda^2(1-\lambda)^2}\biggr)
     = O(n^{-\eps/2}).
  \end{align*}
  Applying \eqref{induced-1} completes the proof.
\end{proof}

\begin{proof}[Proof of Corollary \ref{c:concentration}]
The bound on $r$ in the corollary statement implies that $r=O(\log n)$
and is equivalent to $\lambdamin^r\geq n^{-2+\eps}$.
The fact that $\E Y_n\to\infty$ thus follows from
Corollary~\ref{induced-corollary}.  In order to prove the concentration,
we use the second moment method in a standard fashion.  Define
\[ N = \binom nr \frac{r!}{\card{\Aut(H^{[r]})}}, \]
and let $H_1,\ldots,H_N$ be a list of all the potential induced copies of~$H^{[r]}$.
Let $p_{j,k}$ be the probability that both $H_j$ and $H_k$ occur simultaneously
as induced subgraphs and define
\[
  E_t = \sum_{\substack{1\leq j,k\leq N\\[0.1ex] \card{V(H_j)\cap V(H_k)}=t}} 
  \negthickspace p_{j,k},
  \qquad (0\leq t\leq r).
\]
We know that $\E Y_n^2 = \sum_{t=0}^r E_t$ and now we compare
$\E Y_n^2$ to~$(\E Y_n)^2$.
The probability $p_{j,k}$ is not provided directly by either Theorem~\ref{subcount}
or Theorem~\ref{inducedsubcount} but we can infer it from the second part
of Corollary~\ref{induced-corollary}.  By summing over all the possible subgraphs
induced by $V(H_j)\cup V(H_k)$, we find that $E_t$ asymptotically matches
the corresponding expectation for the binomial random graph model~$\calG(n,\lambda)$
to relative error $O(n^{-\eps/2}+n^{-1/2+\eta})$.
Therefore, we have
\begin{align*}
    E_0 &=   \binom nr \binom {n-r}r
    \biggl(\frac{r!}{\card{\Aut(H^{[r]})}}\biggr)^{\!\!2} \lambda^{2m}(1-\lambda)^{r(r-1)-2m}
                 \(1+O(n^{-\eps/2}+n^{-1/2+\eta})\) \\
    &= (\E Y_n)^2 \(1+O(n^{-\eps/2}+n^{-1/2+\eta})\).
\end{align*}
To bound $E_t$ from above for $t\geq 1$, we can assume that two induced copies of
$H^{[r]}$ always overlap correctly.  This gives
\begin{align*}
    E_t &\leq \binom nr \binom rt \binom {n-r}{r-t}
                 \biggl(\frac{r!}{\card{\Aut(H^{[r]})}}\biggr)^{\!\!2} 
             \lambda^{2m}(1-\lambda)^{r(r-1)-2m} \lambdamin^{-\binom t2} \\
           &\leq (\E Y_n)^2 \( 2 n^{-1} r^2 \lambdamin^{-(t-1)/2} \)^t (1+o(1)).
\end{align*}
Using the condition $\lambdamin^r\geq n^{-2+\eps}$, we have that
$\(n^{-1} 2 r^2 \lambdamin^{-(t-1)/2}\)^t=O(n^{-1/2+\eta})$ for $t=1$ and
$\(n^{-1} 2 r^2 \lambdamin^{-(t-1)/2}\)^t=O(n^{-t\eps/3})$ for $2\leq t\leq r$.
Therefore,
\[ \E Y_n^2 = (\E Y_n)^2(1 + O(n^{-\eps/2}+n^{-1/2+\eta})), \]
which implies that
\[ \Var Y_n = (\E Y_n)^2 \, O(n^{-\eps/2}+n^{-1/2+\eta}). \]
The desired result now follows from Chebyshev's Inequality. 
\end{proof}




\nicebreak

\end{document}